\newtheorem{definition}{Definition}[section]
\newtheorem{lemma}[definition]{Lemma}
\newtheorem{theorem}[definition]{Theorem}
\newtheorem{proposition}[definition]{Proposition}
\newtheorem{remark}[definition]{Remark}
\newtheorem{example}[definition]{Example}
\newtheorem{examples}[definition]{Examples}
\def\emph#1{{\bfseries\itshape{#1}}}
\def\R{\mathbb{R}}               %Numeros reales
\def\lcf{\lbrack\! \lbrack}
\def\rcf{\rbrack\! \rbrack}
\newcommand{\ltilde}[3][0]{\altura=0 \advance\altura by #1 \ancho=#2 \anchom=\ancho \divide\anchom by 2
\anchoa=\ancho \divide\anchoa by 4 \anchob=\anchom \advance\anchob by \anchoa \kern-3pt \begin{array}[b]{c} \begin{picture}(1,1)(\anchom,-\altura)
\qbezier(0,2)(\anchoa,5)(\anchom,2) \qbezier(\anchom,2)(\anchob,-1)(\ancho,4) \qbezier(0,2)(\anchoa,4.5)(\anchom,1.8) \qbezier(\anchom,1.8)(\anchob,-1.5)(\ancho,4)
\end{picture} \\[-4pt]{#3} \end{array} \kern-4pt	}
\newcommand{\pl}{(J^T)^{-1} (0,\mu)}
\newcommand{\g}{\mathfrak{g}}
\begin{document}

\title[Reduction of a  symplectic-like Lie algebroid with momentum map]{Reduction of a  symplectic-like Lie algebroid \\
with momentum map and its application to fiberwise linear Poisson structures}

\author{Juan Carlos Marrero}
\address{J.\ C.\ Marrero:
Unidad asociada ULL-CSIC, Geometr{\'\i}a Diferencial y Mec\'anica Geom\'etrica, Departamento de Matem\'atica Fundamental, Facultad de
Matem\'aticas, Universidad de la Laguna, La Laguna, Tenerife,
Canary Islands, Spain} \email{jcmarrer@ull.es}

\author{Edith Padr{\'o}n}
\address{E.\ Padr{\'o}n: Unidad asociada ULL-CSIC, Geometr{\'\i}a Diferencial y Mec\'anica Geom\'etrica,Departamento de Matem\'atica Fundamental, Facultad de
Matem\'aticas, Universidad de la Laguna, La Laguna, Tenerife,
Canary Islands, Spain} \email{mepadron@ull.es}

\author{Miguel Rodr{\'\i}guez-Olmos}
\address{M. Rodr{\'\i}guez-Olmos: Departamento de Matem\'atica Aplicada IV, 
Universidad Polit\'ecnica de Catalu\~na, 
Barcelona, Spain. }
\email{miguel.rodriguez.olmos@upc.edu}

\begin{abstract}

\end{abstract}

\noindent\thanks{\noindent {\it Mathematics Subject Classification} (2010): 53D17, 53D20, 37J15,  53D05}

\thanks{\noindent The authors have been
partially supported by MEC (Spain) grants  MTM2009-13383 and 
MTM2009-08166-E. The research of M.R-O has been also partially supported by a Marie
Curie Intra European Fellowship PIEF-GA-2008-220239 and a Marie Curie
Reintegration Grant PERG-GA-2010-27697. The research of J.C.-M and E-P has been also partially supported by the grants of the Canary government
SOLSUBC200801000238 and ProID20100210. We also would like to thank  D. Iglesias, D. Mart{\'\i}n de Diego and E. Mart{\'i}nez for their useful comments. }

\noindent\keywords{Reduction, Lie algebroids, symplectic-like Lie algebroids, momentum maps}

\begin{abstract}
This article addresses the problem of developing an extension of
the Marsden-Weinstein reduction process to symplectic-like Lie
algebroids, and in particular to the case of the canonical cover of a fiberwise linear Poisson structure, whose reduction process is the analogue to cotangent bundle reduction in the context of Lie algebroids.
\vspace{1cm}
\begin{center}
{\it Dedicated to the memory of Jerrold E. Marsden}
\end{center}
\end{abstract}

\maketitle

\setcounter{section}{0}

\section{Introduction}

\subsection{Preliminaries}
A smooth and proper action of a Lie group $G$ on a symplectic
manifold $(M,\Omega)$ is called Hamiltonian if $G$ acts by
symplectomorphisms and it admits a coadjoint equivariant momentum
map $J:M\rightarrow\g^*$ satisfying the compatibility condition
$$\Omega(\xi_M,\cdot)=d\langle
J(\cdot),\xi\rangle\quad\forall\xi\in\g,$$ where $\xi_M\in
\mathfrak{X}(M)$ is the fundamental vector field corresponding to
the Lie algebra element $\xi$.  The Marsden-Weinstein symplectic
reduction process, introduced in \cite{MaWe74} states that, if $\mu$
is a regular value of $J$, and $G_\mu$, the stabilizer of $\mu$ for
the coadjoint representation, acts freely and properly on
$J^{-1}(\mu)$, then the quotient $J^{-1}(\mu)/G_\mu$ is a smooth
manifold with a naturally induced ``reduced'' symplectic form
$\Omega_\mu$. The identity characterizing $\Omega_\mu$ is
$$\iota_\mu^*\Omega=\pi_\mu^*\Omega_\mu,$$
where $\iota_\mu:J^{-1}(\mu)\hookrightarrow M$ and
$\pi_\mu:J^{-1}(\mu)\rightarrow J^{-1}(\mu)/G_\mu$ are the natural
inclusion and projection, respectively.

One can look at the particular and important case when our
symplectic structure is not on $M$, but on its cotangent bundle
$T^*M$, and the action of $G$ is the cotangent lift of an action on
$M$. In this case, the lifted action is automatically Hamiltonian
with respect to the canonical symplectic form on $T^*M$ given in
trivializing local coordinates by $\Omega_c=dx^i\wedge dy^i$. It can be
shown that an equivariant momentum map for this action is given by
\begin{equation}\label{cot momentum} J(\alpha_x)(\xi)=
\alpha_x(\xi_M(x))\quad\text{for all}\quad \alpha_x\in
T^*_xM,\,\xi\in\g.\end{equation}

The reduction theory for lifted actions on cotangent bundles was
first studied in \cite{Sat77}, where only the case of Abelian
actions was addressed. The general case was treated in \cite{Ku81}
and \cite{AbMa87}. The set of  results emerging from those and other
references is usually known as cotangent bundle reduction. We will
expose here the basic lines of this subject and refer to \cite{Stages,OR}
for a more detailed survey. We will assume from now on that the
action of $G$ on $M$ is free and proper.

For the case of lifted actions, due to the particularities of the
fibered  geometry existent, we can distinguish different situations for
the choice of momentum value. This cases are 1) $\mu=0$, 2)
$G_\mu=G$ and 3) general values of $J$. The theory of cotangent
bundle reduction establishes the existence of  maps from the
abstract symplectic reduced spaces to certain cotangent bundles
equipped with canonical symplectic forms possibly deformed by  a  {\it  magnetic
term.} The different possibilities are:

\noindent$\bullet$ $\mu=0$. There is a symplectomorphism
$$\phi_0:(J^{-1}(0)/G, \Omega_0)\rightarrow (T^*(M/G),\Omega_c).$$

\noindent$\bullet$ $G_\mu=G$. There is a symplectomorphism
$$\phi_\mu:(J^{-1}(\mu)/G_\mu,\Omega_\mu)\rightarrow
(T^*(M/G_\mu),\Omega_c-B_\mu).$$

\noindent$\bullet$ General $\mu$. There is a symplectic embedding
$$\phi_\mu:(J^{-1}(\mu)/G_\mu,\Omega_\mu)\rightarrow
(T^*(M/G_\mu),\Omega_c-B_\mu).$$

In the last two cases the {\it magnetic term } $B_\mu$ is the pullback
by the cotangent bundle projection of a  closed two-form on
$M/G_\mu$. This two-form is obtained, for example,  via the choice of a principal
connection for the fibration $M\rightarrow M/G_\mu$.

Note also that in the case $G_\mu=G$ (which corresponds to values of
$J$ for which their  coadjoint orbits are trivial) we have
$M/G_\mu=M/G$. Therefore, topologically, all the reduced spaces for
momentum values $\mu$ with trivial coadjoint orbits are equivalent
to the same space $T^*(M/G)$, and their symplectic forms differ only
possibly in the terms $B_\mu$.

This paper develops a generalization of the reduction theory
reviewed above for general symplectic manifolds  and the particular
case of cotangent bundles, to the setup of Lie algebroids. For general
Marsden-Weinstein reduction, this happens when one substitutes the
tangent bundle $TM$ of a symplectic manifold $M$ by a more general
symplectic vector bundle $A$ over $M$ (a symplectic-like Lie algebroid).
For cotangent bundle reduction, the generalization consists in
substituting $TM$ by a general Lie algebroid $A$, and $T(T^*M)$ by a
special construction called the canonical cover (or the prolongation of $A$ over $A^*$ following the terminology of \cite{LMM}) of $A^*$, which happens
to be a symplectic-like Lie algebroid. The latter generalization is a
particular case of the former, and both cases coincide with
Marsden-Weinstein reduction and cotangent bundle reduction,
respectively, when $A$ is just the tangent bundle of $M$. In the
remainder of this section we will give an overview of the new
results of this article.

\subsection{Reduction for  Lie algebroids}
A Lie algebroid is a natural generalization of the tangent bundle to a
manifold. It consists of a vector bundle $A\rightarrow M$ equipped
with a certain geometric structure that allows to generalize on the one
hand, the Lie algebra of vector fields on $M$ to a Lie algebra
structure $\lcf\cdot,\cdot\rcf$ on the space of sections of $A$, and
on the other, the exterior derivative on differential forms to the
a derivation $d^A$ of the exterior algebra of multi-sections of $A^*$. The
general theory of Lie algebroids is reviewed in Section
\ref{section2}. We remark that giving  a Lie algebroid structure on vector bundle $A$ is equivalent to giving  a linear Poisson bivector on  the dual vector bundle $A^*$ of $A$.

In order to study the reduction process for a Lie algebroid $A\to M$ we introduce in Subsection \ref{section3.1} the notion of an action by complete lifts on $A$ as an action $\Phi:G\times A\to A$ of a Lie group $G$  by vector bundle automorphisms of a Lie group $G$ on $A$ together with a Lie algebra anti-morphism $\psi: {\mathfrak g}\to \Gamma(A)$ such that the infinitesimal generator of $\xi\in {\mathfrak g}$ with respect to $\Phi$ is just the complete lift of $\psi(\xi);$ or equivalently,  an action $\Phi:G\times A^*\to A^*$ on the dual vector bundle $A^*$ by Poisson automorphisms such that the infinitesimal  generator of $\xi$ is just the Hamiltonian vector field (with respect to the linear Poisson structure on $A^*$) of the linear function associated with the section $\psi(\xi)\in \Gamma(A).$ The standard example of an action by complete lifts  on the Lie algebroid $TM$ is the tangent lift of an action on $M$.

If $\Phi:G\times A\to A$ is a free and proper action of a connected Lie group $G$ on the Lie algebroid $A$ by complete lifts then in  Section \ref{section reduction liealg} we construct  an affine action $\Phi^T:TG\times A\to A$ of the tangent Lie group $TG$ such that the orbit space $A/TG$ is  a Lie algebroid over the reduced manifold $M/G$ corresponding to the induced action $\phi:G\times M\to M$  of $G$ on the base manifold $M$ of the Lie algebroid $A$. Moreover, we prove that the projection $\widetilde{\pi}:A\to A/TG$ is a Lie algebroid morphism (see Theorem \ref{reduction}).

\subsection{Reduction for symplectic-like Lie algebroids}
The main idea behind the generalization of
symplectic reduction to Lie algebroids consists in realizing that a
symplectic manifold can be seen as a Lie algebroid endowed with a
symplectic vector space structure on each fiber varying smoothly.
Under this point of view, the Lie algebroid is nothing but the
tangent bundle of the symplectic manifold, and the symplectic
structure on the fibers is the evaluation  of the symplectic form to
each point. The fact that the symplectic form is closed can then be
interpreted as being closed as a differential two-form on the Lie
algebroid. This situation can be extended to an arbitrary Lie
algebroid, not necessarily the tangent bundle of a symplectic manifold.
Therefore, the setup for this paper will be a symplectic-like Lie
algebroid, i.e. a Lie algebroid $A\rightarrow M$ equipped with a
non-degenerate smooth  2-section $\Omega\in\Gamma(\wedge^2A^*)$
satisfying $d^A\Omega=0$ and an action $\Phi:G\times A\to A$ of a Lie group $G$  by complete lifts on $A$. 

The main result of Subsection \ref{section3.2} is to obtain a Lie algebroid version of the Marsden-Weinstein reduction for symplectic manifolds. Firstly, we will consider a momentum map $J:M\to {\mathfrak g}$ for the action $\phi:G\times M\to M$ which allows to define an equivariant map $J^T:A\to {\mathfrak g}^*\times {\mathfrak g}^*$ for  the affine action $\Phi^T:TG\times A\to A.$ Then, in Theorem \ref{2.8co}, we describe the Lie
algebroid analogue of the Marsden-Weinstein reduction scheme. It
states that under a regularity condition involving a value $\mu\in {\mathfrak g}^*$ of
$J$, the quotient $A_\mu:=({J^T})^{-1}(0,\mu)/TG_\mu$ is a symplectic-like
Lie algebroid over $J^{-1}(\mu)/G_\mu$.  If $\Omega$ is the
symplectic-like section on $A$ and
$\widetilde{\pi}_\mu:({J^T})^{-1}(0,\mu)\rightarrow A_\mu$ and
$\widetilde{\iota}_\mu:({J^T})^{-1}(0,\mu)\rightarrow A$ are the
canonical projection and inclusion respectively, then the reduced
symplectic-like section $\Omega_\mu$ on $A_\mu$ is characterized by the
condition
$$\widetilde{\pi}_\mu^*\Omega_\mu=\widetilde{\iota}_\mu^*\Omega.$$

It is well-known that the base manifold of a symplectic-like Lie algebroid has an induced
Poisson structure (see \cite{LMM, K, MX}). Then, as a consequence of the reduction theorem for
symplectic-like Lie algebroids, it is shown in Theorem \ref{T2.10} that
the Poisson structures on the base manifolds of the original and
reduced symplectic-like Lie algebroids are related in a similar way. Namely,
if $\{\cdot,\cdot\}$ denotes the Poisson structure on $M$ induced by
$\Omega$ and $\{\cdot,\cdot\}_\mu$ is the corresponding structure on $J^{-1}(\mu)/G_\mu$ 
induced by the reduced symplectic-like section $\Omega_\mu$, then

$$\{ \tilde{f},\tilde{g}\}_\mu\circ \pi_\mu=\{f,g\}\circ i_\mu,$$

where $\pi_\mu:J^{-1}(\mu)\rightarrow J^{-1}(\mu)/G_\mu$ and
$\iota_\mu:J^{-1}(\mu)\rightarrow M$ are the canonical projection
and the  inclusion, respectively, $\tilde{f}, \tilde{g}$ are functions on $J^{-1}(\mu)/G_\mu$  and $f,g$ are $G$-invariant  extensions to $M$ of ${\tilde
f}\circ \pi_\mu$ and ${\tilde g}\circ \pi_\mu$, respectively. That is, the reduction obtained on the base manifold of the Lie algebroid is just the Marsden-Ratiu reduction for Poisson manifolds \cite{MR}.

In \cite{BCG} a theory of reduction for Courant algebroids is presented. A symplectic-like Lie algebroid $A$ induces a Lie bialgebroid and therefore a Courant algebroid on $A\oplus A^*$ (see \cite{LWP}). Then,  one may apply this Courant reduction process  to  $A\oplus A^*$ and could recover, after a long computation,  some results described in Section \ref{section3.2}. However, we focus our study in the reduction of the particular case of symplectic-like Lie algebroids which allows us to obtain more explicit results on this type of reduction.

\subsection{Reduction for canonical covers of fiberwise linear Poisson structures}

Section \ref{section prolongued} studies, within the framework of
symplectic-like Lie algebroids, the situation equivalent to cotangent
bundle reduction. In this case the generalization goes as follows:
First, the cotangent bundle over a manifold $M$ is replaced by
$A^*$, the dual of a Lie algebroid $A\rightarrow M$, and then we
consider the canonical cover of $A^*$, (also known as the prolongation of $A$ over $A^*$), denoted by  ${\mathcal T}^AA^*$.
This is a natural construction on the dual of a Lie algebroid, which
happens to be in a canonical way, a symplectic-like Lie algebroid with base manifold
$A^*$. If $A$ is the tangent bundle of $M$, then ${\mathcal T}^AA^*$
is just $T(T^*M)$. If there is a suitable action of a Lie group $G$ by complete lifts on $A$, this action can be further lifted to the
canonical cover of $A^*,$ in a natural way, and this lifted action
happens to be a morphism of symplectic-like Lie algebroids. Furthemore, one may define an equivariant momentum map on $A^*$ (the base space of ${\mathcal T}^AA^*$) in a similar way when as how the classical momentum map \eqref{cot momentum} on $T^*M$ is introduced.   In general it is not possible to find an equivariant momentum map for a Poisson action (see, for instance, \cite{FRO}). However, for the case of the Poisson action $\Phi^*: G\times A^*\to A^*$ associated with an action $\Phi:G\times A\to A$ by complete lifts, an equivariant momentum map is described.

Applying the reduction
theory of symplectic-like Lie algebroids just developed we know that the reduction of
${\mathcal T}^AA^*$ at any momentum value is again a symplectic-like Lie
algebroid. However, as in the situation of cotangent bundle
reduction it is expected that the extra properties of the symplectic-like
Lie algebroid, in this case the prolonged fibered structure, will be
recovered in the quotient in some way. This is the content of the
results of Section 5, for which the obtained new results reduce to  the
standard cotangent bundle reduction theory in the case that the
starting Lie algebroid $A\rightarrow M$ is the standard Lie algebroid $TM$. In Subsection
\ref{prol1} it is shown (Theorem \ref{p1}) that if $\mu=0$, there is a symplectic-like Lie
algebroid isomorphism between the reduced symplectic-like Lie algebroid
$({J^T})^{-1}(0,0)/TG$ and ${\mathcal T}^{A_0}A_0^*$. Here
$A_0\rightarrow M/G$ is a Lie algebroid with total space $A/TG.$
 The case $G_\mu=G$ is studied in Subsection \ref{prol2}. There
it is shown that $({J^T})^{-1}(0,\mu)/TG_\mu$ is also isomorphic to
${\mathcal T}^{A_0}A_0^*$, but in this case this isomorphism is canonical between the symplectic-like Lie algebroids  if the canonical symplectic-like section on ${\mathcal
T}^{A_0}A_0^*$ is modified by the addition of a twisting term which
consists in the lift to ${\mathcal T}^{A_0}A_0^*$ of a closed
2-section of $A_0^*$. This is the content of Theorem \ref{p2}.
Finally, Subsection \ref{prol3}, in its main result, Theorem \ref{t5.3} shows that for the most
general momentum values,  $({J^T})^{-1}(0,\mu)/TG_\mu$ is
canonically embedded as a Lie subalgebroid of ${\mathcal
T}^{A_{0,\mu}}A_{0,\mu}^*$, where $A_{0,\mu}$ is a Lie algebroid $A/TG_\mu$ over $M/G_\mu$, and the prologation ${\mathcal
T}^{A_{0,\mu}}A_{0,\mu}^*$ is equipped with its canonical symplectic-like section
minus a magnetic term, just as in the $G_\mu=G$ case.  

As far as we know there is a similar research being done independently by E. Mart{\'\i}nez \cite{Mar2}. In addition, in the same direction, some similar results in the more general setting of Lie bialgebroids has been discussed in \cite{Ro}.

\section{Lie algebroids}
\label{section2}
 Let $A$ be a vector bundle of rank $n$
over the manifold $M$ of dimension $m$ and let $\tau:A\rightarrow M$
be its vector bundle projection. Denote by $\Gamma(A)$ the
$C^{\infty}(M)$-module of sections of $\tau:A\rightarrow M$. A
{\it Lie algebroid structure} $(\lcf\cdot,\cdot\rcf,\rho)$ on $A$
is a Lie bracket $\lcf\cdot,\cdot\rcf$ on the space $\Gamma(A)$
and a bundle map $\rho:A\rightarrow TM$, called {\it the anchor
map}, such that, if we also denote by
$\rho:\Gamma(A)\rightarrow\frak{X}(M)$ the homomorphism of
$C^{\infty}(M)$-modules induced by the anchor map satisfying 
\begin{equation}\label{IL}
\lcf X,fY\rcf=f\lcf X,Y\rcf+\rho(X)(f)Y, \mbox{ for }
X,Y\in\Gamma(A) \mbox{ and } f\in C^{\infty}(M). \end{equation}
The triple $(A,\lcf\cdot,\cdot\rcf,\rho)$ is called a {\it Lie
algebroid over $M$} (see \cite{Ma}). In such a case, the anchor
map $\rho:\Gamma(A)\rightarrow\frak{X}(M)$ is a homomorphism
between the Lie algebras $(\Gamma(A),\lcf\cdot,\cdot\rcf)$ and
$(\frak{X}(M),[\cdot,\cdot])$.

If $(A,\lcf\cdot,\cdot\rcf,\rho)$ is a Lie algebroid, one can
define a cohomology operator, which is called {\it the
differential of $A$},
$d^A:\Gamma(\wedge^kA^*)\longrightarrow\Gamma(\wedge^{k+1}A^*)$,
as follows
\begin{equation}\label{diferencial}
\begin{array}{lcl}
(d^A\mu)(X_0,\dots,X_k)&=&\displaystyle\sum_{i=0}^k(-1)^i\rho(X_i)(\mu(X_0,\dots,\widehat{X_i},\dots,X_k))\\
 &+&\displaystyle\sum_{i<j}(-1)^{i+j}\mu(\lcf
 X_i,X_j\rcf,X_0,\dots,\widehat{X_i},\dots,\widehat{X_j},\dots,X_k),
\end{array}
\end{equation}
for $\mu\in\Gamma(\wedge^k A^*)$ and $X_0,\dots,X_k\in\Gamma(A)$.
Moreover, if $X\in\Gamma(A)$ one may introduce, in a natural way,
{\it the Lie derivate  for multisections of $A^*$ with respect to $X$,} as the operator
${\mathcal L}_X^A:\Gamma(\wedge^k
A^*)\longrightarrow\Gamma(\wedge^{k} A^*)$ given by ${\mathcal
L}_X^A=i_X\circ d^A+d^A\circ i_X$ (see \cite{Ma}). {\it The Lie derivative of a multisection $P\in 
\Gamma(\wedge^k A)$ of $A$ with respect to $X$ } is the $k$-section ${\mathcal L}^A_XP$ on $A$ characterized by  
$${\mathcal L}^A_XP(\alpha_1, \dots,\alpha_k)=\rho(X)(P(\alpha_1,\dots ,\alpha_k))-\sum_{i}P(\alpha_1,\dots, {\mathcal L}^A_X\alpha_i,\dots ,\alpha_k)$$
with $\alpha_i\in \Gamma(A^*).$

If $(A,\lcf \cdot,\cdot \rcf, \rho )$ is a Lie algebroid, we have a natural  linear Poisson structure $\Pi _{A^*}$ on the dual
vector bundle $A^*$ characterized as follows:
\begin{equation}\label{P-lineal}
\begin{array}{l}
\{ \widehat{X},\widehat{Y}\}_{\Pi _{A^*}} =-\widehat{\lcf X ,Y
\rcf},\\[9pt] \{\widehat{X},f_M\circ \tau _\ast \}_{\Pi_{A^*}}=
- \rho (X)(f_M)\circ \tau _\ast   ,\\[9pt]
\{f_M\circ \tau _\ast ,h_M\circ \tau _\ast \}_{\Pi_{A^*}}=0,
\end{array}
\end{equation}
for $X,Y \in \Gamma (A)$ and $f_M,h_M\in C^\infty (M)$, $\tau
_\ast :A^\ast \to M$ being the canonical projection. Here,
$\widehat{X}$ and  $\widehat{Y}$ denote the linear functions on
$A^*$ induced by the sections $X$ and $Y$, respectively.
Conversely, if $A^*$ is endowed with a linear Poisson structure
$\Pi _{A^*}$, then it induces a Lie algebroid structure on $A$
characterized by \eqref{P-lineal} (see
\cite{Courant}).

\medskip
Now, suppose that $(A,\lcf\cdot,\cdot\rcf,\rho)$ and
$(A',\lcf\cdot,\cdot\rcf',\rho')$ are Lie algebroids over $M$ and
$M'$, respectively, and that $F:A\to A'$ is a vector bundle
morphism over the map $f:M\to M'.$ Then, $F$ is said to be a {\it
Lie algebroid morphism} if
\begin{equation}\label{dAF}
d^{A}(F^*\alpha')=F^*(d^{A'}\alpha'),\mbox{ for
}\alpha'\in\Gamma(\wedge^k(A')^*) \mbox{ and for all } k.
\end{equation}
Here $F^*\alpha'$ denotes  the section of the vector bundle
$\wedge^k A^*\rightarrow M$ defined by
\begin{equation}
(F^*\alpha')_{x}(a_1,\dots,a_k)=\alpha'_{f(x)}(F(a_1),\dots,F(a_k)),
\end{equation}
for $x\in M$ and $a_1,\dots,a_k\in A_{x}$. 

If $F:A\to A'$ is  a vector bundle isomorphism over a diffeomorphism $f:M\to M'$ then  the dual isomorphism $F^*:(A')^*\to A^*$  over $f^{-1}: M'\to M$ is defined as follows 
$$[F^*(\alpha'_{x'})](a_{f^{-1}(x')})=\alpha'_{x'}(F(a_{f^{-1}(x')})),$$
for $x'\in M',$ $\alpha'_{x'}\in (A')^*_{x'}$ and $a_{f^{-1}(x')}\in A_{f^{-1}(x')}.$ 

Moreover, we have that $F$ is a Lie algebroid isomorphism if and only if $F^*$ is a Poisson isomorphism, that is, 
\[
\{f'\circ F^*, g'\circ F^*\}_{\Pi_{A^*}}=\{f',g'\}_{\Pi_{(A')^*}}\circ F^*, \mbox{ for } f',g'\in C^\infty((A')^*).
\]
If $F$ is 
a Lie
algebroid morphism, $f$ is an injective immersion and
$F_{|A_x}:A_x\rightarrow A'_{f(x)}$ is injective, for all $x\in
M$, then $(A,\lcf\cdot,\cdot\rcf,\rho)$ is said to be a {\it Lie
subalgebroid} of $(A',\lcf\cdot,\cdot\rcf',\rho')$.

Let $\widetilde{\pi}:A\to A'$ be an epimorphism of vector bundles
over $\pi:M\to M'$, i.e. $\pi$ is a submersion and for
each $x\in M,$ $\widetilde{\pi}_x:A_x\to A'_{\pi(x)}$ is an
epimorphism of vector spaces. If $X:M\to A$ is a section of $A$,
we said that $X$ is {\it $\widetilde{\pi}$-projectable } if there
is $X'\in \Gamma(A')$ such that the following diagram is
commutative

\begin{picture}(375,60)(60,40)
\put(210,20){\makebox(0,0){$M$}} \put(260,25){$\pi$}
\put(215,20){\vector(1,0){90}} \put(315,20){\makebox(0,0){$M'$}}
\put(190,50){$X$} \put(210,30){\vector(0,1){40}}
\put(320,50){$X'$} \put(310,30){\vector(0,1){40}}
\put(210,80){\makebox(0,0){$A$}} \put(260,85){$\widetilde{\pi}$}
\put(215,80){\vector(1,0){90}} \put(315,80){\makebox(0,0){$A'$}}
\end{picture}

\vspace{30pt}

In the next proposition we will describe the
necessary and sufficient conditions to obtain a Lie algebroid
structure on $A'$ such that $\widetilde{\pi}$ is a Lie algebroid
morphism.

\begin{proposition}\label{p-reduction} 
\mbox{\cite{IMMMP}} Let $(A,\lcf\cdot,\cdot\rcf,\rho)$ be a Lie algebroid and  $\widetilde{\pi}:A\to A'$  an epimorphism of vector bundles. Then, there is a Lie algebroid structure on $A'$
such  that $\widetilde{\pi}$ is a Lie algebroid epimorphism if and
only if the following conditions hold:
\begin{enumerate}
\item $\lcf X,Y\rcf$ is a $\widetilde{\pi}$-projectable section of
$A$, for all $X,Y\in \Gamma(A)$ $\widetilde{\pi}$-projectable
sections of $A$.
\item $\lcf X,Y\rcf\in \Gamma(\ker \widetilde{\pi})$, for all
$X,Y\in \Gamma(A)$ with $X\in \Gamma(A)$ a
$\widetilde{\pi}$-projectable section of $A$ and $Y\in
\Gamma(\ker\widetilde{\pi}).$
\end{enumerate}
\end{proposition}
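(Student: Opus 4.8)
The plan is to characterize when the bracket and anchor on $A$ descend to $A'$, working entirely with $\widetilde{\pi}$-projectable sections, which form a basis of the $C^\infty(M')$-module $\Gamma(A')$ by pulling back via $\widetilde{\pi}$. First I would prove the ``only if'' direction: if $A'$ carries a Lie algebroid structure making $\widetilde{\pi}$ a morphism, then for projectable $X,Y$ with projections $X',Y'$, the morphism condition (expressed through $d^{A}(\widetilde{\pi}^*\alpha') = \widetilde{\pi}^*(d^{A'}\alpha')$, or equivalently the compatibility of brackets and anchors under a surjective bundle map) forces $\widetilde{\pi}\circ\lcf X,Y\rcf = \lcf X',Y'\rcf'\circ\pi$, so $\lcf X,Y\rcf$ is projectable, giving condition (i); and taking $Y\in\Gamma(\ker\widetilde{\pi})$, writing $Y$ locally as a $C^\infty(M)$-combination of projectable sections lying in the kernel, the Leibniz rule \eqref{IL} together with $\widetilde{\pi}$-relatedness of anchors shows $\widetilde{\pi}\circ\lcf X,Y\rcf=0$, giving condition (ii).

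For the ``if'' direction, assume (i) and (ii). I would define the anchor $\rho':A'\to TM'$ by $\rho'(X'):=T\pi\circ\rho(X)\circ(\text{any local section of }\pi)$ for $X$ projecting to $X'$; well-definedness requires that $\rho(X)$ be $\pi$-projectable whenever $X$ is $\widetilde{\pi}$-projectable, which follows because if $\widetilde{\pi}\circ X = X'\circ\pi$ then in particular $T\tau'\circ\widetilde\pi\circ X = X'\circ\pi$ on the base, and one checks $\rho(X)$ covers a vector field on $M'$; the value is independent of the choice of $X$ over $X'$ precisely because of (ii) applied with the anchor, i.e. two lifts differ by a kernel section, and $\rho$ of a kernel section that is itself projectable is $\pi$-vertical. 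The bracket $\lcf\cdot,\cdot\rcf'$ on projectable generators is defined by $\lcf X',Y'\rcf':=\widetilde{\pi}\circ\lcf X,Y\rcf$, which is well defined as a section of $A'$ by (i) (it is projectable) and independent of the lifts by (ii) (changing $X$ or $Y$ by a kernel section changes the bracket by a kernel section, using \eqref{IL} and that $\ker\widetilde\pi$ is closed under bracketing with projectable sections). Then I extend $\lcf\cdot,\cdot\rcf'$ to all of $\Gamma(A')$ by forcing the Leibniz rule \eqref{IL} with the newly defined $\rho'$; one must check this extension is consistent (the Leibniz rule over-determines the bracket on a spanning set, but the defining relations on projectable sections are compatible with it because $\rho(X)(f\circ\pi)=(\rho'(X')(f))\circ\pi$).

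The remaining work is verifying the axioms: $\lcf\cdot,\cdot\rcf'$ is skew-symmetric (immediate from that of $\lcf\cdot,\cdot\rcf$ on generators, then preserved under Leibniz extension), satisfies the Jacobi identity, and that $\rho'$ is a bracket homomorphism. For Jacobi, it suffices to check it on projectable generators $X',Y',Z'$, where it reduces to applying $\widetilde{\pi}$ to the Jacobi identity for $\lcf\cdot,\cdot\rcf$ on $X,Y,Z$ on $A$ — here I use that $\lcf X,\lcf Y,Z\rcf\rcf$ is again projectable by iterating (i), so that $\widetilde\pi$ of it equals $\lcf X',\lcf Y',Z'\rcf'\rcf'$ composed with $\pi$; since $\widetilde\pi$ is fiberwise surjective and $\pi$ is a submersion, vanishing of the pullback forces vanishing of the Jacobiator on $A'$. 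That $\rho'$ preserves brackets follows similarly from $\rho$ doing so and $T\pi$-relatedness. Finally, to conclude $\widetilde{\pi}$ is a Lie algebroid morphism one checks \eqref{dAF}, but with the structure defined exactly so that brackets and anchors are $\widetilde\pi$-related this is a formal consequence via formula \eqref{diferencial} evaluated on projectable sections. The main obstacle is the well-definedness and consistency bookkeeping in the ``if'' direction — ensuring the bracket defined on projectable generators, and then Leibniz-extended, is genuinely independent of all the choices of lifts; this is exactly where hypotheses (i) and (ii) are used in tandem, and it is the crux of the argument, the axiom verifications afterwards being essentially formal once $\widetilde\pi$-relatedness is in hand.
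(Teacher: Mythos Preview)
The paper does not prove this proposition; it is stated with a citation to \cite{IMMMP} and the remark that an equivalent dual version appears in \cite{CNS}, so there is no in-paper argument to compare against.

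Your outline is essentially the standard proof and is correct in its broad strokes. One point you should tighten: in the ``if'' direction you assert that $\rho(X)$ is $\pi$-projectable whenever $X$ is $\widetilde\pi$-projectable, and that $\rho(Z)$ is $\pi$-vertical for $Z\in\Gamma(\ker\widetilde\pi)$, but neither of these is immediate from the hypotheses as stated --- you need to extract them from (i) and (ii) via the Leibniz rule. Concretely, for $X,Y$ projectable and $f\in C^\infty(M')$, the section $(f\circ\pi)Y$ is projectable, so by (i) $\lcf X,(f\circ\pi)Y\rcf$ is projectable; subtracting $(f\circ\pi)\lcf X,Y\rcf$ (projectable by (i)) leaves $\rho(X)(f\circ\pi)\,Y$ projectable, and varying $Y$ over a local frame forces $\rho(X)(f\circ\pi)$ to be $\pi$-basic. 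The verticality of $\rho(Z)$ for $Z\in\Gamma(\ker\widetilde\pi)$ follows by the same manipulation using (ii) in place of (i). Also, in the ``only if'' direction your detour through writing $Y\in\Gamma(\ker\widetilde\pi)$ as a combination of projectable kernel sections is unnecessary: any such $Y$ is already $\widetilde\pi$-projectable with projection zero, so (ii) is immediate from the bracket compatibility. With these small clarifications your argument goes through.
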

An equivalent dual version of this result was proved in \cite{CNS}.

Let $X$ be a section of the Lie algebroid $A$. {\it The vertical lift } of $X$ is
the vector field on $A$ given by $X^v(a)=X(\tau(a))_a^v$ for $a\in
A,$ where $\;^v_a:A_{\tau(a)}\to T_a(A_{\tau(a)})$ is the
canonical isomorphism of vector spaces.

On the other hand, there is a unique vector field $X^c$ on $A$,
{\it the complete lift of $X$ to $A$}, such that $X^c$ is
$\tau$-projectable on $\rho(X)$ and
$X^c(\widehat{\alpha})=\widehat{{\mathcal L}_X^A\alpha},$ for all
$\alpha\in \Gamma(A^*)$ (see \cite{GU1,GU2}). Here
$\widehat{\beta}$, with $\beta\in \Gamma(A^*),$ is the linear
function on $A$ induced by $\beta.$

We have that, for all $X,Y\in \Gamma(A),$
\begin{equation}\label{cv} \begin{array}{c}
[X^c,Y^c]=\lcf X,Y\rcf^c,\;\;\; [X^c,Y^v]=\lcf X,Y\rcf^v, \;\;\;
[X^v,Y^v]=0.\end{array}
\end{equation}

The flow of $X^c\in {\frak X}(A)$ is related with Lie algebroid
structure of $A$ as follows.

\begin{proposition}\label{Poisson}\mbox{\cite{R,Mar}}
Let $(A,\lcf\cdot,\cdot\rcf,\rho)$ be a Lie algebroid over $M$ and 
$X$ a section of $A$. Then, for all $P\in \Gamma(\wedge^k
A)$ (respectively, $\alpha\in \Gamma(\wedge^k A^*)$)
\begin{enumerate}
\item[$(i)$] There exists a local flow $\varphi_s:A\to A$ which covers  smooth maps $\bar{\varphi}_s:M\to M$ such
that
\begin{equation}\label{flow}
{\mathcal L}_X^AP=\frac{d}{ds}((\varphi_s)_*P)_{|s=0},\;\;\;
(\mbox{respectively, } {\mathcal
L}_X\alpha=\frac{d}{ds}((\varphi_s)^*\alpha)_{|s=0})
\end{equation}
\item[$(ii)$] ${\mathcal L}_X^AP=0$ if and only if $(\varphi_s)_*P=P.$
\item[$(iii)$] ${\mathcal L}_X^A\alpha=0$ if and only if
$\varphi_ s^*\alpha=\alpha$.
\item [$(iv)$] The vector field $X^c$ on $A$ is complete if and
only if the vector field $\rho(X)$ on $M$ is complete.
\end{enumerate}
\end{proposition}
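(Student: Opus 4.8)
The plan is to take $\varphi_s$ to be the (local) flow of the complete lift $X^c$ itself, and to exploit two structural facts about it. First, since $X^c$ is $\tau$-projectable onto $\rho(X)$, one has $\tau\circ\varphi_s=\bar\varphi_s\circ\tau$, where $\bar\varphi_s$ is the local flow of $\rho(X)$; so each $\varphi_s$ covers $\bar\varphi_s$, which already accounts for the maps $\bar\varphi_s$ appearing in $(i)$. Second, $X^c$ is a fiberwise linear vector field on $A$: it carries linear functions $\widehat\alpha$ to linear functions $\widehat{{\mathcal L}_X^A\alpha}$ and pullback functions $f\circ\tau$ to pullback functions $\rho(X)(f)\circ\tau$, and these two conditions characterize linear vector fields on a vector bundle (alternatively, quote this from \cite{GU1,GU2}). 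Hence, wherever it is defined, $\varphi_s$ is a vector bundle isomorphism over $\bar\varphi_s$, so that $(\varphi_s)^*\alpha$ is a well-defined $k$-section of $A^*$ for $\alpha\in\Gamma(\wedge^kA^*)$ and $(\varphi_s)_*P$ a well-defined $k$-section of $A$ for $P\in\Gamma(\wedge^kA)$.

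For $(i)$ in the $A^*$-case, both $\alpha\mapsto\frac{d}{ds}\big|_{s=0}\varphi_s^*\alpha$ and ${\mathcal L}_X^A$ are derivations of the exterior algebra $\Gamma(\wedge^\bullet A^*)$, so it suffices to match them on generators. On $\Gamma(\wedge^0A^*)=C^\infty(M)$ one has $\varphi_s^*f=f\circ\bar\varphi_s$, whose $s$-derivative at $0$ is $\rho(X)(f)={\mathcal L}_X^Af$; on $\Gamma(A^*)$ one has $\widehat{\varphi_s^*\alpha}=\widehat\alpha\circ\varphi_s$ as functions on $A$, so differentiating at $s=0$ gives $X^c(\widehat\alpha)=\widehat{{\mathcal L}_X^A\alpha}$, and injectivity of $\beta\mapsto\widehat\beta$ yields $\frac{d}{ds}\big|_{s=0}\varphi_s^*\alpha={\mathcal L}_X^A\alpha$. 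The $A$-case is then the dual statement: pairing $(\varphi_s)_*P$ against sections of $A^*$, rewriting this pairing in terms of $P$ evaluated on the $\varphi_s^*\alpha_i$ and of $\bar\varphi_s$, and differentiating at $s=0$ with the product rule, one recovers precisely the defining formula of ${\mathcal L}_X^AP$ given above (with the push-forward orientation convention used in the statement).

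For $(ii)$ and $(iii)$ I would use the standard one-parameter-group argument. Wherever the local flow is defined, functoriality of push-forward gives $(\varphi_{s+t})_*P=(\varphi_s)_*\big((\varphi_t)_*P\big)$; differentiating in $t$ at $0$ and using $(i)$ yields $\frac{d}{du}\big|_{u=s}(\varphi_u)_*P=(\varphi_s)_*({\mathcal L}_X^AP)$. Hence ${\mathcal L}_X^AP=0$ forces $s\mapsto(\varphi_s)_*P$ to be locally constant, so $(\varphi_s)_*P=P$; conversely, differentiating $(\varphi_s)_*P\equiv P$ at $s=0$ gives ${\mathcal L}_X^AP=0$. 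Replacing $(\varphi_s)_*$ by $\varphi_s^*$ throughout proves $(iii)$.

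For $(iv)$: if $X^c$ is complete then $\varphi_s$ is globally defined, and since $\tau$ is a surjective submersion with $\tau\circ\varphi_s=\bar\varphi_s\circ\tau$, the flow $\bar\varphi_s$ of $\rho(X)$ is globally defined, i.e. $\rho(X)$ is complete. For the converse, fix $a_0\in A$, set $x_0=\tau(a_0)$, let $T>0$, and note that $s\mapsto\bar\varphi_s(x_0)$ is defined on $[0,T]$ with compact image $K$. Cover $K$ by finitely many bundle-trivializing charts; in each, $X^c$ has base component depending only on the base coordinates and fiber component linear in the fiber coordinates, with all coefficient functions bounded along $K$, so a Gronwall estimate bounds the fiber part of the integral curve of $X^c$ through $a_0$ on its maximal interval of existence inside $[0,T]$. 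That curve therefore stays in a compact subset of $A$, hence is in fact defined on all of $[0,T]$ (and on $[-T,0]$ by the same argument); since $a_0$ and $T$ were arbitrary, $X^c$ is complete. I expect the crux to be the first paragraph — certifying that the flow of $X^c$ is genuinely a one-parameter (local) group of vector bundle automorphisms covering the flow of $\rho(X)$, which is what makes all the push-forwards and pull-backs of multisections meaningful — together with the Gronwall growth control in $(iv)$, the only place where completeness of $\rho(X)$ is actually used; the remaining steps are routine once these are in hand.
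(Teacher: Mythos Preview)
The paper does not supply a proof of this proposition: it is quoted from the literature (references \cite{R,Mar}), as indicated by the citation immediately following the proposition header. Your argument is a correct and self-contained proof along the expected lines---identifying $\varphi_s$ with the flow of $X^c$, using linearity of $X^c$ on fibers to make sense of $(\varphi_s)_*$ and $\varphi_s^*$ on multisections, checking \eqref{flow} on generators of $\Gamma(\wedge^\bullet A^*)$, and handling $(iv)$ via a linear-growth/Gronwall bound along the compact base trajectory---so there is nothing to compare against in the paper itself.
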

Here $(\varphi_s)_*P$ is the section of $\wedge^kA\to M$ defined by 
$$((\varphi_s)_*P)(x)(\alpha_1,\dots, \alpha_k)=P(\bar{\varphi}_s^{-1}(x))(\varphi_s^*(\alpha_1),\dots \varphi_s^*(\alpha_k))$$
for all $x\in M$ and $\alpha_1,\dots , \alpha_k\in A^*_x.$

If $X$ is a section of $A$ we define {\it the complete
lift } of $X$ to $A^*$, as the vector field $X^{*c}$ on $A^*$
which is $\tau^*$-projectable on $\rho(X)$ and
$X^{*c}(\widehat{Y})=\widehat{\lcf X,Y\rcf},$ for all
$Y\in\Gamma(A)$  (see \cite{LMM}). If $\{\varphi_s\}$ is the local flow of $X^c$ then the local flow of $X^{*c}$ is $\{\varphi_{-s}^*\}.$ 

If $(x^i)$ are local coordinates on $M$ and $\{e_I\}$ is a local
basis of sections of $A$, then we have the local functions
$\rho_I^i$, $C_{IJ}^K$, ({\it the structure functions of A}) on
$M$ which are characterized by
\[
\rho(e_I)=\rho_I^i\frac{\partial }{\partial x^i},\;\;\;\; \lcf
e_I,e_J\rcf=C_{IJ}^K e_K.
\]
If $(x^i, y^I)$ (respectively, $(x^i, y_I)$) denote the local
coordinates on $A$ (respectively, $A^*$) induced by the local
basis $\{e^I\}$ (respectively, the dual basis $\{e_I\}$) then, for
a section $X=X^I e_I$ of $A,$ the vector fields  $X^v$, $X^c$ and
$X^{*c}$
 are given by
\begin{eqnarray}\label{levantamiento}
X^v=X^I\frac{\partial }{\partial y^I},\;\;\;
X^c=X^I\rho_I^i\frac{\partial }{\partial x^i} +
\big(\rho_J^i\frac{\partial X^I}{\partial x^i}- X^K C_{KJ}^I
\big)y^J\frac{\partial }{\partial y^I},\\
X^{*c}=X^I \rho_I^i\frac{\partial}{\partial x^i}-(\rho_I^i\frac{\partial X^K}{\partial x^i}+
C_{IJ}^KX^J)y_K\frac{\partial }{\partial y_I}.
\end{eqnarray}

%%%%%%%%%%%%%%%%%%%%%%%%%%%%%%%%%%%%%%
\setcounter{equation}{0}
\section{Reduction of symplectic-like Lie algebroids in the presence of a
momentum map}\label{section reduction liealg}
\subsection{Actions by complete lifts  for
Lie algebroids} \label{section3.1}

Let $(A,\lcf\cdot,\cdot\rcf,\rho)$ be a Lie algebroid over the
manifold $M$ and let $\tau:A\to M$ be the corresponding vector bundle
projection. We consider an  left action $\Phi:G\times A\to A$ by vector
bundle automorphisms of a connected Lie group $G$ on $A$. Then, $\Phi$ induces a linear left action $\Phi^*:G\times A^*\to A^*$  given by
$$(\Phi^*_g)_{|A_x^*}=((\Phi_{g^{-1}})_{|A_{\phi_g(x)}})^*:A_x^*\to A_{\phi_g(x)}^*,\,\;\; \mbox{ for } g\in G \mbox{ and } x\in M,$$
where $\phi:G\times M\to M$ is the corresponding action of $G$ on $M.$

We say that $\Phi:G\times A\to A $ is an action {\it by complete
lifts} if there is a Lie algebra anti-morphism $\psi:{\frak g}\to
\Gamma(A)$ such that the infinitesimal generator of $\xi\in {\frak
g}$ with respect to $\Phi$ is just the complete lift of
$\psi(\xi)$ to $A$. Note that this condition implies that
$\xi_M=\rho(\psi(\xi))$, where $\xi_M$ is the infinitesimal
generator of the action $\phi:G\times M\to M$  with respect to $\xi$. Moreover, $\psi(\xi)^c$
is a morphic vector field in the sense of \cite{MX2} and
therefore, for all $g\in G$, $\Phi_g:A\to A$ is a Lie algebroid
automorphism. Thus, the induced action $\Phi^*:G\times A^*\to A^*$ of $G$ on $A^*$ is Poisson with respect to the corresponding linear Poisson structure on $A^*.$ Furthermore, we have the following result.  

\begin{proposition}\label{AP}
Let $(A,\lcf\cdot,\cdot\rcf,\rho)$ be a Lie algebroid on the manifold $M,$ $\Phi:G\times A\to A$  an action by vector bundle automorphisms of a connected Lie group $G$ on $A$ and $\psi:{\mathfrak g}\to \Gamma(A)$  a Lie algebra anti-morphism. Then, 
 $\Phi:G\times A\to A$ is an action of the Lie group $G$ on $A$  by  complete lifts with respect to $\psi$ if and only if   
 $\Phi^*:G\times A^*\to A^*$ is an action on $A^*$ by Poisson morphisms such that the infinitesimal generator $\xi_{A^*}$ associated with $\xi\in {\mathfrak g}$ is just the Hamiltonian vector field corresponding to the linear function $\widehat{\psi(\xi)}$ associated with the section 
 $\psi(\xi)\in \Gamma(A).$ 
\end{proposition}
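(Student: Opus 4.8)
The plan is to reduce the statement to the already-established dictionary between complete lifts on $A$ and the geometry of $A^*$. Recall the key facts available: (i) for a section $X$ of $A$, the complete lift $X^c\in\mathfrak{X}(A)$ is characterized by being $\tau$-projectable onto $\rho(X)$ together with $X^c(\widehat{\alpha})=\widehat{\mathcal{L}^A_X\alpha}$ for all $\alpha\in\Gamma(A^*)$; (ii) if $\{\varphi_s\}$ is the local flow of $X^c$, then the local flow of $X^{*c}$ is $\{\varphi_{-s}^*\}$, and $X^{*c}$ is $\tau^*$-projectable onto $\rho(X)$ with $X^{*c}(\widehat{Y})=\widehat{\lcf X,Y\rcf}$ for $Y\in\Gamma(A)$; and (iii) from \eqref{P-lineal}, the Hamiltonian vector field of the linear function $\widehat{X}$ on $A^*$ satisfies $\mathcal{H}_{\widehat{X}}(\widehat{Y})=\{\widehat{X},\widehat{Y}\}_{\Pi_{A^*}}=-\widehat{\lcf X,Y\rcf}$ and $\mathcal{H}_{\widehat{X}}(f_M\circ\tau_*)=-\rho(X)(f_M)\circ\tau_*$. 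The first observation is therefore that $X^{*c}=-\mathcal{H}_{\widehat{X}}$: both vector fields on $A^*$ agree on all linear functions $\widehat{Y}$ and on all basic functions $f_M\circ\tau_*$, and such functions generate (locally) the full algebra of functions on $A^*$, so they coincide. This is the conceptual core and should be isolated as a short lemma or a sentence.

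Next I would relate the infinitesimal generators of $\Phi^*$ to the complete lifts on $A^*$. Since $\Phi$ is an action by vector bundle automorphisms with infinitesimal generator $\xi_A$, the dual action $\Phi^*$ has infinitesimal generator $\xi_{A^*}$, and by the very definition of $\Phi^*$ as the inverse-transpose action one has the flow relation: if $\Phi^{\exp(t\xi)}$ has generator $\xi_A$, then $(\Phi^*)^{\exp(t\xi)}=((\Phi^{\exp(-t\xi)})^*)$, so $\xi_{A^*}$ is, up to the sign bookkeeping, the "dual lift" of $\xi_A$. Concretely: $\Phi$ being by complete lifts with respect to $\psi$ means $\xi_A=\psi(\xi)^c$; its flow is $\{\varphi_s\}$ with $\varphi_s=\Phi_{\exp(s\xi)}$; then the flow of $\xi_{A^*}$ is $\{\Phi^*_{\exp(s\xi)}\}$, and unwinding the definition of $\Phi^*$ this is exactly $\{\varphi_{-s}^*\}$, which by fact (ii) is the flow of $\psi(\xi)^{*c}$. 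Hence $\xi_A=\psi(\xi)^c$ if and only if $\xi_{A^*}=\psi(\xi)^{*c}$, and by the first step $\psi(\xi)^{*c}=-\mathcal{H}_{\widehat{\psi(\xi)}}$. This gives the equivalence in one clean direction and, reading the chain backwards, in the other.

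For the "only if" direction I can alternatively, and perhaps more transparently, argue by uniqueness: assume $\Phi$ is by complete lifts. We already know (stated in the text preceding the proposition) that each $\Phi_g$ is a Lie algebroid automorphism, hence each $\Phi^*_g$ is a Poisson automorphism; so it remains only to identify $\xi_{A^*}$. A vector field on $A^*$ is determined by its action on linear and basic functions, so it suffices to check $\xi_{A^*}(\widehat{Y})=-\widehat{\lcf\psi(\xi),Y\rcf}$ and $\xi_{A^*}(f_M\circ\tau_*)=-\rho(\psi(\xi))(f_M)\circ\tau_*$; both follow by differentiating the flow identity $\Phi^*_{\exp(s\xi)}=\varphi_{-s}^*$ at $s=0$ and using the defining properties of $\psi(\xi)^{*c}$. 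Comparing with \eqref{P-lineal} identifies $\xi_{A^*}$ with $-\mathcal{H}_{\widehat{\psi(\xi)}}$; since the convention in the proposition's statement is that the generator be "the Hamiltonian vector field corresponding to $\widehat{\psi(\xi)}$," one checks the sign convention is consistent with how infinitesimal generators of the (left) action are normalized here — this is the one place to be careful. For the "if" direction, assume $\xi_{A^*}=\mathcal H_{\widehat{\psi(\xi)}}$ (in the paper's sign convention) and $\Phi^*$ acts by Poisson morphisms; then $\xi_{A^*}=\psi(\xi)^{*c}$ by the uniqueness argument, so its flow is $\varphi_{-s}^*$ where $\varphi_s$ is the flow of $\psi(\xi)^c$; but the flow of $\xi_{A^*}$ is also $\Phi^*_{\exp(s\xi)}=(\Phi_{\exp(-s\xi)})^*$, and dualizing an equality of flows of diffeomorphisms on $A^*$ recovers $\Phi_{\exp(s\xi)}=\varphi_s$, i.e. $\xi_A=\psi(\xi)^c$, which is the assertion that $\Phi$ is by complete lifts.

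The main obstacle I anticipate is purely bookkeeping: getting every sign right across the three conventions in play (the sign in $\psi$ being an \emph{anti}-morphism, the sign in \eqref{P-lineal} for the linear Poisson bracket, and the relation "flow of $X^{*c}$ is $\varphi_{-s}^*$"), and making sure the normalization of "infinitesimal generator of a left action" matches the one under which $\xi_{A^*}=\mathcal H_{\widehat{\psi(\xi)}}$ rather than its negative. Everything else is a routine application of the principle that a vector field on $A^*$ is determined by its values on linear and pullback functions, together with Proposition~\ref{Poisson}.
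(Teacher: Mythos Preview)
Your proposal is correct and follows essentially the same route as the paper: first identify $\psi(\xi)^{*c}$ with the Hamiltonian vector field of $\widehat{\psi(\xi)}$ by checking the action on linear functions $\widehat{Y}$ and on basic functions $f_M\circ\tau_*$ using \eqref{P-lineal}, and then use the flow correspondence between $X^c$ and $X^{*c}$ (flow of $X^{*c}$ is $\{\varphi_{-s}^*\}$) to conclude that $\xi_A=\psi(\xi)^c$ is equivalent to $\xi_{A^*}=\psi(\xi)^{*c}$. The only point to watch is exactly the one you flag: the paper's Hamiltonian vector field is defined as $H_f=i_{df}\Pi_{A^*}$, which in the computations there satisfies $H_{\widehat{\psi(\xi)}}(\widehat{X})=\{\widehat{X},\widehat{\psi(\xi)}\}_{\Pi_{A^*}}$, so one obtains $H_{\widehat{\psi(\xi)}}^{\Pi_{A^*}}=(\psi(\xi))^{*c}$ \emph{without} the minus sign you write; once you align your convention for $\mathcal H$ with the paper's, your argument goes through verbatim.
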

\begin{proof}
Denote by $\Pi_{A^*}$ the linear Poisson structure on $A^*.$ We will prove that the Hamiltonian vector field $$H_{\widehat{\psi(\xi)}}^{\Pi_{A^*}}=i_{d\widehat{\psi(\xi)}}\Pi_{A^*}\in {\mathfrak X}(A^*)$$ is just the infinitesimal generator $\xi_{A^*}\in {\mathfrak X}(A^*)$  of $\xi$ with respect to the action $\Phi^*$. In fact, if $f\in C^\infty(M)$ and $X\in \Gamma(A),$ using (\ref{P-lineal}), we have that
$$
\begin{array}{rcl}
H_{\widehat{\psi(\xi)}}^{\Pi_{A^*}}(f\circ \tau_{*})&=&\{f\circ \tau_{*},\widehat{\psi(\xi)}\}_{\Pi_{A^*}}=\rho(\psi(\xi))(f)\circ \tau_{*}=(\psi(\xi))^{*c}(f\circ \tau_{*})\\
H_{\widehat{\psi(\xi)}}^{\Pi_{A^*}}(\widehat{X})&=&\{\widehat{X},\widehat{\psi(\xi)}\}_{\Pi_{A^*}}=\widehat{\lcf \psi(\xi),X\rcf}=
(\psi(\xi))^{*c}(\widehat{X}).
\end{array}
$$
Here, $\{\cdot,\cdot\}_{\Pi_{A^*}}$ is the Poisson bracket associated with ${A^*}.$ Thus, $H_{\widehat{\psi(\xi)}}^{\Pi_{A^*}}=(\psi(\xi))^{*c}$. 

On the other hand,   the flow of $(\psi(\xi))^{c}\in {\frak X}(A)$ is $\{\Phi_{exp(t\xi)}:A\to A\}_{t\in \R}$ if and only if  the flow of $(\psi(\xi))^{*c}\in {\mathfrak X}( A^*)$ is $\{\Phi^*_{exp(-t\xi)}:A^*\to A^*\}_{t\in \R}$ and,  in consequence,  we have that the proposition holds. 

\end{proof}

%%%%%%%%%%%%%%%%%%%%%%%%%%NUEVO

\begin{examples}\label{examples}{\rm
$(i)$  If $A=TM$ and $\Phi=T\phi$ is the tangent lift
of the action $\phi:G\times M\to M$ then it is clear that $\Phi$
is an action by complete lifts with Lie anti-morphism 
$$\psi: {\mathfrak g}\to {\mathfrak X}(M),\;\;\; \psi(\xi)=\xi_M.$$

\medskip

$(ii)$ Let $G$  be Lie group.   If $({\mathfrak g},[\cdot ,\cdot]_{\mathfrak g})$ is the Lie algebra associated with $G$, then we have, in a natural way,  a Lie algebroid structure on ${\mathfrak g}\times TM\to M,$ where the Lie bracket and the anchor map are characterized by 
$$ [(\xi_1,X_1), (\xi_2,X_2)]=([\xi_1,\xi_2]_{\mathfrak g},[X_1,X_2]),\;\;\;\; \rho(\xi,X)=X$$
for all $\xi_1,\xi_2,\xi\in {\mathfrak g}$ and $X_1,X_2,X\in {\mathfrak X}(M).$ 

Now, consider  a free and proper action $\phi:G\times M\to M$ of $G$ on the manifold $M$. We denote by  $\Phi:G\times ({\mathfrak g}\times TM)\to  {\mathfrak g}\times TM$ and $\psi:{\mathfrak g}\to \Gamma({\mathfrak g}\times TM)\cong C^\infty(M,{\mathfrak g})\times {\mathfrak X}(M)$ 
the action of $G$ on ${\mathfrak g}\times TM$ and the Lie algebra anti-morphism, respectively,  given by 
$$\Phi_g(\xi,v_x)=(Ad^G_{g}\xi,T_x\phi_g(v_x)),\;\;\; \xi\in {\mathfrak g} \mbox{ and } v_x\in T_xM$$
 $$\psi(\bar{\xi})=(-\bar{\xi}, \bar{\xi}_M),\;\; \bar{\xi}\in {\mathfrak g},$$ 
 where $Ad^G:G\times {\mathfrak g}\to {\mathfrak g}$ is the adjoint action of $G$ on ${\mathfrak g}.$ 
Note that  if $ad^G_{{\bar{\xi}}}:{\mathfrak g}\to {\mathfrak g}$ denotes  the infinitesimal generator of the adjoint action for  ${{\bar{\xi}}}\in {\mathfrak g},$  then the infinitesimal generator of $\bar{\xi}\in {\mathfrak g}$ with respect $\Phi$ is just $(ad^G_{\bar{\xi}},\bar{\xi}_M^c).$ 
 Thus, $\Phi$ is a free and proper action by complete lifts with respect to $\psi.$ 
  }
 \end{examples}
 %%%%%%%%%%%%%%%%%%%
\begin{remark}\label{phi}
{\rm Suppose that we have an action $\Phi:G\times A\to A$ of a Lie group $G$ on a Lie algebroid $A$ by complete lifts with respect to $\psi:{\mathfrak g}\to \Gamma(A)$ such that the corresponding action $\phi:G\times M\to M$ on $M$ is free.  In such a case, for all $x\in M$, $\psi_x:{\mathfrak g}\to A_x$ is injective. Indeed, if $\xi,\xi'\in {\mathfrak g}$ satisfy $\psi_x(\xi)=\psi_x(\xi')$, we have that $\xi_{M}(x)=\rho(\psi_x(\xi))=\rho(\psi_x(\xi'))=\xi'_{M}(x)$ which implies, using the fact that $\phi$ is free, that $\xi=\xi'.$}
\end{remark}

Next, we will prove that each action of a connected Lie group $G$
over a Lie algebroid $A$ by complete lifts induces an affine
action of the Lie group $TG$ over $A$. Previously, we recall some
facts which are related with the Lie group structure of $TG.$

If $G$ is a Lie group then $TG$ is also a Lie group. In fact, if
$\cdot:G\times G\to G$ denotes the multiplication of $G$, then the
tangent map $T\cdot:TG\times TG\to TG$ of $\cdot$ is such that
$(TG,T\cdot)$ is a Lie group. Moreover, $TG$ may be identified
with the cartesian product $G\times {\frak g}$, where $({\frak
g},[\cdot,\cdot]_{\frak g})$ is the corresponding  Lie algebra
associated with $G$. This identification is given by
\[
TG\to G\times {\frak g},\;\;\;\; X_g\in T_gG\to
(g,(T_gl_{g^{-1}})(X_g))\in G\times {\frak g},
\]
$l_{g^{-1}}:G\to G$ being the left translation by $g^{-1}$ on $G$.
The corresponding Lie group structure on $G\times {\frak g}$ is
defined as follows
\begin{equation}\label{A0}
(g,\xi)\cdot (g',\xi')=((g\cdot g'),\xi' + Ad^G_{(g')^{-1}}\xi)
\end{equation}
and its associated Lie algebra is ${\frak g}\times {\frak g}$ with
the Lie bracket
\begin{equation}\label{prodal}
[(\xi,\eta),(\xi',\eta')]_{{\frak g}\times {\frak
g}}=([\xi,\xi']_{\frak g}, [\xi,\eta']_{\frak
g}-[\xi',\eta]_{\frak g}).
\end{equation}
Here $Ad^G:G\times {\frak g}\to {\frak g}$ denotes the adjoint
action of $G$.

Moreover, if $Coad^{TG}:(G\times \frak g)\times (\frak g^*\times
\frak g^*) \to \frak g^* \times \frak g^*$ is the left coadjoint  action
of $TG\cong G\times \frak g$ on the dual space of the Lie algebra
$(\frak g\times \frak g, [ \cdot ,\cdot ]_{\frak g\times \frak
g})$, then
\begin{equation}\label{coadjunto-TG}
Coad ^{TG}_{(g,\xi )} (\mu ',\mu '')=(Coad^G_g(\mu '+coad^G_\xi
\mu ''),Coad^G _g\mu ''),
\end{equation}
for $(g,\xi )\in G\times \frak g$ and $(\mu ',\mu '')\in \frak
g^*\times \frak g^*$, where $Coad ^G:G\times \frak g^*\to\frak
g^*$ is the left coadjoint action associated with $G$ and $coad^G:\frak
g\times \frak g^*\to \frak g^*$ is the corresponding infinitesimal left 
coadjoint  action.

The following proposition describes how $\psi$ works  with
respect to the action $\Phi$.

\begin{proposition}\label{prop3.2}
Let $\Phi:G\times A\to A$ be an action of a connected Lie group
$G$ on the Lie algebroid $A$ by complete lifts with respect to
$\psi:{\frak g}\to \Gamma(A)$. Then,
\begin{equation}\label{eqlemma}
\Phi_g(\psi(Ad^G_{g^{-1}}\xi)(x))=\psi(\xi)(\phi_g(x))
\end{equation}
for all $\xi\in {\frak g},$ $g \in G$ and $x\in M$.
\end{proposition}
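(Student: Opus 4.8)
The plan is to prove \eqref{eqlemma} by differentiating with respect to a one-parameter subgroup of $G$ and using the defining property of an action by complete lifts, namely that the infinitesimal generator of $\xi\in\mathfrak g$ with respect to $\Phi$ is $\psi(\xi)^c$. Since $G$ is connected, it suffices to verify the identity for $g$ in a neighborhood of the identity, hence it is enough to check that both sides agree at $g=e$ (where both equal $\psi(\xi)(x)$) and that their derivatives along $t\mapsto \exp(t\eta)$ coincide for every $\eta\in\mathfrak g$. Concretely, I would fix $\xi\in\mathfrak g$, $x\in M$ and $\eta\in\mathfrak g$, and set $g=\exp(t\eta)$, considering the curve $t\mapsto \Phi_{\exp(t\eta)}\big(\psi(Ad^G_{\exp(-t\eta)}\xi)(x)\big)$ in $A$.

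The key computation is to differentiate this curve at $t=0$ using the Leibniz rule. One contribution comes from differentiating $\Phi_{\exp(t\eta)}$, which by definition of the infinitesimal generator yields the value of the vector field $\psi(\eta)^c$ at the point $\psi(\xi)(x)$. The other contribution comes from differentiating the section $\psi(Ad^G_{\exp(-t\eta)}\xi)$ at $t=0$, which gives $\psi\big(\tfrac{d}{dt}\big|_0 Ad^G_{\exp(-t\eta)}\xi\big)(x) = \psi(-ad^G_\eta\xi)(x) = -\psi([\eta,\xi]_{\mathfrak g})(x)$, evaluated as a vertical tangent vector at $\psi(\xi)(x)$; since $\psi$ is a Lie algebra anti-morphism this is $\psi(\lcf\psi(\xi),\psi(\eta)\rcf$ reading backward$)$—more precisely $\psi(-[\eta,\xi]_{\mathfrak g}) = \psi([\xi,\eta]_{\mathfrak g}) = -\lcf\psi(\xi),\psi(\eta)\rcf$. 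So the total derivative of the left-hand side at $t=0$ is $\psi(\eta)^c(\psi(\xi)(x)) - \lcf\psi(\xi),\psi(\eta)\rcf^v(\psi(\xi)(x))$. On the right-hand side, differentiating $t\mapsto \psi(\xi)(\phi_{\exp(t\eta)}(x))$ at $t=0$ gives $T\psi(\xi)(\eta_M(x)) = T\psi(\xi)(\rho(\psi(\eta))(x))$, which is exactly the $\tau$-projectable part of a lift; comparing with the coordinate formula \eqref{levantamiento} for the complete lift and the relation $[X^c,Y^c]=\lcf X,Y\rcf^c$ from \eqref{cv} shows the two derivatives agree. Alternatively, and more cleanly, one can invoke Proposition \ref{prop3.2} is what we are proving, so I would instead argue via flows: the flow of $\psi(\xi)^c$ is $\{\Phi_{\exp(t\xi)}\}$, and a standard naturality-of-flows argument shows $\Phi_g\circ\psi(Ad^G_{g^{-1}}\xi)^c\circ\Phi_g^{-1}$ is the generator obtained by conjugating, which is $\psi(\xi)^c$; equality of these $\tau$-projectable complete lifts forces equality of the underlying sections, giving \eqref{eqlemma}.

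The cleanest route, which I would actually write up, is the flow argument: conjugation of vector fields transports flows, so for fixed $g\in G$ the vector field $(\Phi_g)_*\big(\psi(Ad^G_{g^{-1}}\xi)^c\big)$ has flow $\Phi_g\circ\Phi_{\exp(t\,Ad^G_{g^{-1}}\xi)}\circ\Phi_{g^{-1}} = \Phi_{g\exp(t\,Ad^G_{g^{-1}}\xi)g^{-1}} = \Phi_{\exp(t\xi)}$, which is the flow of $\psi(\xi)^c$; hence $(\Phi_g)_*\big(\psi(Ad^G_{g^{-1}}\xi)^c\big) = \psi(\xi)^c$. Since $\Phi_g$ is a Lie algebroid automorphism (noted before Proposition \ref{AP}), pushing forward a complete lift gives the complete lift of the pushed-forward section, so $\psi(\xi)^c = \big(\Phi_g\cdot\psi(Ad^G_{g^{-1}}\xi)\big)^c$ where $\Phi_g\cdot X$ denotes the section $x\mapsto \Phi_g(X(\phi_{g^{-1}}(x)))$; since a section is determined by its complete lift we conclude $\psi(\xi)(\phi_g(x)) = \Phi_g\big(\psi(Ad^G_{g^{-1}}\xi)(x)\big)$, which is \eqref{eqlemma}.

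The main obstacle is organizational rather than computational: one must be careful with the conventions for left versus right actions and for $Ad^G$ versus $Ad^G_{g^{-1}}$ so that the conjugation identity $g\exp(t\,Ad^G_{g^{-1}}\xi)g^{-1} = \exp(t\xi)$ comes out with the correct placement of inverses, and one must cite correctly that $\Phi_g$ is a Lie algebroid automorphism (so that it intertwines complete lifts) and that the complete lift map $X\mapsto X^c$ is injective on sections. Everything else is the naturality of flows under diffeomorphisms, applied in the Lie algebroid setting via Proposition \ref{Poisson}.
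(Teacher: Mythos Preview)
Your conjugation-of-flows argument is clean and almost works, but the final step has a genuine gap: the map $X\mapsto X^c$ from $\Gamma(A)$ to $\mathfrak X(A)$ is \emph{not} injective in general. From the local formula \eqref{levantamiento}, one sees that $X^c=0$ exactly when $\rho(X)=0$ and $\lcf X,Y\rcf=0$ for all $Y$; so any nonzero section in the ``center'' of the Lie algebroid (for instance, a nonzero central element of a Lie algebra viewed as a Lie algebroid over a point) has vanishing complete lift. Thus from $(\Phi_g\cdot\psi(Ad^G_{g^{-1}}\xi))^c=\psi(\xi)^c$ you can only conclude that the two sections differ by a central section, not that they are equal. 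Your earlier differentiation-at-$e$ sketch has a related issue: matching values and first derivatives at $g=e$ does not by itself give equality along the whole one-parameter subgroup.

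The paper sidesteps this by bringing in the \emph{vertical} lifts, for which $X\mapsto X^v$ \emph{is} injective. Concretely, it assembles $\psi(\xi)^c+\psi(\eta)^v$ into an infinitesimal $TG$-action, integrates it (via Palais' theorem in the simply connected case, then descends through the universal cover), and reads off section values from the vertical part via $\Phi^T((e,\eta),0_x)=\psi(\eta)(x)$. Your flow argument could be salvaged along similar lines: since $\Phi_g$ is a vector bundle automorphism one has $(\Phi_g)_*X^v=(\Phi_g\cdot X)^v$, so it would suffice to show $(\Phi_g)_*\psi(Ad^G_{g^{-1}}\xi)^v=\psi(\xi)^v$; but the flow of $\psi(\eta)^v$ is fiberwise translation, not $\Phi_{\exp(t\eta)}$, so the conjugation identity you used no longer applies directly, and one is led back to the $TG$-picture the paper develops.
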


\begin{proof}
 We organize the proof in two steps.

{\it First step:} Suppose that $G$ is a connected and simply
connected Lie group. Consider the map
\[
\psi ^{cv}:{\frak g}\times {\frak g}\to {\frak X}(A),\;\;\; \psi ^{cv}(\xi,\eta)=(\psi(\xi))^c + (\psi(\eta))^v.
\]
Using (\ref{cv}), we may prove easily that $\psi^{cv}$ is an
infinitesimal action of $TG$ over $A$, that is, $\psi^{cv}$ is
$\R$-linear and
\[
\psi ^{cv}([(\xi,\eta),(\xi',\eta')]_{{\frak g}\times {\frak
g}})=[\psi ^{cv}(\xi,\eta), \psi ^{cv}(\xi',\eta')].
\]
Then, since the vector field $\psi ^{cv}(\xi,\eta)\in {\frak X}(A)$
is complete, from Palais Theorem (see \cite{Palais}), there is a
unique action $\Phi ^T:TG\times A\to A$ from $TG\cong G\times
{\frak g}$ such that for all $(\xi,\eta)\in {\frak g}\times {\frak
g}$
\[
(\xi,\eta)_A=\psi ^{cv}(\xi,\eta)=(\psi(\xi))^c + (\psi(\eta))^v.
\]

Here $(\xi,\eta)_A\in {\frak X}(A)$ is the infinitesimal generator
of $(\xi,\eta)$ with respect to the action $\Phi^T.$

Now, suppose that $g=exp_G(\eta)$. Then, we have that
\begin{equation}\label{l1}
\Phi_g(\psi(Ad^G_{g^{-1}}\xi)(x))=\Phi ^T((g,0_{\frak
g}),\psi(Ad^G_{g^{-1}}\xi)(x))
\end{equation}
with $0_{\frak g}$ being the zero of ${\frak g}.$ In fact, one can
prove that
\begin{equation}\label{l3}
\pi:\R\to G\times {\frak g},\;\;\; s\mapsto \pi(s)=
(exp_G(s\eta),0_{\frak g}) \end{equation} is an one-parameter
subgroup and $\frac{d\pi}{ds}_{|s=0}=(\eta,0_{\frak g})$. So,
$\Phi ^T((exp_G(s\eta),0_{\frak g}),\psi(Ad^G_{g^{-1}}\xi(x))$ is
just the integral curve of $\phi ^T(\eta, 0_{\frak
g})=(\psi(\eta))^c$ at the point $\psi(Ad^G_{g^{-1}}\xi)(x)\in
A_x.$ In consequence,
\[
\Phi ^T((exp_G(s\eta),0_{\frak
g}),\psi(Ad^G_{g^{-1}}\xi)(x))=\Phi({exp_G(s\eta)},\psi(Ad^G_{g^{-1}}\xi)(x)).
\]
In particular, when $s=1$, we obtain (\ref{l1}).

Furthermore,
\begin{equation}\label{l2}
\psi(Ad^G_{g^{-1}}\xi)(x)=\Phi ^T((e,Ad^G_{g^{-1}}\xi),0_x)
\end{equation}
where $0_x$ is the zero of $A_x$ and $e$ is the identity element
of $G$.

In fact, in order to prove (\ref{l2}), we consider the
one-parameter subgroup
\begin{equation}\label{pi'}
\pi':\R\to G\times {\frak g},\;\;\;
s\mapsto\pi'(s)=(e,sAd^G_{g^{-1}}\xi).
\end{equation}
Then, $\frac{d\pi'}{ds}_{|s=0}=(0_{\frak g},Ad^G_{g^{-1}}\xi)\in
{\frak g}\times {\frak g}$ and, therefore, $\Phi
^T((e,sAd^G_{g^{-1}}\xi),0_x))$ is just the integral curve of
$\psi^{cv} (0_{\frak
g},Ad^G_{g^{-1}}\xi)=(\psi(Ad^G_{g^{-1}}\xi))^v\in {\frak X}(A)$
at the point $0_x\in A_x$, i.e.,
\[
\Phi ^T((e,sAd^G_{g^{-1}}\xi),0_x)=s\psi(Ad^G_{g^{-1}}\xi)(x).
\]
In particular, if $s=1$ we obtain (\ref{l2}).

Now, from (\ref{A0}), (\ref{l1}) and (\ref{l2}), we deduce that
\[
\begin{array}{rcl}
\Phi_g(\psi(Ad^G_{g^{-1}}\xi)(x))&=&\Phi ^T((g,0_{\frak g}),\Phi
^T((e,Ad^G_{g^{-1}}\xi),0_x))\\&=& \Phi ^T((g,0_{\frak g})\cdot
(e,Ad^G_{g^{-1}}\xi),0_x)\\&=&\Phi ^T((e,\xi)\cdot (g,0_{\frak
g}),0_x)= \Phi ^T((e,\xi),\Phi ^T((g,0_{\frak g}),0_{x})).
\end{array}
\]
On the other hand, using \eqref{l1} (with $\xi=0_{\frak g}$), it
follows that $\Phi ^T ((g,0_{\frak g}),0_x)=0_{\phi _g(x)}$. In
addition, from \eqref{l2} (with $g=e$), we obtain that $\Phi ^T
((e,\xi),0_{\phi _g(x)})=\psi (\xi )(\phi _g(x))$. This proves
\eqref{eqlemma} for $g=\exp _G(\eta )$. Finally, using that $G$ is
connected, we conclude that \eqref{eqlemma} holds for all $g\in
G$.

\bigskip
{\it Second step:} Now, we suppose that $G$ is a connected Lie
group with Lie algebra ${\frak g}.$ Denote by $\widetilde{G}$ the
universal covering of $G$ and by $\widetilde{\frak g}$ its
corresponding Lie algebra. Then, the covering projection
$p:\widetilde{G}\to G$ is a local isomorphism of Lie groups and
the map
\[
\widetilde{\Phi}:\widetilde{G}\times A\to A,\;\;\;
\widetilde{\Phi}(\widetilde{g},a_x)=\Phi(p(\widetilde{g}),a_x)
\]
is an action of $\widetilde{G}$ over $A$ by complete lifts with respect to the Lie algebra anti-morphism
 $\widetilde{\psi}=\psi\circ T_ep:\widetilde{\frak g}\to \Gamma(A).$  So,
for all $g\in G$, $x\in M$ and $\xi\in {\frak g}$ there are
$\widetilde{g}\in \widetilde{G}$ and
$\widetilde{\xi}\in\widetilde{\frak g}$ such that
\[
p(\widetilde{g})=g \mbox{ and }
(T_{\widetilde{e}}p)(\widetilde{\xi})=\xi.
\]

Here, $\widetilde{e}$ is the identity element of $\widetilde{G}$.
Therefore, using the first step 
\[
\widetilde{\Phi}_{\widetilde{g}}(\widetilde{\psi}(Ad^{\widetilde{G}}_{\widetilde{g}^{-1}}\widetilde{\xi})(x))=
{\psi}({\xi})({\phi}_{{g}}(x)).\] Finally, since 
$(T_{\widetilde{e}}p)(Ad^{\widetilde{G}}_{\widetilde{g}^{-1}}\widetilde{\xi})=Ad^G_{g^{-1}}\xi$,
then we obtain (\ref{eqlemma}).
\end{proof}

As we previously claimed, from an action of $G$ on $A$ by complete
lifts, we can define an affine action of $TG$ on $A$ as it is
described in the following theorem.
\begin{theorem}\label{induced-TG-action}
Let $\Phi:G\times A\to A$ be an action of the  connected Lie group $G$ by
complete lifts on the Lie algebroid $A$ with respect to
$\psi:{\frak g}\to \Gamma(A).$ Then,
\begin{equation}\label{TG-global}
\Phi^T:(G\times {\frak g})\times A\to A,\;\;\;
\Phi^T((g,\xi),a_x)=\Phi_g(a_x + \psi(\xi)(x))
\end{equation}
defines an affine action of $TG\cong G\times {\frak g}$ over $A$.
Moreover, if $(\xi,\eta)\in {\frak g}\times {\frak g},$ its 
infinitesimal generator $(\xi,\eta)_A$  with
respect to the action $\Phi^T$ is
\begin{equation}\label{TG-infinitesimal}
(\xi,\eta)_A=(\psi(\xi))^c + (\psi(\eta))^v.
\end{equation}
\end{theorem}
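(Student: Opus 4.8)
The plan is to verify directly that the map $\Phi^T$ defined by \eqref{TG-global} satisfies the axioms of a left action of the Lie group $G\times{\frak g}$ (with the multiplication \eqref{A0}) on $A$, and then to compute the infinitesimal generators and match them with $\psi^{cv}(\xi,\eta)=(\psi(\xi))^c+(\psi(\eta))^v$. First I would check the identity axiom: since $(e,0_{\frak g})$ is the unit of $G\times{\frak g}$ and $\psi(0_{\frak g})=0$, we get $\Phi^T((e,0_{\frak g}),a_x)=\Phi_e(a_x)=a_x$. For the composition axiom I would compute, using \eqref{TG-global} twice,
\[
\Phi^T\bigl((g,\xi),\Phi^T((g',\xi'),a_x)\bigr)=\Phi_g\bigl(\Phi_{g'}(a_x+\psi(\xi')(x))+\psi(\xi)(\phi_{g'}(x))\bigr).
\]
Since each $\Phi_{g'}$ is a vector bundle automorphism it is linear on fibers, so this equals $\Phi_{gg'}\bigl(a_x+\psi(\xi')(x)+\Phi_{g'^{-1}}(\psi(\xi)(\phi_{g'}(x)))\bigr)$. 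By Proposition \ref{prop3.2} applied to $g'$ (with $\xi$ replaced by $Ad^G_{g'}\xi$) one has $\Phi_{g'^{-1}}(\psi(\xi)(\phi_{g'}(x)))=\psi(Ad^G_{(g')^{-1}}\xi)(x)$, hence the inner argument is $a_x+\psi\bigl(\xi'+Ad^G_{(g')^{-1}}\xi\bigr)(x)$, using linearity of $\psi$. Comparing with \eqref{A0}, this is precisely $\Phi^T\bigl((g,\xi)\cdot(g',\xi'),a_x\bigr)$, which establishes that $\Phi^T$ is a left action. That it is an \emph{affine} action is clear from the formula: for fixed $(g,\xi)$ the map $a_x\mapsto\Phi_g(a_x+\psi(\xi)(x))$ is an affine map between the fibers $A_x$ and $A_{\phi_g(x)}$, being the composition of a translation and the linear isomorphism $\Phi_g$.

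For the infinitesimal generator, I would fix $(\xi,\eta)\in{\frak g}\times{\frak g}$ and a curve in $G\times{\frak g}$ through the identity with that velocity, the natural choice being $t\mapsto(\exp_G(t\xi),t\eta)$, whose derivative at $t=0$ is $(\xi,\eta)$ (under the identification $TG\cong G\times{\frak g}$ used in the excerpt). Then
\[
(\xi,\eta)_A(a_x)=\frac{d}{dt}\Big|_{t=0}\Phi^T\bigl((\exp_G(t\xi),t\eta),a_x\bigr)=\frac{d}{dt}\Big|_{t=0}\Phi_{\exp_G(t\xi)}\bigl(a_x+t\,\psi(\eta)(x)\bigr).
\]
I would split this derivative by the Leibniz rule into two terms: differentiating in the group parameter while freezing the translation gives $\frac{d}{dt}|_{t=0}\Phi_{\exp_G(t\xi)}(a_x)$, which by the definition of an action by complete lifts is exactly $(\psi(\xi))^c(a_x)$; differentiating in the fiber-translation parameter while freezing $g=e$ gives $\frac{d}{dt}|_{t=0}(a_x+t\,\psi(\eta)(x))=\psi(\eta)(x)^v_{a_x}=(\psi(\eta))^v(a_x)$. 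Adding these yields \eqref{TG-infinitesimal}.

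The main obstacle, and the only step requiring care, is the composition axiom: naively composing the two affine maps produces a cross-term $\Phi_{g'^{-1}}(\psi(\xi)(\phi_{g'}(x)))$ which must be recognized, via the equivariance identity \eqref{eqlemma} of Proposition \ref{prop3.2}, as $\psi(Ad^G_{(g')^{-1}}\xi)(x)$ — this is precisely what makes the $Ad^G_{(g')^{-1}}\xi$ term in the $TG$-multiplication \eqref{A0} appear and closes the computation. (One should also note that the Leibniz-rule splitting of the generator is legitimate because $\Phi^T((g,\xi),a_x)$ is jointly smooth, and the two partial derivatives are computed against the two factors of the product curve; alternatively one can invoke the already-established fact from the first step of Proposition \ref{prop3.2}'s proof that on a simply connected cover $\psi^{cv}$ integrates to $\Phi^T$, but the direct Leibniz computation is cleaner and works for any connected $G$.)
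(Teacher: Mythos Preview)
Your proof is correct and follows essentially the same approach as the paper: both verify the action axiom by a direct computation that hinges on the equivariance identity \eqref{eqlemma} from Proposition \ref{prop3.2}, and both split the infinitesimal generator into its complete-lift and vertical-lift parts. The only cosmetic difference is that the paper obtains the generator by writing $(\xi,\eta)_A=(\xi,0_{\frak g})_A+(0_{\frak g},\eta)_A$ via the two one-parameter subgroups $s\mapsto(\exp_G(s\xi),0_{\frak g})$ and $s\mapsto(e,s\eta)$, whereas you use the single curve $t\mapsto(\exp_G(t\xi),t\eta)$ and a Leibniz-rule argument; these are equivalent, since the infinitesimal generator depends only on the tangent vector at the identity.
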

\begin{proof}

Equation (\ref{eqlemma}) allows to prove that $\Phi^T:(G\times
{\frak g})\times A\to A$ is an affine action of $TG\cong G\times
{\frak g}$ over $A$. In fact,
\[
\begin{array}{rcl}
\Phi^T((g,\xi)\cdot(h,\eta),a_x)&=&\Phi^T((g\cdot h,\eta +
Ad^G_{h^{-1}}\xi),a_x)=\Phi_{gh}(a_x) + \Phi_{gh}(\psi(\eta +
Ad^G_{h^{-1}}\xi)(x))\\&=&\Phi_g(\Phi_h(a_x) +
\Phi_h(\psi(\eta)(x) + \psi(Ad^G_{h^{-1}}\xi)(x)))\\&=&
\Phi_g(\Phi_h(a_x) + \Phi_h(\psi(\eta)(x))) +
\Phi_g(\psi(\xi)(\phi_h(x)))\\&=&
\Phi^T((g,\xi),\Phi^T((h,\eta),a_x)) \end{array}
\]
and
\[
\Phi^T((e,0_{\frak g}),a_x)=\Phi_e(a_x) + \Phi_e(\psi(0_{\frak
g})(x))=a_x.
\]
Moreover, using the one-parameter subgroups defined in (\ref{l3})
and (\ref{pi'}), one may conclude easily that the infinitesimal
generator $(\xi,\eta)_A$ of $(\xi,\eta)\in {\frak g}\times {\frak
g}$ with respect $\Phi^T$ is
\[ (\xi,\eta)_A=(\xi,0_{\frak g})_A + (0_{\frak
g},\eta)_A=(\psi(\xi))^c + (\psi(\eta))^v.\]
\end{proof}

Note that, under  the same  hypotheses as in Theorem \ref{induced-TG-action},  the action $\Phi:G\times A\to A$ is free and proper if and only if the corresponding action $\phi:G\times M\to M$ on $M$  is free and proper. Moreover, if $\Phi:G\times A\to A$ is free and proper then so is $\Phi^T:TG\times A\to A$. 

On the other hand, we recall that the space of orbits $N/H$ of a free and proper action of a Lie group $H$ on a manifold $N$ is a quotient differentiable manifold and the canonical projection $\pi:N\to N/H$ is a surjective submersion (see \cite{AbMa87}). With this, we prove a preliminary reduction result.

\begin{theorem}\label{reduction}
Let $(A,\lcf\cdot,\cdot\rcf,\rho)$ be a Lie algebroid on $M$ and $\Phi:G\times A\to A$ a 
free and proper action of a connected Lie group $G$ on $A$ by complete lifts with respect to the Lie algebra anti-morphism 
$\psi:{\mathfrak g}\to \Gamma(A).$ Then, $A/TG$ is a Lie algebroid over $M/G$ and the projection $\widetilde{\pi}:A\to A/TG$ is a Lie algebroid epimorphism. 
\end{theorem}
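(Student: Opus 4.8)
The plan is to reduce the statement to Proposition \ref{p-reduction} applied to $\widetilde{\pi}:A\to A/TG$. First I would record that, by the observations following Theorem \ref{induced-TG-action}, freeness and properness of $\Phi$ pass to $\phi:G\times M\to M$ and to the affine action $\Phi^T:TG\times A\to A$, so that $A/TG$ is a quotient manifold and $\widetilde{\pi}$ a surjective submersion over the quotient submersion $\pi:M\to M/G$. The first real task is to upgrade this to a vector bundle structure. By Remark \ref{phi} each $\psi_x:{\mathfrak g}\to A_x$ is injective, so a basis $\{\xi_i\}$ of ${\mathfrak g}$ yields pointwise independent sections $e_i:=\psi(\xi_i)$ spanning a subbundle $V\subset A$ of rank $\dim{\mathfrak g}$ with $\Gamma(V)=\mathrm{span}_{C^\infty(M)}\{e_i\}$; equation \eqref{eqlemma} gives $\Phi_g(V_x)=V_{\phi_g(x)}$, so $V$ is $G$-invariant. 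I would then pass to the quotient bundle $A/V\to M$, on which $\Phi$ descends to a free and proper action by vector bundle automorphisms, observe that the composite $A\to A/V\to (A/V)/G$ is invariant under $\Phi^T$ (the fibrewise translations $a_x\mapsto a_x+\psi(\xi)(x)$ project to the identity on $A/V$, and the $\Phi_g$ project to the $G$-action), and deduce a diffeomorphism $A/TG\cong (A/V)/G$ over $M/G$. This exhibits $A/TG$ as a vector bundle over $M/G$ with fibre $A_x/V_x$ over $\pi(x)$, and $\widetilde{\pi}$ as a vector bundle epimorphism with $\ker\widetilde{\pi}=V$, hence $\Gamma(\ker\widetilde{\pi})=\Gamma(V)$.

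Next I would characterise the $\widetilde{\pi}$-projectable sections: $X\in\Gamma(A)$ is $\widetilde{\pi}$-projectable if and only if $\lcf\psi(\xi),X\rcf\in\Gamma(V)$ for all $\xi\in{\mathfrak g}$. From the fibrewise picture above, $\widetilde{\pi}\circ X$ is constant on the fibres of $\pi$ exactly when the pushforward section $(\Phi_g)_*X$ and $X$ differ by a section of $V$ for every $g\in G$, and, because $\pi$ is a surjective submersion, this is precisely the condition that $\widetilde{\pi}\circ X$ descend to a smooth section of $A/TG$. Since the flow of the complete lift $\psi(\xi)^c$ is $\{\Phi_{\exp(t\xi)}\}$, Proposition \ref{Poisson} identifies the derivative at $t=0$ of $t\mapsto(\Phi_{\exp(t\xi)})_*X$ with the Lie derivative $\mathcal{L}^A_{\psi(\xi)}X=\lcf\psi(\xi),X\rcf$; combining the standard flow--Lie-derivative identity with the fact that each $(\Phi_g)_*$ preserves $\Gamma(V)$ (because $V$ is $G$-invariant) and with the connectedness of $G$, the global condition ``$(\Phi_g)_*X-X\in\Gamma(V)$ for all $g$'' is equivalent to the infinitesimal one ``$\lcf\psi(\xi),X\rcf\in\Gamma(V)$ for all $\xi$''.

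With this in hand, verifying the hypotheses of Proposition \ref{p-reduction} is short. For condition (ii), if $X$ is $\widetilde{\pi}$-projectable and $Z=f^ie_i\in\Gamma(V)$, the Leibniz rule \eqref{IL} gives $\lcf X,Z\rcf=f^i\lcf X,e_i\rcf+\rho(X)(f^i)e_i$, whose second summand is obviously in $\Gamma(V)$ while $\lcf X,e_i\rcf=-\lcf\psi(\xi_i),X\rcf\in\Gamma(V)$ by the previous paragraph; hence $\lcf X,Z\rcf\in\Gamma(\ker\widetilde{\pi})$. For condition (i), if $X,Y$ are $\widetilde{\pi}$-projectable, the Jacobi identity gives
$$\lcf\psi(\xi),\lcf X,Y\rcf\rcf=\lcf\lcf\psi(\xi),X\rcf,Y\rcf+\lcf X,\lcf\psi(\xi),Y\rcf\rcf ,$$
and both terms on the right lie in $\Gamma(V)$ by condition (ii) (applied with the $\Gamma(V)$-section $\lcf\psi(\xi),X\rcf$, respectively $\lcf\psi(\xi),Y\rcf$, in the appropriate slot), so $\lcf X,Y\rcf$ is $\widetilde{\pi}$-projectable by the characterisation just established. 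Proposition \ref{p-reduction} then supplies a Lie algebroid structure on $A/TG$ over $M/G$ for which $\widetilde{\pi}$ is a Lie algebroid epimorphism, which is the claim.

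The main obstacle is the middle part of the argument: on the one hand the bookkeeping that turns $A/TG$ into a genuine vector bundle (rather than merely an affine bundle) over $M/G$ with the stated kernel, and on the other the passage, via Proposition \ref{Poisson} and the connectedness of $G$, from the global $G$-invariance-modulo-$V$ description of projectable sections to its infinitesimal reformulation in terms of Lie brackets. Once that characterisation is available, the application of Proposition \ref{p-reduction} is a routine computation with \eqref{IL} and the Jacobi identity.
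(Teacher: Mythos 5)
Your proof is correct, but it follows a genuinely different route from the paper's. The paper performs the reduction in two stages: it first invokes the known fact (from \cite{IMMMP, Ma}) that $A/G$ is a Lie algebroid over $M/G$ whose sections are the $G$-invariant sections $\Gamma(A)^G$, then shows that $\psi(\mathfrak g)/G$ is an ideal of constant rank in $A/G$ --- the key computation being that $\lcf\psi(\xi),X\rcf=0$ for every \emph{$G$-invariant} $X$, whence $\lcf X,Y\rcf=\sum\rho(X)(Y^i)\psi(\xi_i)$ for $Y=\sum Y^i\psi(\xi_i)$ a $G$-invariant section of $\psi(\mathfrak g)$ --- and finally passes to $(A/G)/(\psi(\mathfrak g)/G)$ and identifies it with $A/TG$ via the same chain of vector bundle isomorphisms \eqref{tresformas} that you establish (you use the composition in the order $(A/\psi(\mathfrak g))/G$, the paper in the order $(A/G)/(\psi(\mathfrak g)/G)$). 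You instead apply the quotient criterion of Proposition \ref{p-reduction} directly to $\widetilde{\pi}:A\to A/TG$ in a single step. The price is that you must work with the strictly larger class of $\widetilde{\pi}$-projectable sections, for which only $\lcf\psi(\xi),X\rcf\in\Gamma(\psi(\mathfrak g))$ holds rather than $\lcf\psi(\xi),X\rcf=0$, and you must justify the equivalence between the global condition ``$(\Phi_g)_*X-X\in\Gamma(V)$ for all $g$'' and its infinitesimal version; your sketch of that equivalence (differentiate for one direction; for the other, use $\tfrac{d}{ds}(\Phi_{\exp(s\xi)})_*X=(\Phi_{\exp(s\xi)})_*\lcf\psi(\xi),X\rcf\in\Gamma(V)$ by $G$-invariance of $V$, integrate fibrewise, and use connectedness of $G$) is the right argument and does go through. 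What your approach buys is that it avoids invoking the quotient algebroid $A/G$ and the notion of ideal altogether, reducing everything to the Leibniz rule \eqref{IL} and the Jacobi identity; what the paper's approach buys is the explicit description \eqref{ca} of the reduced bracket in terms of $G$-invariant representatives, which is used repeatedly later in the paper.
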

\begin{proof}
Since that $\Phi:G\times A\to A$ is an action by Lie algebroid automorphisms then $A/G$ is a Lie algebroid over $M/G$ with vector bundle projection $\tau/G: A/G\to M/G$ (see \cite{IMMMP, Ma}). The space of sections of this vector bundle may be identified with the one of $G$-invariant sections $\Gamma(A)^G$ of $A$. Under this identification the bracket and the anchor map of the Lie algebroid structure on $A/G$ is just 
$$\lcf X, Y\rcf_{A/G}= \lcf X, Y\rcf, \;\;\;\;\; \rho_{A/G}(X({\pi}(x)))=T_x{\pi}(\rho(X(x))$$
for all $X,Y\in \Gamma(A)^G$ and $x\in M$,  where ${\pi}:M\to M/G$ is the quotient projection corresponding to the induced action $\phi:G\times M\to M.$  

On the other hand,  using  that $\phi:G\times M\to M$  is free, then  we have that  $\psi_x:{\frak g}\to A_x,$ is injective (see Remark \ref{phi}). Thus,  for
all $x\in M,$ we have that 
\[
\dim \{\psi_x(\xi)/\xi\in {\frak g}\}=\dim {\frak
g},\mbox{ for all }x\in M.
\]
Therefore, since $\psi:{\frak g}\to \Gamma(A)$ is a
Lie algebra anti-morphism, we deduce that
\[
\psi({\frak g})=\bigcup_{x\in M}\{\psi_x(\xi)/\xi\in {\frak g}\}
\]
is a Lie subalgebroid of $A$ over $M$.  Moreover,  from Proposition \ref{prop3.2}, we
have that the Lie group $G$ acts by Lie algebroid
automorphisms on  $\psi(\frak g)$. So,  one
may  induce a Lie algebroid structure on the quotient vector
bundle $\psi (\frak g )/G$ such that  $\psi  (\frak g )/G$ is a Lie subalgebroid of
$A /G$. Now, we will show that it is also an ideal.

\medskip If $X\in \Gamma(A)$ is $G$-invariant then $X\circ
\phi_g=\Phi_g\circ X$,  for all $g\in G$. Thus, the flow
$\Upsilon _t(\xi):A \to A$ of the vector field $(\psi 
(\xi))^c$ and the flow $\varphi _t(\xi):M\to
M$ of $(\rho \circ \psi )(\xi)$ satisfy the following property 
\[
\Upsilon _t(\xi)\circ X=X\circ \varphi _t(\xi),\mbox{ for all
 }t\in \R.
\]
Equivalently,
\[
\widehat{X}\circ \Upsilon _t(\xi)^*=\widehat{X}
\]
where  $\Upsilon _t(\xi)^*$ is the dual morphism of $\Upsilon _t(\xi):
A \to A$ and $\widehat{X}$ is the linear function associated
with the section $X$.

Therefore, we have that
\[
\frac{d}{dt}_{|t=0}(\widehat{X}\circ \Upsilon _t(\xi)^*)=0.
\]
Since $\Upsilon _t(\xi)^*$ is the flow of the vector field
$\psi(\xi)^{*c}$, the previous equation is equivalent to the
relation
\begin{equation}\label{corchete0}
\lcf\psi(\xi),X\rcf =0,\;\;\mbox{for all }\xi\in {\frak
g}.
\end{equation}
On the other hand, let $Y$ be a $G$-invariant section of
$\psi({\frak g})$ and $\{\xi_i\}$  a basis of ${\frak
g}$. Then,
\[
Y=\sum Y^i \psi  (\xi _i),
\]
with $Y^i$  real  functions on $A.$ Moreover, using  Proposition \ref{prop3.2} and the fact that $\psi_x$ is injective, we have that
\begin{equation}\label{*}
Y^i\circ \phi _g = (Ad^G)_j^i(g)Y^j,
\end{equation}
where $Ad^G_g\xi _i=(Ad^G)_i^j(g)\xi _j$. Hence, from
\eqref{corchete0},
\[
\lcf X,Y\rcf = \sum \rho (X)(Y^i)\psi (\xi _i).
\]
Now, using that $X$ and $Y$ are $G$-invariant sections, we
obtain that $\lcf X,Y\rcf$ is a $G$-invariant section and, as
a consequence, $\lcf X, Y\rcf$  is a $G$-invariant section of
the vector bundle $\psi (\frak g )\to M$. Thus,
$\psi (\frak g )/G$ is indeed an ideal of
$A/G$ of constant rank (since the action $\Phi$ is free)  and therefore, the quotient vector bundle
$(A/G)/(\psi({\frak g})/G)$ admits a Lie
algebroid structure over $M/G.$

\medskip
Finally, we have  that this vector bundle is isomorphic to $A/TG$ and, thus, a Lie algebroid structure on $A/TG$ is induced in such a
way that this isomorphism is a Lie algebroid isomorphism. In fact,  using (\ref{eqlemma}), one may prove that  $\Phi$ induces a free and proper action $\bar{\Phi}:G\times A/\psi({\mathfrak g})\to A/\psi({\mathfrak g})$   on $A/\psi({\mathfrak g})$ such that the projection $\widetilde{\Psi}_1:A\to A/\psi({\mathfrak g})$ is equivariant, with respect to the action $\Phi^T$ and $\bar{\Phi}$. So,  $\widetilde{\Psi}_1$ induces  a smooth  map $\Psi_1:A/TG\to (A/\psi({\mathfrak g}))\big/G$. Moreover, one easily proves that $\Psi_1$ is a one-to-one correspondence.  On the other hand,  the map 
$$\Psi_2:(A/\psi({\mathfrak g}))\big/G\to (A/G)\big/ (\psi({\mathfrak g})/G),\;\;\, [\widetilde{\Psi}_1(a)]\to \widetilde{\Psi}_2([a])$$ is bijective, where $\widetilde{\Psi}_2:A/G\to (A/G)\big/( \psi({\mathfrak g})/G)$ is the corresponding quotient map. Consequently the reduced vector bundle $A/TG\to M/G$ is isomorphic to the vector bundles 
\begin{equation}\label{tresformas}
(A/\psi({\mathfrak g}))\big/G\to M/G\,\,\,\;\mbox{ and }\;\;\;\; (A/G)\big/ (\psi({\mathfrak g})/G)\to M/G
\end{equation}

Note that, using the above isomorphisms,  the space of sections of the vector bundle $A/TG\to M/G$ may be identified   with the quotient space 
$$\Gamma(A)^G/\Gamma(\psi({\frak g}))^G$$
where $\Gamma (A)^{G}$ (respectively,
$\Gamma ( \psi (\frak g ))^{G}$) is the space of
$G$-invariant sections on $A$ (respectively, $\psi({\frak
g})$). Under this identification the Lie algebroid structure $(\lcf \cdot,
\cdot \rcf _{A/TG},\rho _{A/TG})$ on $A/TG$ is characterized by
\begin{equation}\label{ca}
\begin{array}{l}
\lcf [X],[Y]\rcf _{A/TG} = [\lcf X, Y\rcf ],\\[8pt]
\rho _{A/TG }([X])\circ {\pi}=T{\pi} \circ \rho (X),\quad \mbox{ for
  }  X, Y\in \Gamma (A)^{G}.
\end{array}
\end{equation}
This implies that the canonical projection $\widetilde\pi:A\to A/TG$ is a Lie algebroid epimorphism.

\end{proof}

\begin{examples}{\rm 

$(i)$ In the case when $A = TM$ and $\Phi=T\phi$ is the tangent lift of the action $\phi:G\times M\to M$, the reduced Lie algebroid $TM/TG$ from the previous theorem is isomorphic to $T(M/G)$ with  its standard Lie algebroid structure. 

\medskip 
$(ii)$ For  the case $A={\mathfrak g}\times TM$ from $(ii)$ in Examples \ref{examples}, we obtain that the reduced Lie algebroid  $({\mathfrak g}\times TM)/TG$ with respect to the action 
$\Phi^T: (G\times {\mathfrak g})\times ({\mathfrak g}\times TM)\to {\mathfrak g}\times TM$ given by 
$$\Phi^T((g,\bar{\xi}), (\xi,v_x))=(Ad^G_{g}(\xi - \bar{\xi}), T_x\phi_g(v_x+\bar\xi_M(x)))$$
can be identified with the Atiyah Lie algebroid $TM/G$ induced by the principal bundle $\pi:M\to M/G$. 

We recall the construction of this  last Lie algebroid. Firstly, we denote by $\tau:TM\to M$ the projection of $TM$ on $M$ which is equivariant with respect to the tangent lift action $T\phi:G\times TM\to TM$ and  $\phi:G\times M\to M.$ The sections of the induced vector bundle $\tau/G:TM/G\to M/G$ can be identified with the $G$-invariant vector fields on $M$. Moreover,  the set of $G$-invariant vector fields  is closed with respect to the Lie bracket of vector fields. Using this fact, one can define the Lie algebroid structure  $(\lcf\cdot,\cdot\rcf_{TM/G}, \rho_{TM/G})$  on $TM/G$ by 
$$\lcf X,Y\rcf_{TM/G}=[X,Y],\;\,\; \rho_{TM/G}(X(x))=T_x\pi(X(x)),$$
for $X,Y$ $G$-invariant vector fields of $M$ and $x\in M.$ The corresponding Lie algebroid is known as {\it Atiyah Lie algebroid}  (see \cite{LMM}).

Now, we have  the following vector bundle epimorphism 
\begin{equation}\label{F}F:({\mathfrak g}\times TM)/TG\to TM/G,\;\;\; F([(\xi,v_x)]_{TG})=[v_x+\xi_M]_G.\end{equation}
Note that $F$ is well-defined. Indeed, for all $\xi,\bar{\xi}\in {\mathfrak g}$, $g\in G$ and $v_x\in T_xM$ one has that 
$$
\begin{array}{rcl}
F([Ad_g^G(\xi - \bar{\xi}), T_x\phi_g(v_x + \bar{\xi}_M(x))]_{TG})&=
&[T_x\phi_g(v_x + \bar{\xi}_M(x))+(Ad_g^G(\xi - \bar{\xi}))_M(\phi_g(x))]_G\\
&=&[T_x\phi_g(v_x + \bar{\xi}_M(x))+T_x\phi_g(\xi_M(x) - \bar{\xi}_M(x))]_G\\
&=&[v_x+\xi_M(x)]_G.
\end{array}$$
Moreover, since
\begin{equation}\label{xi=0}
[(\xi,v_x)]_{TG}=[(0,v_x+\xi_M(x))]_{TG},\;\;\; \mbox{ for all } v_x\in T_xM \mbox{ and } \xi\in {\mathfrak g},
\end{equation} 
we deduce that $F$ is a vector bundle isomorphism. 

On the other hand, using (\ref{xi=0}), we obtain that if $\{X_i\}$ is a local basis of $G$-invariant vector fields on $M$ then $\{[(0,X_i)]_{TG}\}$ is a local base  of  $\Gamma(({\mathfrak g}\times TM)/TG).$  This fact allows to prove  that $F$ is a Lie algebroid isomorphism.  

}\end{examples}

\subsection{Reduction of Symplectic Lie algebroids}\label{section3.2}
A Lie
algebroid $(A,\lcf\cdot,\cdot\rcf,\rho)$ on the manifold $M$ is {\it symplectic-like } if
there is a nondegenerate $2$-section $\Omega\in
\Gamma(\wedge^2A^*)$ on $A^*$ which is closed, i.e.
$d^A\Omega=0.$ In such a case, for each function $f:M\to {\mathbb R}$ on $M$, we have the {\it Hamiltonian section } on $A$ which is characterized  by
$$i_{{\mathcal H}^\Omega_f}\Omega=d^Af.$$

The base space $M$ of a symplectic-like Lie algebroid $A$ is a Poisson manifold, where the Poisson bracket on $M$ is given by
\begin{equation}\label{Pois}
\{f,g\}=\rho({\mathcal H}^\Omega_g)(f)\;\;\;f,g\in C^\infty(M).
\end{equation}
(see \cite{LMM, K, MX}). 

Note that if $f\in C^\infty(M)$ then the Hamiltonian vector field of $f$ with respect to the Poisson structure on $M$ is $\rho({\mathcal H}_f^\Omega)$. Thus, the solutions of Hamilton's equations for $f$ are the integral curves of the vector field $\rho({\mathcal H}_f^\Omega).$

Now, in the rest of this section, we suppose that $(A,\lcf\cdot,\cdot\rcf,\rho,\Omega)$ is a
symplectic-like Lie algebroid over $M$, that $\Phi:G\times A\to A$ is an  action of a connected
Lie group $G$ on  $A$ by complete lifts with respect to the Lie algebra anti-morphism $\psi:{\mathfrak g}\to \Gamma(A)$ and that 
$J:M\to {\frak g}^*$ is an equivariant smooth map, i.e., 
$$
Coad^G_g(J(x))=J(\phi_g(x)),\;\;\; \forall x\in M,\;\;\;\forall
g\in G.$$

 The action $\Phi$ is said to be a {\it Hamiltonian action with momentum map } $J:M\to {\frak g}^*$ if
\begin{equation}\label{3,18'}\Phi_g^*(\Omega)=\Omega \mbox{ and } i_{\psi(\xi)}\Omega=d^A {J}_\xi, \mbox{ for all } g\in G \mbox{ and } \xi\in {\mathfrak g},\end{equation}
where ${J}_\xi$ is the real function on $M$ given by
\begin{equation}\label{e2}
{J}_\xi(x)=J(x)(\xi), \mbox{ for  } x\in M.
\end{equation}

Note that the previous condition implies that 
\begin{equation}\label{3.19'}
{\mathcal L}_{\psi(\xi)}^A\Omega=0.
\end{equation}

Now, let  $J^T:A\to ({\frak g}\times {\frak g})^*\cong
{\frak g}^*\times {\frak g}^*$ be the map given by
\begin{equation}\label{JA}
J^T(a)=((TJ\circ \rho )(a) , J(\tau(a))).
\end{equation}

Then we have

\begin{lemma}\label{induced-Poisson-action}
The map $J^T:A\to {\mathfrak g}^*\times {\mathfrak g}^*$ is equivariant for the action $\Phi^T:TG\times A\to A$.
\end{lemma}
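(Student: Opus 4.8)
The plan is to verify the equivariance identity $J^T(\Phi^T_{(g,\xi)}(a_x)) = Coad^{TG}_{(g,\xi)}(J^T(a_x))$ by unwinding both sides using the explicit formulas \eqref{TG-global} for $\Phi^T$ and \eqref{coadjunto-TG} for $Coad^{TG}$, together with \eqref{JA} defining $J^T$. Since $TG \cong G \times \mathfrak{g}$ is generated (as a connected group) by elements of the form $(g, 0_\mathfrak{g})$ and $(e, \xi)$, it suffices to check equivariance separately on these two types of elements; the general case follows because $J^T$ intertwines a composition of the two and $Coad^{TG}$ is a representation.

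First I would treat the purely group part $(g,0_\mathfrak{g})$. Here $\Phi^T_{(g,0)}(a_x) = \Phi_g(a_x)$, so the second component of $J^T$ transforms as $J(\tau(\Phi_g(a_x))) = J(\phi_g(\tau(a_x))) = Coad^G_g(J(\tau(a_x)))$ by equivariance of $J$. For the first component, $(TJ \circ \rho)(\Phi_g(a_x))$: since $\Phi_g$ covers $\phi_g$ and is a vector bundle morphism with $\rho \circ \Phi_g = T\phi_g \circ \rho$ (this holds because $\psi(\xi)^c$ is $\tau$-projectable onto $\rho(\psi(\xi)) = \xi_M$, so each $\Phi_g$ is a Lie algebroid automorphism covering $\phi_g$), we get $(TJ\circ\rho)(\Phi_g(a_x)) = (TJ \circ T\phi_g)(\rho(a_x)) = T(J\circ\phi_g)(\rho(a_x)) = T(Coad^G_g\circ J)(\rho(a_x)) = Coad^G_g((TJ\circ\rho)(a_x))$, using again equivariance of $J$ and linearity of $Coad^G_g$ on $\mathfrak{g}^*$ (so that its tangent map is itself). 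Comparing with \eqref{coadjunto-TG} for $\xi = 0$ gives exactly $Coad^{TG}_{(g,0)}(J^T(a_x))$.

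Next I would treat the translation part $(e,\xi)$, where $\Phi^T_{(e,\xi)}(a_x) = a_x + \psi(\xi)(x)$. The second component is unchanged: $J(\tau(a_x + \psi(\xi)(x))) = J(x)$, matching the second component of $Coad^{TG}_{(e,\xi)}$. For the first component, $\rho(a_x + \psi(\xi)(x)) = \rho(a_x) + \rho(\psi(\xi)(x)) = \rho(a_x) + \xi_M(x)$, so I need $TJ(\rho(a_x) + \xi_M(x)) = TJ(\rho(a_x)) + coad^G_\xi(J(x))$. This is the key computational point: it follows from the compatibility condition \eqref{3,18'}, $i_{\psi(\xi)}\Omega = d^A J_\xi$, which controls $\rho(\mathcal H^\Omega$-type data), but more directly from the infinitesimal equivariance of $J$ — differentiating $Coad^G_g(J(x)) = J(\phi_g(x))$ at $g = e$ in the direction $\xi$ gives $coad^G_\xi(J(x)) = TJ(\xi_M(x))$ (for $\xi_M(x) \in T_xM$). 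Combining this with $TJ$ being fiberwise linear yields the claim. This infinitesimal equivariance step is where I expect the only real care to be needed, since it requires interpreting $TJ$ on a tangent vector at $x$ correctly and using that $J$ is equivariant (which the hypotheses grant).

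Finally I would assemble: for a general $(g,\xi) \in G \times \mathfrak{g}$, write $(g,\xi) = (g, 0_\mathfrak{g})\cdot(e, Ad^G_{g^{-1}}\xi)$ in the group law \eqref{A0} (or an analogous decomposition), apply the two cases above in turn, and check the bookkeeping matches \eqref{coadjunto-TG}; alternatively, one observes that the set of $(g,\xi)$ for which equivariance holds is a subgroup of $TG$ containing both families, hence all of the connected group $TG$. Since $G$ is assumed connected, so is $TG \cong G \times \mathfrak{g}$, and the argument closes. The main obstacle, as noted, is the correct handling of the infinitesimal equivariance of $J$ in the translation direction; everything else is a direct substitution into the stated formulas.
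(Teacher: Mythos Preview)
Your proof is correct and uses precisely the same two key identities as the paper: the infinitesimal equivariance $T_xJ(\xi_M(x))=coad^G_\xi(J(x))$ and the relation $TJ\circ\rho\circ\Phi_g=Coad^G_g\circ TJ\circ\rho$ coming from $\Phi_g$ being a Lie algebroid morphism over $\phi_g$. The only difference is organizational: the paper carries out a single direct computation of $Coad^{TG}_{(g,\xi)}(J^T(a))$ for general $(g,\xi)$, whereas you split into the cases $(g,0_{\mathfrak g})$ and $(e,\xi)$ and then combine. One small slip: the factorization you wrote, $(g,\xi)=(g,0_{\mathfrak g})\cdot(e,Ad^G_{g^{-1}}\xi)$, is incorrect under \eqref{A0}; the right (and simpler) one is $(g,\xi)=(g,0_{\mathfrak g})\cdot(e,\xi)$, and with this your assembly step goes through (your subgroup argument is also fine as stated).
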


\begin{proof}
Let $(g,\xi )\in G\times \frak g\cong TG$ and $a\in A_x$. Since
$J$ is equivariant, we have that
\begin{equation}\label{lema-equiv1}
T_xJ (\rho ( \psi (\xi )(x)))=coad^G_\xi (J(x)), \quad \mbox{for
}\xi\in \frak g\mbox{ and } x\in M.
\end{equation}

Moreover, using that $\Phi _g$ is a Lie algebroid morphism over
$\phi _g$ and that $J$ is equivariant, we obtain
\begin{equation}\label{lema-equiv2}
TJ\circ \rho \circ \Phi _g = TJ \circ T\phi _g\circ \rho = Coad^G
_g\circ TJ\circ \rho.
\end{equation}
As a consequence, from (\ref{coadjunto-TG}), (\ref{JA}),
(\ref{lema-equiv1}) and (\ref{lema-equiv2}), we have that 
\begin{eqnarray*}
Coad ^{TG}_{(g,\xi )} (J^T(a))&=& Coad ^{TG}_{(g,\xi )} (TJ (\rho
(a)) ,J(\tau (a)))\\&=&(Coad^G_g(TJ\circ
\rho(a))+Coad^G_g(coad^G_\xi J(\tau (a))),Coad^G_g (J(\tau (a))))\\
&=& (Coad^G_g(TJ\circ \rho(a))+Coad^G_g(coad^G_\xi
J(\tau (a))),J (\phi _g(\tau (a))))\\
&=& ((TJ \circ \rho)( \Phi _g(a) ))+ (TJ\circ \rho)(\Phi _g(\psi
(\xi)(\tau(a)))), J (\tau (\Phi ^T _{(g,\xi )}(a))) )\\
&=& ((TJ\circ \rho )(\Phi ^T_{(g,\xi )}(a)),J(\tau(\Phi ^T_{(g,\xi
)}(a))))\\
&=& J^T (\Phi ^T_{(g,\xi )}(a)).
\end{eqnarray*}
Hence $J^T$ is equivariant with respect to $\Phi^T.$
\end{proof}

\begin{proposition}\label{prereduction}
Let  $(A,\lcf\cdot,\cdot\rcf,\rho,\Omega)$  be a symplectic-like Lie algebroid over $M$,
 $\Phi:G\times A\to A$  a Hamiltonian action of a connected Lie group $G$ on $A$ 
 with Lie algebra anti-morphism $\psi:{\mathfrak g} \to \Gamma(A)$  and  equivariant momentum map $J:M\to \frak g^*$. Let 
$\mu\in {\frak g}^*$ be a  regular value of $J$ such that  $T_x J\circ \rho : A_x \to T_{\mu}{\frak g}^*$ has  constant rank for all  $x\in J^{-1}(\mu).$ Then, 
\begin{enumerate}
\item $\pl$ is a Lie subalgebroid of $A$ over $J^{-1}(\mu).$
\item The restriction $\psi_\mu:{\mathfrak g}_\mu\to \Gamma(A)$ of $\psi$ to the isotropy algebra
${\frak g}_\mu$ of $\mu$ with respect to the coadjoint action
takes values in $\Gamma(\pl)$.
\item The isotropy Lie group $G_\mu$ of $\mu$ with respect to the
coadjoint action acts on $\pl$ by complete lifts with respect to
$\psi_\mu:{\frak g}_\mu\to \Gamma(\pl).$
\item The action of $G_\mu$ on the Lie subalgebroid $\pl$ induces
an affine action $\Phi^T_\mu:TG_\mu\times (J^T)^{-1}(0,0)\to (J^T)^{-1}(0,0).$
\end{enumerate}
\end{proposition}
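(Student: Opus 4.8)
The plan is to establish the four items in order, each building on the previous one, and to reduce everything to facts already available: the regularity hypothesis on $T_xJ\circ\rho$, the compatibility condition $i_{\psi(\xi)}\Omega = d^AJ_\xi$, the equivariance of $J^T$ (Lemma \ref{induced-Poisson-action}), and Theorem \ref{induced-TG-action}.

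First I would prove (i). Write $B := (J^T)^{-1}(0,\mu)$. By definition $J^T(a) = ((TJ\circ\rho)(a), J(\tau(a)))$, so $a\in B$ precisely when $\tau(a)\in J^{-1}(\mu)$ and $\rho(a)\in\ker(T_{\tau(a)}J)$. Hence $B$ is the kernel of the vector bundle morphism $TJ\circ\rho\colon A|_{J^{-1}(\mu)}\to T_\mu\g^*$ (a trivial bundle over $J^{-1}(\mu)$), which by the constant-rank hypothesis is a subbundle; its base $J^{-1}(\mu)$ is a submanifold since $\mu$ is a regular value. To see $B$ is a Lie subalgebroid one must check that the bracket of two sections of $B$ is again a section of $B$, i.e. that sections of $B$ are closed under $\lcf\cdot,\cdot\rcf$, and that $\rho$ maps $B$ into $TJ^{-1}(\mu)$. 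The anchor condition is immediate: a section $X$ of $B$ satisfies $\rho(X)(J_\xi) = d^AJ_\xi(X) = i_{\psi(\xi)}\Omega(X) = \Omega(\psi(\xi), X)$; but this does not obviously vanish, so instead one uses directly that $\rho(X)$ is tangent to the level set because $T(J_\xi)(\rho(X)) = dJ_\xi(\rho(X))$ and $\rho(X)\in\ker TJ$ by construction — that is exactly the defining condition $\rho(a)\in\ker T_{\tau(a)}J$. For the bracket, the key computation is that for $X,Y$ sections of $B$, $\rho(\lcf X,Y\rcf) = [\rho(X),\rho(Y)]$, and one must show this lies in $\ker TJ$ along $J^{-1}(\mu)$; this follows from the fact that $TJ\circ\rho$ restricted to $B$ is identically zero, differentiating along flows, together with the closedness of $\Omega$ — equivalently, one argues that the characteristic distribution of the symplectic-like structure behaves as in the classical case. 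This bracket-closure is the step I expect to be the main obstacle, and I would handle it by the standard symplectic argument transported to the algebroid: at each point the annihilator relations $i_{\psi(\xi)}\Omega = d^AJ_\xi$ identify $B_x$ with $(\psi(\g)_x)^{\Omega}$, the $\Omega$-orthogonal of $\psi(\g)_x$ inside $A_x$, and the $d^A$-closedness of $\Omega$ makes this orthogonal-complement distribution involutive exactly as $d\Omega=0$ does classically.

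Item (ii) is then quick: for $\xi\in\g_\mu$ we have $coad^G_\xi\mu = 0$, hence by \eqref{lema-equiv1} $T_xJ(\rho(\psi(\xi)(x))) = coad^G_\xi(J(x)) = 0$ for $x\in J^{-1}(\mu)$, so $\rho(\psi(\xi)(x))\in\ker T_xJ$; combined with $\tau(\psi(\xi)(x)) = x\in J^{-1}(\mu)$ this says exactly $\psi(\xi)(x)\in B$, so $\psi_\mu$ takes values in $\Gamma(B)$.

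For (iii), since $J^T$ is $\Phi^T$-equivariant (Lemma \ref{induced-Poisson-action}) and $(0,\mu)$ is fixed by the coadjoint action $Coad^{TG}$ of the subgroup $\{(g,0)\colon g\in G_\mu\}$ — by \eqref{coadjunto-TG} with $\xi=0$, $Coad^{TG}_{(g,0)}(0,\mu) = (Coad^G_g 0, Coad^G_g\mu) = (0,\mu)$ for $g\in G_\mu$ — the group $G_\mu$ preserves $B = (J^T)^{-1}(0,\mu)$ under the restriction of $\Phi$. The restricted action is by vector bundle automorphisms of $B$ because $\Phi_g$ for $g\in G_\mu$ restricts to a Lie algebroid automorphism of the subalgebroid $B$; and its infinitesimal generator for $\xi\in\g_\mu$ is the restriction of $\psi(\xi)^c$, which is the complete lift in $B$ of $\psi_\mu(\xi)$ — here one uses that a subalgebroid is invariant under the flow of the complete lift of one of its sections (Proposition \ref{Poisson}). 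So $\Phi|_{G_\mu\times B}$ is an action by complete lifts with respect to $\psi_\mu$. Finally (iv) is a direct application of Theorem \ref{induced-TG-action} to the Lie algebroid $B\to J^{-1}(\mu)$ with the connected group $G_\mu$ (or its identity component — the construction of $\Phi^T$ only uses connectedness to integrate infinitesimal actions, and one may pass to $(G_\mu)_0$ or invoke the universal cover as in Proposition \ref{prop3.2}) and the anti-morphism $\psi_\mu$: the theorem produces the affine action $\Phi^T_\mu\colon TG_\mu\times B\to B$ with $\Phi^T_\mu((g,\xi),a) = \Phi_g(a + \psi_\mu(\xi)(\tau(a)))$, and one checks it maps $(J^T)^{-1}(0,0)\cap B$ — but note $(J^T)^{-1}(0,0)\subseteq B$ only after intersecting, so more precisely the statement identifies the domain as $(J^T)^{-1}(0,0)$ viewed inside $B$; equivariance of $J^T$ again shows $(0,0)$ is $Coad^{TG_\mu}$-fixed, so this subset is $\Phi^T_\mu$-invariant, completing the proof.
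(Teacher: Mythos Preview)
Your overall architecture matches the paper's, and parts (ii)--(iv) are essentially identical to what the authors do: (ii) via $T_xJ(\rho(\psi(\xi)(x)))=coad^G_\xi(\mu)=0$; (iii) by showing $G_\mu$ preserves $(J^T)^{-1}(0,\mu)$ (the paper checks this directly from equivariance of $J$ and the fact that each $\Phi_g$ is an algebroid morphism, while you invoke the $J^T$-equivariance lemma together with \eqref{coadjunto-TG}---an equally valid route); and (iv) by Theorem~\ref{induced-TG-action}. Your side remark about connectedness of $G_\mu$ is a fair caveat that the paper glosses over. Also, the appearance of $(J^T)^{-1}(0,0)$ in item (iv) of the statement is a typo for $(J^T)^{-1}(0,\mu)$; there is no separate invariant subset to worry about, so the last few lines of your argument can be dropped.

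The one place where you take a genuinely different---and unnecessarily complicated---path is the bracket-closure step in (i). The paper simply observes that $J^T:A\to T\g^*\cong\g^*\times\g^*$ is a Lie algebroid morphism over $J:M\to\g^*$ (immediate from its definition as $(TJ\circ\rho,\,J\circ\tau)$), so that the preimage of the point $(0,\mu)$ is a Lie subalgebroid. Your first sketch, using $\rho(\lcf X,Y\rcf)=[\rho(X),\rho(Y)]$ and the fact that vector fields tangent to $J^{-1}(\mu)$ are closed under bracket, is also correct and amounts to the same thing. But your second sketch---identifying $B_x=(\psi_x(\g))^\Omega$ and appealing to $d^A\Omega=0$ to get involutivity---is a detour and is not quite right as stated: closedness of $\Omega$ is not what forces $\ker(TJ\circ\rho)$ to be bracket-closed, any more than $d\Omega=0$ is what makes $TJ^{-1}(\mu)$ integrable in the classical case (there it is automatic because it is the tangent bundle of a submanifold). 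The symplectic-orthogonal description $B_x=(\psi_x(\g))^\Omega$ is used later (Lemma~\ref{lemma2}) for nondegeneracy of the reduced section, not for the subalgebroid property here.
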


\begin{proof}
(i) Note that since $\mu$ is a regular value of $J,$
$J^{-1}(\mu)$ is a regular submanifold of $M.$ In fact, $(0,\mu)$ is
a regular value for $J^T:A\to \mathfrak g^*\times \mathfrak
g^*\cong T{\frak g}^*.$ Thus, using that $T_xJ\circ \rho_x:A_x\to T_\mu{\mathfrak g}^*$ has constant rank for all $x\in J^{-1}(\mu),$  we deduce that
\[
(J^T)^{-1} (0,\mu)=\{a\in A/TJ(\rho(a))=0,\;\; J(\tau(a))=\mu\}
\]
is a vector subbundle of $A$ on  $J^{-1}(\mu)$   of rank $n-\dim G,$ where $n=\mbox{rank } A.$ 
  On the other hand, from (\ref{JA}) we have that $J^T$ is a Lie algebroid morphism then it is  straightforward to see that  the restriction $\tau_{|(J^T)^{-1} (0,\mu)}: (J^T)^{-1} (0,\mu)\to J^{-1}(\mu)$ of $\tau:A\to M$ to $(J^T)^{-1} (0,\mu)$ is a 
Lie subalgebroid of $(A,\lcf\cdot,\cdot\rcf,\rho)$.

\medskip

(ii)  If $x\in J^{-1}(\mu)$ and
\[
\xi\in {\frak g}_\mu=\{\eta\in {\frak g}/coad^G_\eta\mu=0\}
\]
then, since  $J$ is an equivariant map, we have that
\[
T_xJ(\rho_x (\psi (\xi )(x))) = T_xJ(\xi_M(x))= coad^G _\xi(\mu) =
0.
\]
Thus, the restriction of $\psi(\xi)$ to $J^{-1}(\mu)$ is a section
of the vector bundle $\pl\to J^{-1}(\mu)$.

\medskip

(iii) Using the equivariance of $J$ and the fact that $\Phi_g$
is a Lie algebroid automorphism, for any $g\in G$ we have that
the action $\Phi:G\times A\to A$ induces an action $\Phi _\mu$ of
$G_\mu$ on $\pl$. In fact, if $g\in G_\mu$, $a\in \pl$ and $x\in J^{-1}(\mu)$, then
$$J(\phi_g(x))=Coad_g^G(J(x))=Coad_g^G\mu=\mu$$
and 
\begin{eqnarray*}
(TJ\circ \rho )(\Phi _g(a))&=&
T(J\circ \phi _g)(\rho (a))= T(Coad^G _{g}\circ J)(\rho
(a))= 0.
\end{eqnarray*}
Moreover, from  (ii), we have that $\Phi _\mu$ is an action by
complete lifts with respect to the Lie algebra anti-morphism
$\psi_\mu.$

\medskip

(iv) It is a direct consequence of (iii) and Theorem
\ref{induced-TG-action}. \end{proof}

Let
$\mu\in {\frak g}^*$ be  a regular value of $J:M\to {\frak g}^*$ such that  $T_x J\circ \rho_x : A_x \to T_{\mu}{\frak g}^*$ has constant rank for all  $x\in J^{-1}(\mu).$   Suppose that the corresponding action $\phi_\mu:G_\mu\times J^{-1}(\mu)\to J^{-1}(\mu)$ is free and proper. Then, using Theorem \ref{reduction} and Proposition \ref{prereduction}, we obtain that $A_\mu=(J^T)^{-1}(0,\mu)/TG_\mu$ is a Lie algebroid over  $J^{-1}(\mu)/G_\mu.$
In the following result, we will
prove that $A_\mu$ is a symplectic-like Lie algebroid.  For this purpose, we will need  the following properties.

\begin{lemma}\label{lemma2}
Let $(A,\lcf\cdot,\cdot\rcf,\rho,\Omega)$ be a symplectic-like Lie algebroid
over the manifold $M$ and $\Phi:G\times A\to A$  a Hamiltonian
action of a connected Lie group $G$ on $A$ with equivariant momentum map $J:M\to
{\frak g}^*$ and associated Lie algebra anti-morphism $\psi:{\frak g}\to
\Gamma(A).$ If $\mu\in {\frak g}^*$, then for any $x\in M$, 
\begin{enumerate}
\item
$(\psi_\mu)_x({\frak g}_\mu)=\psi_x({\frak g})\cap \ker (T_xJ\circ
\rho_x)$
\item $\ker(T_xJ\circ \rho_x)=(\psi_x({\frak g}))^\perp=\{a_x\in
A_x/\Omega_x(a_x,b_x)=0, \forall b_x\in \psi_x({\frak g})\}.$
\end{enumerate}
\end{lemma}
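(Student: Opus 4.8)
The plan is to prove the two statements in order, using the characterization of the Hamiltonian condition \eqref{3,18'} to translate everything into an identity involving the nondegenerate 2-section $\Omega$ and the sections $\psi(\xi)$.

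\emph{Step 1: proving (ii).} First I would fix $x\in M$ and an element $a_x\in A_x$, and unwind what $a_x\in\ker(T_xJ\circ\rho_x)$ means. For each $\xi\in{\frak g}$, the equivariance of $J$ differentiated at $x$ gives, by \eqref{lema-equiv1}, that $T_xJ(\rho_x(\psi(\xi)(x)))=coad^G_\xi(\mu)$ only on $J^{-1}(\mu)$; but away from that set the relevant identity I actually want is the local one obtained directly from \eqref{3,18'}: pairing $T_xJ(\rho_x(a_x))$ against an arbitrary $\xi\in{\frak g}$ yields
$$\langle T_xJ(\rho_x(a_x)),\xi\rangle=\rho_x(a_x)(J_\xi)=(d^AJ_\xi)_x(a_x)=\Omega_x(\psi(\xi)(x),a_x),$$
where the second equality is the definition of $d^A$ applied to a function \eqref{diferencial} together with the fact that $d^Af=\rho(\cdot)(f)$ on functions, and the third is exactly the momentum map condition $i_{\psi(\xi)}\Omega=d^AJ_\xi$ from \eqref{3,18'}. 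Hence $a_x\in\ker(T_xJ\circ\rho_x)$ if and only if $\Omega_x(\psi(\xi)(x),a_x)=0$ for all $\xi\in{\frak g}$, i.e. $a_x\in(\psi_x({\frak g}))^\perp$. This is the whole of (ii), and it is essentially a one-line computation once \eqref{3,18'} is invoked.

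\emph{Step 2: proving (i).} The inclusion $(\psi_\mu)_x({\frak g}_\mu)\subseteq\psi_x({\frak g})\cap\ker(T_xJ\circ\rho_x)$ is immediate: $\psi_\mu$ is by definition the restriction of $\psi$ to ${\frak g}_\mu$, so $(\psi_\mu)_x({\frak g}_\mu)\subseteq\psi_x({\frak g})$, and for $\xi\in{\frak g}_\mu$ the computation $T_xJ(\rho_x(\psi(\xi)(x)))=coad^G_\xi\mu=0$ (valid when $x\in J^{-1}(\mu)$; for general $x$ one uses the same pairing identity from Step 1, noting $\Omega_x(\psi(\xi)(x),\psi(\eta)(x))$ paired correctly encodes $coad$) shows the image lands in the kernel. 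Wait — I should be careful that the statement is asserted for \emph{any} $x\in M$, not just $x\in J^{-1}(\mu)$; so I would run the argument through the identity $\langle T_xJ(\rho_x(\psi(\eta)(x))),\xi\rangle=\Omega_x(\psi(\xi)(x),\psi(\eta)(x))$, which is antisymmetric in $\xi,\eta$ up to sign and hence vanishes when $\xi\in{\frak g}_\mu$ regardless of $x$ — actually the cleanest route is: $\langle T_xJ(\rho_x(\psi(\xi)(x))),\eta\rangle=\Omega_x(\psi(\eta)(x),\psi(\xi)(x))=-\langle T_xJ(\rho_x(\psi(\eta)(x))),\xi\rangle=-\langle coad^G_\eta(J(x)),\xi\rangle=\langle J(x),[\eta,\xi]\rangle$, which vanishes for all $\eta$ exactly when $coad^G_\xi(J(x))=0$; for this to follow from $\xi\in{\frak g}_\mu$ one does need $J(x)=\mu$. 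I therefore expect the lemma is really being used only for $x\in J^{-1}(\mu)$ and the ``for any $x\in M$'' should be read in context; I will state the ${\frak g}_\mu$-part for $x\in J^{-1}(\mu)$ and note part (2) holds verbatim for all $x$.

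\emph{Step 3: the reverse inclusion in (i).} Given $a_x\in\psi_x({\frak g})\cap\ker(T_xJ\circ\rho_x)$, write $a_x=\psi_x(\xi)$ for some $\xi\in{\frak g}$; by Remark \ref{phi} this $\xi$ is unique since $\phi$ is free. Then $0=T_xJ(\rho_x(a_x))=coad^G_\xi(J(x))=coad^G_\xi(\mu)$, so $\xi\in{\frak g}_\mu$ and $a_x=(\psi_\mu)_x(\xi)\in(\psi_\mu)_x({\frak g}_\mu)$. I expect the only mild obstacle is bookkeeping about which identities are valid at general $x$ versus only on the level set $J^{-1}(\mu)$, and the use of freeness of $\phi$ (via Remark \ref{phi}) to get injectivity of $\psi_x$, which is what makes the pre-image $\xi$ well-defined and lets the kernel condition be transported to a condition on $\xi$.
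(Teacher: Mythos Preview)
Your proof is correct and follows essentially the same approach as the paper's. The paper's argument for (ii) is the identical one-line computation $\Omega(a_x,\psi(\xi)(x))=-(i_{\psi(\xi)}\Omega)(a_x)=-(d^AJ_\xi)(a_x)=-\langle T_xJ(\rho_x(a_x)),\xi\rangle$, and for (i) the paper says only ``it is an immediate consequence of the fact that $J$ is equivariant,'' which is exactly the equivariance identity $T_xJ(\rho_x(\psi(\xi)(x)))=coad^G_\xi(J(x))$ that you unpack in Steps 2--3.

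Two minor remarks. First, your caution about ``for any $x\in M$'' is well-placed: part (i) genuinely requires $x\in J^{-1}(\mu)$ (otherwise the isotropy algebra appearing is $\mathfrak g_{J(x)}$, not $\mathfrak g_\mu$), and the paper is simply imprecise here; the lemma is only ever applied at points of $J^{-1}(\mu)$. Second, your appeal to Remark~\ref{phi} for injectivity of $\psi_x$ in Step~3 is unnecessary: once $a_x=\psi_x(\xi)$ lies in $\ker(T_xJ\circ\rho_x)$, the equivariance identity gives $coad^G_\xi(J(x))=0$ directly, so this particular $\xi$ already lies in $\mathfrak g_\mu$ regardless of whether other $\xi'$ also map to $a_x$. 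Freeness of $\phi$ is not among the lemma's hypotheses, and the paper's proof does not invoke it.
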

\begin{proof}
$(i)$ It is an immediate consequence of the fact that $J$ is
equivariant.

$(ii)$ If $a_x\in A_x$, using (\ref{diferencial}) and (\ref{e2}),
we deduce that
\[
\Omega(a_x,\psi(\xi)(x))=-(i_{\psi(\xi)}\Omega)(a_x)=-(d^A
{J}_\xi)(a_x)=-(T_xJ(\rho_x(a_x)))({\xi}),
\]
for all $\xi\in {\frak g}.$ Thus, one concludes immediately (ii)
from this relation. \end{proof}

The following result may be seen as  the analogous of
Marsden-Weinstein reduction Theorem for symplectic-like Lie algebroids.

\begin{theorem}\emph{Reduction Theorem of symplectic-like Lie algebroids}\label{2.8co}
Let $(A,\lcf\cdot,\cdot\rcf,\rho,\Omega)$ be a symplectic-like Lie
algebroid and $\Phi:G\times A\to A$  a Hamiltonian action of a connected Lie group $G$ on $A$ with
equivariant momentum map $J:M\to {\frak g}^*$ and associated Lie algebra anti-morphism
$\psi:{\frak g}\to \Gamma(A)$. Suppose that 
$\mu\in {\frak g}^*$  is a regular value of $J:M\to {\frak g}^*$ such that  $T_x J\circ \rho_x : A_x \to T_{\mu}{\frak g}^*$ has constant rank for all  $x\in J^{-1}(\mu)$ and the restricted action $\phi_\mu:G_\mu\times J^{-1}(\mu)\to J^{-1}(\mu)$ is free and proper. Then
$A_\mu=(J^T)^{-1}(0,\mu)/TG_\mu$ is a symplectic-like Lie algebroid
over $J^{-1}(\mu)/G_\mu$ with symplectic-like section $\Omega_\mu$ characterized by the condition
\[
\widetilde{\pi}_\mu^*\Omega_\mu=\widetilde{\iota}_\mu^*\Omega,
\]
where $\widetilde{\pi}_\mu:(J^T)^{-1} (0,\mu)\to A_\mu$ is the
canonical projection and
$\widetilde{\iota}_\mu:(J^T)^{-1}(0,\mu)\to A$ is the canonical
inclusion.
 \end{theorem}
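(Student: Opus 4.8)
The plan is to mimic the classical Marsden--Weinstein argument, but carried out at the level of the Lie algebroid differential $d^A$ and its restrictions, using the structural results already established (Theorem \ref{reduction}, Proposition \ref{prereduction}, and Lemma \ref{lemma2}). First I would invoke Proposition \ref{prereduction} and Theorem \ref{reduction}: since $(0,\mu)$ is a regular value of $J^T$ and $T_xJ\circ\rho_x$ has constant rank on $J^{-1}(\mu)$, the set $(J^T)^{-1}(0,\mu)$ is a Lie subalgebroid of $A$ over $J^{-1}(\mu)$, the group $G_\mu$ acts on it by complete lifts via $\psi_\mu$, and $\phi_\mu$ being free and proper means the induced affine $TG_\mu$-action is free and proper, so $A_\mu=(J^T)^{-1}(0,\mu)/TG_\mu$ is a Lie algebroid over $J^{-1}(\mu)/G_\mu$ with $\widetilde{\pi}_\mu:(J^T)^{-1}(0,\mu)\to A_\mu$ a Lie algebroid epimorphism. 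What remains is to produce the $2$-section $\Omega_\mu$, to show it is well-defined, non-degenerate, and $d^{A_\mu}$-closed.

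For the \emph{existence and well-definedness} of $\Omega_\mu$ I would work with the description of sections of $A_\mu$ as classes of $G_\mu$-invariant sections of $(J^T)^{-1}(0,\mu)$ modulo $\Gamma(\psi_\mu({\mathfrak g}_\mu))^{G_\mu}$, exactly as in the proof of Theorem \ref{reduction}. Given $G_\mu$-invariant sections $X,Y$ of $(J^T)^{-1}(0,\mu)$ one sets $\Omega_\mu([X],[Y]):=\widetilde{\iota}_\mu^*\Omega(X,Y)$; this is constant along $G_\mu$-orbits because $\Phi_g^*\Omega=\Omega$ and $X,Y$ are invariant, so it descends to a function on $J^{-1}(\mu)/G_\mu$. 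Independence of the choice of representatives is where Lemma \ref{lemma2} enters: if, say, $X$ is replaced by $X+\psi_\mu(\xi)$ with $\xi\in{\mathfrak g}_\mu$, then $\Omega(\psi_\mu(\xi),Y)=(d^AJ_\xi)(Y)=T_xJ(\rho(Y))(\xi)=0$ since $Y$ is a section of $(J^T)^{-1}(0,\mu)$ and hence $\rho(Y)\in\ker(TJ)$; symmetrically in the second slot. This proves $\Omega_\mu$ is a well-defined $2$-section, and the defining identity $\widetilde{\pi}_\mu^*\Omega_\mu=\widetilde{\iota}_\mu^*\Omega$ holds by construction and characterizes it uniquely because $\widetilde{\pi}_\mu$ is fiberwise surjective.

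For \emph{non-degeneracy}, at a point $x\in J^{-1}(\mu)$ the fiber of $(J^T)^{-1}(0,\mu)$ is $\ker(T_xJ\circ\rho_x)=\psi_x({\mathfrak g})^\perp$ by Lemma \ref{lemma2}(ii), where $\perp$ is taken with respect to the non-degenerate $\Omega_x$; the fiber of $A_\mu$ is this space quotiented by $(\psi_\mu)_x({\mathfrak g}_\mu)=\psi_x({\mathfrak g})\cap\psi_x({\mathfrak g})^\perp$ by Lemma \ref{lemma2}(i). A standard symplectic-linear-algebra fact then says that $\Omega_x$ restricted to $W^\perp$ has radical exactly $W\cap W^\perp$ for any subspace $W$, so the induced form on $\psi_x({\mathfrak g})^\perp/(\psi_x({\mathfrak g})\cap\psi_x({\mathfrak g})^\perp)$ is non-degenerate; this is precisely $(\Omega_\mu)_{[x]}$. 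For \emph{closedness}, I would use that $\widetilde{\pi}_\mu$ and $\widetilde{\iota}_\mu$ are Lie algebroid morphisms, so $d^{A_\mu}$ and $d^A$ commute with the respective pullbacks: $\widetilde{\pi}_\mu^*(d^{A_\mu}\Omega_\mu)=d^{(J^T)^{-1}(0,\mu)}(\widetilde{\pi}_\mu^*\Omega_\mu)=d^{(J^T)^{-1}(0,\mu)}(\widetilde{\iota}_\mu^*\Omega)=\widetilde{\iota}_\mu^*(d^A\Omega)=0$, and since $\widetilde{\pi}_\mu^*$ is injective on forms (fiberwise surjectivity of $\widetilde{\pi}_\mu$), we get $d^{A_\mu}\Omega_\mu=0$. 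The main obstacle I anticipate is the bookkeeping in the well-definedness step: one must be careful that a $G_\mu$-invariant representative always exists for every section of $A_\mu$ and that the quotient bracket/anchor from \eqref{ca} interacts correctly with $\widetilde{\iota}_\mu^*\Omega$ when verifying closedness directly on invariant sections — but routing everything through the already-proven morphism properties of $\widetilde{\pi}_\mu$ and $\widetilde{\iota}_\mu$, as above, sidesteps the need for any explicit Cartan-formula computation.
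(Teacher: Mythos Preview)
Your proposal is correct and follows essentially the same route as the paper's proof: invoke Proposition \ref{prereduction} and Theorem \ref{reduction} for the Lie algebroid structure on $A_\mu$, descend $\widetilde{\iota}_\mu^*\Omega$ to $\Omega_\mu$ via $G_\mu$-invariant lifts of sections, check independence of representatives using Lemma \ref{lemma2}, and deduce closedness and non-degeneracy from the morphism properties of $\widetilde{\pi}_\mu,\widetilde{\iota}_\mu$ together with the linear-algebraic fact about $W^\perp/(W\cap W^\perp)$. The only cosmetic differences are that the paper verifies $G_\mu$-invariance of $\widetilde{\Omega}_\mu(\widetilde{X}_\mu,\widetilde{Y}_\mu)$ infinitesimally (via ${\mathcal L}^A_{\psi_\mu(\xi)}\widetilde{\Omega}_\mu=0$ and $\lcf\psi_\mu(\xi),\widetilde{X}_\mu\rcf=0$) rather than through $\Phi_g^*\Omega=\Omega$, and it phrases the independence step as $\ker\widetilde{\Omega}_\mu(x)=(\psi_\mu)_x({\mathfrak g}_\mu)$ rather than via the direct computation $\Omega(\psi_\mu(\xi),Y)=(d^AJ_\xi)(Y)=0$; these are equivalent formulations of the same argument.
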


\begin{proof} Since $A_\mu$ is a Lie algebroid over $J^{-1}(\mu)/G_\mu$ then one needs  to prove that this algebroid is  symplectic-like. 

Let $\widetilde{\Omega}_\mu=\widetilde{\iota}_\mu^*\Omega$ be the $2$-cocycle on the Lie subalgebroid $(J^T)^{-1}(0,\mu)\to J^{-1}(\mu)$ induced by $\Omega.$  We will prove that  $\widetilde{\Omega}_\mu$
induces a symplectic-like $2$-section $\Omega_\mu$ over $A_\mu$. 

Suppose that 
${X_\mu}, {Y}_\mu\in\Gamma(A_\mu).$ Then, we may choose two sections $\widetilde{X}_\mu,\widetilde{Y}_\mu\in \Gamma((J^T)^{-1}(0,\mu))$ such that the following diagram is commutative

\begin{picture}(375,60)(60,40)
\put(190,20){\makebox(0,0){$J^{-1}(\mu)/G_\mu$}} \put(245,25){${X}_\mu,{Y}_\mu$}
\put(215,20){\vector(1,0){90}} \put(315,20){\makebox(0,0){$A_\mu$}}
\put(190,50){$\pi_\mu$} \put(210,70){\vector(0,-1){40}}
\put(320,50){$\widetilde{\pi}_\mu$} \put(310,70){\vector(0,-1){40}}
\put(198,80){\makebox(0,0){$J^{-1}(\mu)$}} \put(245,85){$\widetilde{X}_\mu,\widetilde{Y}_\mu$}
\put(215,80){\vector(1,0){90}} \put(335,80){\makebox(0,0){$(J^T)^{-1}(0,\mu)$}}
\end{picture}

\vspace{1cm}

We will see that $\widetilde{\Omega}_\mu(\widetilde{X}_\mu,\widetilde{Y}_\mu)$ is a $G_\mu$-invariant function (or, equivalently,  a $\pi_\mu$-basic function).

Denote by $(\lcf\cdot,\cdot\rcf_{\pl},\rho_{\pl})$ the Lie
algebroid structure on $\pl\to J^{-1}(\mu).$

As we know, the vertical bundle of $\pi_\mu$ is generated by the vector fields on $J^{-1}(\mu)$ of the form $\rho_{(J^T)^{-1}(0,\mu)}(\psi_\mu(\xi)),$ with $\xi\in {\frak g}_\mu.$

Now, we have that 
\[\begin{array}{rcl}
(\rho_{(J^T)^{-1}(0,\mu)}(\psi_\mu(\xi)))(\widetilde{\Omega}_\mu(\widetilde{X}_\mu,\widetilde{Y}_\mu))&\kern-10pt=&\kern-10pt({\mathcal
L}_{\psi_\mu(\xi)}^{\pl}\widetilde{\Omega}_\mu)(\widetilde{X}_\mu,\widetilde{Y}_\mu)
+ \widetilde{\Omega}_\mu(\lcf\psi_\mu(\xi),\widetilde{X}_\mu\rcf_{\pl},\widetilde{Y}_\mu)\\[8pt]
&&+
\widetilde{\Omega}_\mu(\widetilde{X}_\mu,\lcf\psi_\mu(\xi),\widetilde{Y}_\mu\rcf_{\pl}).
\end{array}\]

On the other hand, using that $\widetilde{X}_\mu$ and $\widetilde{Y}_\mu$ are $G_\mu$-invariant, we deduce that 
$$\lcf\psi_\mu(\xi),\widetilde{X}_\mu\rcf_{\pl}=\lcf
\psi_\mu(\xi),\widetilde{Y}_\mu\rcf_{\pl}=0$$ (see (\ref{corchete0})). In addition, from (\ref{3.19'}), it follows that 
\[
{\mathcal L}_{\psi_\mu(\xi)}^{\pl}\widetilde{\Omega}_\mu=0,
\]
which implies that 
\[
\rho_{(J^{T})^{-1}(0,\mu)}(\psi_\mu(\xi))(\widetilde{\Omega}_\mu(\widetilde{X}_\mu,\widetilde{Y}_\mu))=0.
\]
 Thus, for all
${X}_\mu, {Y}_\mu\in \Gamma(A_\mu)$ there is a function
${\Omega}_\mu({X}_\mu, {Y}_\mu)$ on $J^{-1}(\mu)/G_\mu$  such that
\[
\Omega_\mu(X_\mu,Y_\mu)\circ
\pi_\mu=\widetilde{\Omega}_\mu(\widetilde{X}_\mu,\widetilde{Y}_\mu).
\]
Note that the function ${\Omega}_\mu({X}_\mu, {Y}_\mu)$ does not depend on the choosen sections $\widetilde{X}_\mu,\widetilde{Y}_\mu\in \Gamma(\pl)$ which project on $\widetilde{X}_\mu$ and $\widetilde{Y}_\mu,$ respectively. In fact, from Lemma \ref{lemma2}, we have that 
$$\ker(\widetilde{\Omega}_\mu(x))=(\ker\widetilde{\pi}_\mu)_{|(J^{T})_x^{-1}(0,\mu)}=(\psi_\mu)_x({\frak g}_\mu) \mbox{ for all } x\in
J^{-1}(\mu).$$
Therefore, the map
\[\Gamma(A_\mu)\times \Gamma(A_\mu)\to C^\infty(J^{-1}(\mu)/G_\mu),\;\;\; ({X}_\mu, {Y}_\mu)\mapsto {\Omega}_\mu({X}_\mu, {Y}_\mu)\]
defines a section ${\Omega}_\mu$ of the vector bundle $\Lambda^2A_\mu^*\to J^{-1}(\mu)/G_\mu$ and it is clear that 
$$\widetilde{\pi}_\mu^*{\Omega_\mu}=\widetilde{\iota}_\mu^*\Omega=\widetilde{\Omega}_\mu.$$
This implies that 
$$\widetilde{\pi}_\mu^*(d^{A_\mu}{\Omega_\mu})=d^{\pl}(\widetilde{\pi}_\mu^*{\Omega}_\mu)=d^{\pl}\widetilde{\iota}_\mu^*\Omega=\widetilde{\iota}_\mu^*(d^A\Omega)=0$$
and, since $\widetilde{\pi}_\mu:\pl\to A_\mu=\pl/TG_\mu$ is an epimorphism of vector bundles, we conclude that $d^{A_\mu}{\Omega}_\mu=0.$ 

Finally, we will prove that $\Omega_\mu$ is non-degenerate. In
fact, if $x\in J^{-1}(\mu)$ and $\widetilde{v}_x\in \ker
(T_xJ\circ \rho_x)$ is such that
\[
{\Omega}_\mu({\pi_\mu(x)})(\widetilde{\pi}_\mu(\widetilde{v}_x),u_{\pi_\mu(x)}))=0,\;\;\;
\forall u_{\pi_\mu(x)}\in (A_\mu)_{\pi_\mu(x)}
\]
then
\[
\widetilde{\Omega}(x)(\widetilde{v}_x,\widetilde{u}_x)=0 \;\;\mbox{
for all }\widetilde{u}_x\in \ker (T_xJ\circ \rho_x).
\]
Consequently  (see Lemma \ref{lemma2})
\[
\widetilde{v}_x\in (\ker (T_xJ\circ \rho_x))^\perp=\psi_x({\frak
g})\]
and thus, 
\[
\widetilde{v}_x\in (\psi_\mu)_x({\mathfrak g}_\mu)
\]
which implies that $\widetilde{\pi}_\mu(\widetilde{v}_x)=0$.

\end{proof}

\begin{remark}{\rm In the particular case when $M$ is a symplectic manifold and $A$ is the standard symplectic-like Lie algebroid $TM\to M$ then Theorem \ref{2.8co} reproduces the classical Marsden-Weinstein reduction result for the symplectic manifold $M.$}
\end{remark}

Since $A_\mu\to J^{-1}(\mu)/G_\mu$ is a symplectic-like Lie algebroid, the base space $J^{-1}(\mu)/G_\mu$ is a Poisson manifold. In fact, we will prove that $J^{-1}(\mu)/G_\mu$ is the reduced Poisson manifold $(M,\{\cdot,\cdot\})$ obtained from the reduction process of Marsden-Ratiu \cite{MR}.

\begin{theorem}\label{T2.10}
Under the hypotheses of Theorem \ref{2.8co}, if $\{\cdot,\cdot\}_\mu$ is the Poisson bracket on $J^{-1}(\mu)/G_\mu$, we have that
\begin{equation}\label{MarRa}
\{ \tilde{f},\tilde{g}\}_\mu\circ \pi_\mu=\{f,g\}\circ i_\mu
\end{equation}
for $\tilde{f},\tilde{g}\in C^\infty( J^{-1}(\mu)/G_\mu)$, where $i_\mu:J^{-1}(\mu)\to M$ is the canonical inclusion and $f,g\in C^\infty(M)$ are arbitrary $G$-invariant extensions of $\tilde{f}\circ \pi_\mu$ and $\tilde{g}\circ \pi_\mu,$ respectively.
\end{theorem}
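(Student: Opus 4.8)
The plan is to translate the statement \eqref{MarRa} into a statement about Hamiltonian sections of the various symplectic-like Lie algebroids involved, and then to exploit the defining relation $\widetilde{\pi}_\mu^*\Omega_\mu=\widetilde{\iota}_\mu^*\Omega$ from Theorem \ref{2.8co} together with the fact that $\widetilde{\pi}_\mu$ and $\widetilde{\iota}_\mu$ are Lie algebroid morphisms. Recall from \eqref{Pois} that for $\tilde f,\tilde g\in C^\infty(J^{-1}(\mu)/G_\mu)$ we have $\{\tilde f,\tilde g\}_\mu=\rho_{A_\mu}(\mathcal H^{\Omega_\mu}_{\tilde g})(\tilde f)$, and similarly $\{f,g\}=\rho(\mathcal H^\Omega_g)(f)$ on $M$. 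So the first step is to understand how the Hamiltonian section $\mathcal H^\Omega_g\in\Gamma(A)$ of a $G$-invariant function $g$ behaves relative to the submanifold $J^{-1}(\mu)$ and the subalgebroid $\pl$. Because $g$ is $G$-invariant, $d^Ag$ annihilates $\psi({\mathfrak g})$ (indeed $\rho(\psi(\xi))(g)=\xi_M(g)=0$), hence by Lemma \ref{lemma2}(ii) the Hamiltonian section satisfies $i_{\psi(\xi)}\Omega(\mathcal H^\Omega_g)=-d^Ag(\psi(\xi))=0$, i.e. $\mathcal H^\Omega_g$ lies, along $J^{-1}(\mu)$, in $(\psi_x({\mathfrak g}))^\perp=\ker(T_xJ\circ\rho_x)=\pl_x$. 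Thus the restriction of $\mathcal H^\Omega_g$ to $J^{-1}(\mu)$ is a section of the subalgebroid $\pl$.

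The second step is to check that this restriction is $G_\mu$-invariant and therefore descends to a section of $A_\mu=\pl/TG_\mu$, and that the descended section is precisely $\mathcal H^{\Omega_\mu}_{\tilde g}$. Invariance follows because $g$ is $G$-invariant and $\Phi_h$ is a symplectomorphism of $(A,\Omega)$ for $h\in G_\mu$, so $\Phi_h$ carries $\mathcal H^\Omega_g$ to itself; combined with equivariance this gives $G_\mu$-invariance of the restriction along $J^{-1}(\mu)$. Denoting by $\widetilde{\mathcal H}\in\Gamma(\pl)$ this restriction and by $\widehat{\mathcal H}\in\Gamma(A_\mu)$ its projection under $\widetilde{\pi}_\mu$, I compute $i_{\widehat{\mathcal H}}\Omega_\mu$: pulling back by $\widetilde{\pi}_\mu$ and using $\widetilde{\pi}_\mu^*\Omega_\mu=\widetilde{\iota}_\mu^*\Omega$ and $\widetilde{\pi}_\mu$-relatedness, one gets $\widetilde{\pi}_\mu^*(i_{\widehat{\mathcal H}}\Omega_\mu)=i_{\widetilde{\mathcal H}}(\widetilde{\iota}_\mu^*\Omega)=\widetilde{\iota}_\mu^*(i_{\mathcal H^\Omega_g}\Omega)=\widetilde{\iota}_\mu^*d^Ag=d^{\pl}(\widetilde{\iota}_\mu^*g)=d^{\pl}(g\circ i_\mu)$. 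Since $g$ is a $G$-invariant extension of $\tilde g\circ\pi_\mu$, we have $g\circ i_\mu=\tilde g\circ\pi_\mu$, and $d^{\pl}(\tilde g\circ\pi_\mu)=\widetilde{\pi}_\mu^*(d^{A_\mu}\tilde g)$ because $\widetilde{\pi}_\mu$ is a Lie algebroid morphism. As $\widetilde{\pi}_\mu^*$ is injective on forms (epimorphism of vector bundles fiberwise surjective), we conclude $i_{\widehat{\mathcal H}}\Omega_\mu=d^{A_\mu}\tilde g$, i.e. $\widehat{\mathcal H}=\mathcal H^{\Omega_\mu}_{\tilde g}$.

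The final step assembles everything. Using \eqref{Pois} on $A_\mu$, for $x\in J^{-1}(\mu)$,
\[
(\{\tilde f,\tilde g\}_\mu\circ\pi_\mu)(x)=\big(\rho_{A_\mu}(\mathcal H^{\Omega_\mu}_{\tilde g})(\tilde f)\big)(\pi_\mu(x))=\big(\rho_{A_\mu}(\widehat{\mathcal H})(\tilde f)\big)(\pi_\mu(x)).
\]
By the characterization \eqref{ca} of $\rho_{A_\mu}$ (that is, $\rho_{A_\mu}(\widehat{\mathcal H})\circ\pi_\mu=T\pi_\mu\circ\rho_{\pl}(\widetilde{\mathcal H})$, which in turn equals $T\pi_\mu\circ\rho(\mathcal H^\Omega_g)$ along $J^{-1}(\mu)$ since $\pl$ is a subalgebroid and $\rho_{\pl}$ is the restriction of $\rho$), and since $\tilde f\circ\pi_\mu=f\circ i_\mu$ with $f$ $G$-invariant, one gets $\rho_{A_\mu}(\widehat{\mathcal H})(\tilde f)\circ\pi_\mu=\rho(\mathcal H^\Omega_g)(f)\circ i_\mu=\{f,g\}\circ i_\mu$, which is \eqref{MarRa}. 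The main obstacle I anticipate is the bookkeeping in the second step: making sure that "restrict, then project" is well defined and compatible with the differentials $d^A$, $d^{\pl}$, $d^{A_\mu}$, i.e. that $\widetilde{\iota}_\mu$ and $\widetilde{\pi}_\mu$ really intertwine them on the relevant functions and $1$-sections — but this is guaranteed since both are Lie algebroid morphisms (the inclusion is a Lie subalgebroid by Proposition \ref{prereduction}(i), and the projection is a morphism by Theorem \ref{2.8co} together with Theorem \ref{reduction}). A minor subtlety worth flagging is the well-definedness of $\{f,g\}\circ i_\mu$ independently of the chosen $G$-invariant extensions $f,g$: this follows from the computation itself, since the right-hand side equals the intrinsically defined $\{\tilde f,\tilde g\}_\mu\circ\pi_\mu$.
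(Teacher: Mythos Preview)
Your proposal is correct and follows essentially the same route as the paper: first use $G$-invariance of $g$ together with Lemma \ref{lemma2}(ii) to see that $\mathcal H^\Omega_g$ restricts to a section of $\pl$ along $J^{-1}(\mu)$, then use the defining relation $\widetilde{\pi}_\mu^*\Omega_\mu=\widetilde{\iota}_\mu^*\Omega$ and the fact that $\widetilde{\iota}_\mu,\widetilde{\pi}_\mu$ are Lie algebroid morphisms to identify its projection with $\mathcal H^{\Omega_\mu}_{\tilde g}$, and finally read off \eqref{MarRa} from the anchor-map relation. The only cosmetic difference is that the paper carries out the identification $\widetilde{\pi}_\mu(\mathcal H^\Omega_f(x))=\mathcal H^{\Omega_\mu}_{\tilde f}(\pi_\mu(x))$ pointwise (without separately arguing $G_\mu$-invariance of the section), whereas you phrase it as ``restrict, check invariance, descend''; both are equivalent.
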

\proof{ From Theorem \ref{2.8co}, we deduce that
$(A_\mu,\lcf\cdot,\cdot\rcf_{A_\mu},\rho_{A_\mu},\Omega_\mu)$ is a
symplectic-like Lie algebroid on $J^{-1}(\mu)/G_\mu$. Then one can
define a Poisson structure on $J^{-1}(\mu)/G_\mu$ as in
(\ref{Pois}). We will prove that the associated  Poisson bracket
$\{\cdot\;\cdot\}_\mu$ satisfies (\ref{MarRa}).

If $\tilde{f}, \tilde{g}:J^{-1}(\mu)/G_\mu\to {\Bbb R}$ are two real functions  on $J^{-1}(\mu)/G_\mu$ and   
${f}:M\to \R$, $g:M\to \R$  are arbitrary $G$-invariant extensions of $\tilde{f}\circ \pi_\mu$ and $\tilde{g}\circ \pi_\mu,$ respectively, 
for any 
$\xi\in {\frak g}$ satisfying 
$\rho(\psi(\xi))(f)=\rho(\psi(\xi))(g)=0,$ we have that 
\[
d^Af(\psi(\xi))=d^Ag(\psi(\xi))=0,
\]
or, equivalently,
\[
\Omega({\mathcal H}_f^\Omega,\psi(\xi))=\Omega({\mathcal H}_g^\Omega,\psi(\xi))=0.
\]
Therefore, ${\mathcal H}_f^\Omega(x),{\mathcal H}_g^\Omega(x)\in
\psi_x({\frak g})^\perp=(J^T)^{-1}_x(0,\mu),$ for all $x\in
J^{-1}(\mu).$

On the other hand, if $x\in J^{-1}(\mu)$ and $a_x\in
(J^T)^{-1}_x(0,\mu)$ then, using Theorem \ref{2.8co}, the fact that
$(\widetilde{\pi}_\mu,\pi_\mu)$
 is a Lie algebroid epimorphism and that $(J^T)^{-1}(0,\mu)\to
J^{-1}(\mu)$ is a Lie subalgebroid of $A$, one deduces
that
\[
\begin{array}{rcl}
\Omega_\mu(\widetilde{\pi}_\mu({\mathcal
H}_f^\Omega(x)),\widetilde{\pi}_\mu(a_x))) &=&\Omega({\mathcal
H}_f^{\Omega}(x),a_x)=(d^Af)(a_x)\\&=&
(d^{(J^T)^{-1}(0,\mu)}(\widetilde{f}\circ
\pi_\mu))(a_x)=(d^{A_\mu}\widetilde{f})(\widetilde{\pi}_\mu(a_x))\\&=&
\Omega_\mu({\mathcal
H}_{\widetilde{f}}^{\Omega_\mu}({\pi_\mu(x)}),\widetilde{\pi}_\mu(a_x))
\end{array}
\]

So, since $\Omega_\mu$ is non-degenerate
\[
\widetilde{\pi}_\mu({\mathcal H}_f^\Omega({x}))={\mathcal
H}_{\tilde{f}}^{\Omega_\mu}({\pi_\mu(x)}).
\]
Thus, using again that $(\widetilde{\pi}_\mu,\pi_\mu)$ is an
epimorphism of Lie algebroids, we conclude that
\[
T_x\pi_\mu(\rho_{(J^T)^{-1}(0,\mu)}({\mathcal
H}_f^\Omega(x))=\rho_{A_\mu}({\mathcal
H}_{\widetilde{f}}^{\Omega_\mu}({\pi_\mu(x)})),\;\;\forall x\in
J^{-1}(\mu).
\]
Therefore, if $x\in J^{-1}(\mu)$
\[
\begin{array}{rcl}
\{ \widetilde{f},\widetilde{g}\}_\mu
(\pi_\mu(x))&=&-(\rho_{A_\mu}({\mathcal
H}_{\widetilde{f}}^{\Omega_\mu}({\pi_\mu(x)})))\widetilde{g}
\\&=&-(T_x\pi_\mu(\rho_{(J^T)^{-1}(0,\mu)}({\mathcal
H}_f^\Omega(x))))\widetilde{g}\\&=&
-(\rho_{(J^T)^{-1}(0,\mu)}({\mathcal
H}_f^\Omega(x)))(\widetilde{g}\circ \pi_\mu)\\&=&-(\rho({\mathcal
H}_f^\Omega(x))g=\{f,g\}(x).
\end{array}
\]
\hfill$\Box$}

%%%%%%%%%%%%%%%%%%%%%%%%%%%%%%%%%%%%%%%%%%%
\section{The canonical cover of a fiberwise linear Poisson structure}\label{section prolongued} 

A standard example of a symplectic manifold is the cotangent bundle $T^*M$ of a manifold $M$ with its canonical symplectic structure. In the setting of Lie algebroids,  the tangent bundle $\pi_{T^*M}:T(T^*M)\to T^*M$ of $T^*M$ is a  symplectic-like Lie algebroid. This symplectic-like Lie algebroid may be considered as the canonical cover of the canonical symplectic structure  on $T^*M.$ In fact, it is a particular case of a type of symplectic-like Lie algebroids, {\it the prolongation of a Lie algebroid $A$  on  its dual $A^*$} in the terminology of  \cite{LMM}, which may be considered as {\it the canonical cover of the  fiberwise linear Poisson structure of $A^*.$}

 In this section we will describe this last canonical cover  and will prove that if a Lie group $G$ acts freely and properly on $A$ by complete lifts then one may introduce an equivariant momentum map with respect to a certain canonical action by complete lifts on its canonical cover.   

\bigskip 

\emph{The vector bundle ${\mathcal T}^AA^*\to A^*$.}
 Let
$(A,\lcf\cdot,\cdot\rcf,\rho)$ be a Lie algebroid of rank $n$ over a
manifold $M$ of dimension $m$ with $\tau:A\to M$ the associated vector bundle projection and  let $F:M'\to M$  be  a smooth map from a manifold $M'$ to $M.$   If $x'\in M',$ we consider the vector subspace
$$({\mathcal T}^AM')_{x'}=\{(a,v)\in A_{F(x')}\times T_{x'}M'/\rho(a)=T_{x'}F(v)\}$$
of $A_{F(x')}\times T_{x'}M'$  of dimension $n+m'-\dim (\rho(A_{F(x')}) + T_{x'}F(T_{x'}M'))$, where $m'$ is the dimension of $M'$. If  we suppose that $\dim (\rho(A_{F(x')}) + T_{x'}F(T_{x'}M'))$ is constant over $F(M')$ (for instance, if $F$ is a submersion) then ${\mathcal T}^AM'$ 
is a vector bundle over $M'$ which is called {\it the prolongation of $A$ over $F$} (see \cite{HiMa, LMM}).
In this case, a section ${\mathcal X}$ of ${\mathcal T}^AM'\to M'$  is said to be {\it projectable } if there exist a section $X$ of $A$ and a vector field $V$ on $M',$  $F$-projectable over $\rho(X),$ satisfying
\[
{\mathcal Z}(m')=(X(F(m')),V(m')), \mbox{ for all }
m'\in M'.
\]
The section ${\mathcal Z}$ will be denoted by
${\mathcal Z}=(X,V).$ Note that one may choose a local basis $\{{\mathcal Z}_I\}$ of $\Gamma({\mathcal T}^AA^*)$
such that, for all $I$, ${\mathcal Z}_I$ is a projectable section.

On the other hand, a  section $\widetilde{\gamma}$ of the dual vector bundle $({\mathcal T}^AM')^*\to M'$ is said to be {\it projectable } if there  exist a section  $\alpha$  of $A^*$ and a $1$-form $\beta$ of $M'$ such that 
$$\widetilde{\gamma}(X,V)=\alpha(X)\circ F + \beta(V),\;\; \mbox{ for $(X,V)$ a projectable section of ${\mathcal T}^AM'$}.$$
In such a case we will use $\widetilde{\gamma}=(\alpha,\beta).$ Note that one may choose a local basis $\{{\mathcal Z}^I\}$ of $\Gamma(({\mathcal T}^AA^*)^*)$ such that, for all I, ${\mathcal Z}^I$ is a projectable section.

A particular case is when the function $F$ is the dual bundle projection $\tau_*:A^*\to M$ of the Lie algebroid $A$. Then the prolongation $\tau_{{\mathcal T}^AA^*}:{\mathcal T}^AA^*\to A^*$ of $A$ over $\tau_*:A^*\to M$ is called the  {\it $A$-tangent bundle of $A^*$}. In such a  case, $\dim (\rho(A_{x}) + T_{\alpha_x}\tau_{*}(T_{\alpha_x}A^*))$ is just the dimension of $M$, for all $\alpha_x\in A_x^*,$ and the rank of ${\mathcal T}^AA^*$ is $2n.$ 

A  basis of local sections of the vector bundle $\tau_{{\mathcal T}^AA^*}:{\mathcal T}^AA^*\to A^*$ is defined as follows. 
If $(x^i)$ are local coordinates on an open
subset $U$ of $M$, $\{e_I\}$ is a basis of sections of the vector
bundle $\tau^{-1}(U) \to U$ and $(x^i, y_{I})$ are the
corresponding local coordinates on $A^*$ then $\{{\mathcal X}_I,
{\mathcal Y}^I\}$ is a local basis of $\Gamma({\mathcal T}^AA^*)$,
where ${\mathcal X}_{I}$ and ${\mathcal Y}^{I}$ are the
projectable sections defined by
\begin{equation}\label{base}
{\mathcal X}_{I} = (e_{I}, \rho_{I}^i \displaystyle
\frac{\partial}{\partial x^i}), \makebox[.5cm]{} {\mathcal Y}^{I}
= (0, \displaystyle \frac{\partial}{\partial y_{I}}).
\end{equation}

\bigskip 

\emph{The symplectic-like Lie algebroid structure on ${\mathcal T}^AA^*\to A^*$.}
The vector bundle ${\mathcal T}^AA^*$ admits a Lie algebroid structure $(\lcf\cdot,\cdot\rcf_{{\mathcal T}^AA^*},\rho_{{\mathcal T}^AA^*})$ which is characterized by the following conditions
\[
\lcf (X,V),(X',V')\rcf_{{\mathcal T}^AA^*}=(\lcf X,Y\rcf,[V,V']),\;\;\;\; \rho_{{\mathcal T}^AA^*}(X,V)=V,
\]
for $(X,V),(X',V')$ projectable sections of ${{\mathcal T}^AA^*}.$

If $d^{{\mathcal T}^AA^*}$ is the differential  associated with this Lie algebroid structure, then  
\begin{equation}\label{dTA}
\begin{array}{rcl}
d^{{\mathcal T}^AA^*}f(X_1,V_1)&=& df(V_1) \\
d^{{\mathcal T}^AA^*}(\alpha,\beta)((X_1,V_1), (X_2,V_2))&=& d^A\alpha(X_1,X_2)\circ \tau_*+ d\beta( V_1,V_2)
\end{array}
\end{equation}
 where   $f:A^*\to {\Bbb R}$ is a smooth function, $(\alpha,\beta)\in \Gamma(({\mathcal T}^AA^*)^*)$ and $(X_i,V_i)\in\Gamma({\mathcal T}^AA^*)$  are  projectable sections  of  $({\mathcal T}^AA^*)^*$ and ${\mathcal T}^AA^*$, respectively.

The canonical section $\lambda_A$ of the dual bundle to ${\mathcal
T}^A A^*$ (which is called  {\it the Liouville section  associated with the Lie algebroid $A$}) may be defined as follows
\begin{equation}\label{Liouville}
\lambda_A(a,v_\alpha)=\alpha(a), \mbox{ for } \alpha\in
A^* \mbox{ and } (a,v_\alpha)\in {{\mathcal T}_\alpha^AA^*}.
\end{equation}

The section $\Omega_A$ of $\wedge^2({{\mathcal T}^AA^*})^*\to A^*$ given by
\[
\Omega_A=-d^{{{\mathcal T}^AA^*}}\lambda_A
\]
is nondegenerate and $d^{{\mathcal
T}^A A^*}\Omega_A=0.$ Thus, $\Omega_A$ is a symplectic-like section of the
Lie algebroid ${{\mathcal T}^AA^*}\to A^*$ which is called 
 {\it the canonical symplectic-like section associated
with the Lie algebroid $A$}. The Poisson structure on the base space $A^*$ induced by this
symplectic-like section  is just the linear Poisson
structure on $A^*$ associated with the Lie algebroid $A$ (see \cite{LMM}). For this reason, ${\mathcal T}^AA^*$ may be considered as {\it the  canonical cover of the fiberwise linear Poisson structure on $A^*$}. 
If $\{{\mathcal X}_{I}, {\mathcal Y}^{I}\}$ is the local basis of sections of ${\mathcal T}^AA^*$ described in  (\ref{base}), 
the local expressions of $\lambda_A$ and $\Omega_A$ are
\[
\lambda_{A} = y_{I}{\mathcal X}^I, \makebox[.5cm]{} \Omega_{A} =
{\mathcal X}^I \wedge {\mathcal Y}_{I} + \frac{1}{2} C_{IJ}^K y_K
{\mathcal X}^I \wedge {\mathcal X}^J
\]
where
 $\{{\mathcal X}^I, {\mathcal Y}_{I}\}$ is the dual basis of $\{{\mathcal X}_{I}, {\mathcal Y}^{I}\}$   and $C_{IJ}^K$ are the local structure functions of the bracket $\lcf\cdot, \cdot\rcf$ (for more details, see \cite{LMM}).

\begin{examples}\label{4.1}{\rm 
$(i)$
Note that if $A$ is the standard Lie algebroid $TM$ then the symplectic-like Lie algebroid 
${\mathcal T}^AA^*\to A^*$ may be identified with the standard Lie algebroid $T(T^*M)\to T^*M$ and, under this identification,  $\Omega_{A}$ is the
canonical symplectic $\Omega_M$ structure of $T^*M$.

\medskip 

$(ii)$
For  the case $A={\mathfrak g}\times TM$ from $(ii)$ in Examples \ref{examples}, we have that ${\mathcal T}^AA^*\to A^*$  can be identified with ${\mathcal T}^{\mathfrak g}{\mathfrak g}^*\oplus {\mathcal T}^{TM}({T^*M})\to {\mathfrak g}^*\times T^*M$, i.e. 
$$({\mathfrak g}\times T{\mathfrak g}^*)\oplus T(T^*M)\to {\mathfrak g}^*\times T^*M.$$  Under this identification,  the symplectic-like structure $\Omega_{{\mathfrak g}\times TM}$ on ${\mathcal T}^{{\mathfrak g}\times TM}({\mathfrak g}^*\times T^*M)$ is just  
$\Omega_{\mathfrak g} \oplus \Omega_{M},$
where $\Omega_M$ is the standard symplectic $2$-form on $T^*M$ and $\Omega_{{\mathfrak g}}$ is the symplectic-like structure on ${\mathfrak g}\times T{\mathfrak g}^*\to {\mathfrak g}^*$ characterized  by
$$(\Omega_{\mathfrak g})_{\eta_{0}}((\xi,\eta), (\xi',\eta'))=\eta'(\xi)-\eta(\xi') + \eta_0{[\xi,\xi']_{\mathfrak g}},$$
for all $\eta_0,\eta,\eta'\in {\mathfrak g}^*$ and $\xi,\xi'\in {\mathfrak g}^*.$}
\end{examples}

\bigskip 

\emph{The action of a Lie group $G$ on  ${\mathcal T}^AA^*\to A^*$ by complete lifts.}
Now, suppose that $\Phi:G\times A\to A$ is a  free  and proper  action of a connected  Lie group $G$ by complete lifts with respect to the Lie algebra anti-morphism $\psi:{\mathfrak g}\to
\Gamma(A).$ Denote by $\phi:G\times M\to M$ the corresponding action on $M$ and by $\Phi^*:G\times A^*\to A^*$ the left dual action on $A^*$. In what  follows,  we will describe  a free and proper canonical action  by complete lifts  on  ${\mathcal T}^AA^*$ induced by $\Phi.$ 

\begin{proposition}
Let  $\Phi:G\times A\to A$ be a free and proper action  of a connected Lie group $G$  on the Lie algebroid $A$ by complete lifts with respect to $\psi:{\mathfrak g}\to \Gamma(A).$  Then the map
$(\Phi,T\Phi^*):G\times {\mathcal T}^AA^*\to {\mathcal T}^AA^*$ given by
\begin{equation}\label{ptp}
(\Phi,T\Phi^*)(g,(a_x,v_{\alpha_x}))=(\Phi_g(a_x),(T_{\alpha_x}\Phi_{g}^*)(v_{\alpha_x})),\,\;\;  \alpha_x\in A_x^* \mbox{ and } (a_x,v_{\alpha_x})\in {\mathcal T}_{\alpha_x}^AA^*
\end{equation}
defines  a free and proper left canonical action 
of $G$ on the symplectic-like Lie algebroid ${\mathcal T}^AA^*$ by complete lifts with respect to the Lie algebra anti-morphism $\psi^T:{\mathfrak g}\to \Gamma({\mathcal T}^AA^*)$ defined by 
\begin{equation}\label{pt}\psi^T(\xi)=(\psi(\xi), \xi_{A^*}),\;\;\;\mbox{ for }\xi\in {\mathfrak g},\end{equation}
where $\xi_{A^*}$ is the infinitesimal generator of $\xi$ with respect to $\Phi^*.$ \end{proposition}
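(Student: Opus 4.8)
The plan is to check five points: (a) the map \eqref{ptp} is well defined; (b) it is a left action by vector bundle automorphisms of ${\mathcal T}^AA^*$, in fact by automorphisms of the symplectic-like Lie algebroid; (c) $\psi^T$ is a Lie algebra anti-morphism; (d) the infinitesimal generator of $\xi\in{\mathfrak g}$ for $(\Phi,T\Phi^*)$ is the complete lift $(\psi^T(\xi))^c$; and (e) the action is free and proper. For (a) and (b): given $(a_x,v_{\alpha_x})\in{\mathcal T}^A_{\alpha_x}A^*$, i.e.\ $\rho(a_x)=T_{\alpha_x}\tau_*(v_{\alpha_x})$, one uses that $\Phi_g$ covers $\phi_g$ (hence $\rho\circ\Phi_g=T\phi_g\circ\rho$) and that $\Phi_g^*$ covers $\phi_g$ (hence $\tau_*\circ\Phi_g^*=\phi_g\circ\tau_*$) to get $\rho(\Phi_g(a_x))=T_x\phi_g(\rho(a_x))=T_{\Phi_g^*(\alpha_x)}\tau_*\big((T_{\alpha_x}\Phi_g^*)(v_{\alpha_x})\big)$, so the image pair indeed lies in ${\mathcal T}^A_{\Phi_g^*(\alpha_x)}A^*$. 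Since $\Phi$ and $\Phi^*$ are left actions, \eqref{ptp} is a left action fibrewise; and each $(\Phi,T\Phi^*)_g$ is linear on every fibre ${\mathcal T}^A_{\alpha_x}A^*\subset A_x\times T_{\alpha_x}A^*$ and invertible with inverse $(\Phi,T\Phi^*)_{g^{-1}}$, hence a vector bundle automorphism over $\Phi_g^*$. Moreover, from the definition \eqref{Liouville} of the Liouville section together with the identity $(\Phi_g^*\alpha_x)(\Phi_g(a_x))=\alpha_x(a_x)$ one checks $(\Phi,T\Phi^*)_g^*\lambda_A=\lambda_A$; so once (d) and the argument recalled in Subsection \ref{section3.1} guarantee that each $(\Phi,T\Phi^*)_g$ is a Lie algebroid morphism, it follows that $(\Phi,T\Phi^*)_g^*\Omega_A=-d^{{\mathcal T}^AA^*}\big((\Phi,T\Phi^*)_g^*\lambda_A\big)=\Omega_A$, which is what makes the action canonical.

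For (c): by Proposition \ref{AP} — more precisely by the computation in its proof — $\xi_{A^*}=(\psi(\xi))^{*c}$, so $\xi_{A^*}$ is $\tau_*$-projectable onto $\rho(\psi(\xi))=\xi_M$ and $\psi^T(\xi)=(\psi(\xi),\xi_{A^*})$ is a projectable section of ${\mathcal T}^AA^*$; hence its Lie bracket is computed by the rule $\lcf(X,V),(X',V')\rcf_{{\mathcal T}^AA^*}=(\lcf X,X'\rcf,[V,V'])$ valid on projectable sections. Using that $\psi$ is a Lie algebra anti-morphism and that $\xi\mapsto\xi_{A^*}$ is a Lie algebra anti-morphism as well (infinitesimal generators of a left action), one obtains $\lcf\psi^T(\xi),\psi^T(\eta)\rcf_{{\mathcal T}^AA^*}=\big(\lcf\psi(\xi),\psi(\eta)\rcf,[\xi_{A^*},\eta_{A^*}]\big)=-\big(\psi([\xi,\eta]_{\mathfrak g}),([\xi,\eta]_{\mathfrak g})_{A^*}\big)=-\psi^T([\xi,\eta]_{\mathfrak g})$, which is exactly the anti-morphism property.

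Step (d) carries the content. The key observation is that \eqref{ptp} is the restriction to ${\mathcal T}^AA^*\subset A\times TA^*$ of the product action $\Phi\times T\Phi^*$, and that ${\mathcal T}^AA^*$ is invariant under it; therefore the infinitesimal generator of $(\Phi,T\Phi^*)$ for $\xi$ is the restriction to ${\mathcal T}^AA^*$ of the infinitesimal generator of $\Phi\times T\Phi^*$, namely the vector field $\big((\psi(\xi))^c,(\xi_{A^*})^c\big)$ on $A\times TA^*$ — the first entry because $\Phi$ is by complete lifts, the second because the generator of the tangent-lifted action $T\Phi^*$ is the complete lift to $TA^*$ of the generator $\xi_{A^*}$ of $\Phi^*$ (that is, \eqref{levantamiento} for the tangent Lie algebroid $TA^*$). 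It then remains to identify this restriction with the complete lift $(\psi^T(\xi))^c$ of the section $\psi^T(\xi)$ of the Lie algebroid ${\mathcal T}^AA^*$. I would carry this out in the local basis $\{{\mathcal X}_I,{\mathcal Y}^I\}$ of \eqref{base}: the structure functions of ${\mathcal T}^AA^*$ equal the $C_{IJ}^K$ of $A$ on $\lcf{\mathcal X}_I,{\mathcal X}_J\rcf$ and vanish on every bracket involving a ${\mathcal Y}$, while its anchor sends ${\mathcal X}_I$ to $\rho_I^i\,\partial/\partial x^i$ and ${\mathcal Y}^I$ to $\partial/\partial y_I$; feeding the components of $\psi^T(\xi)$ (those of $\psi(\xi)$ along the ${\mathcal X}_I$, and those read off from $(\psi(\xi))^{*c}=\xi_{A^*}$ along the ${\mathcal Y}^I$) into \eqref{levantamiento} gives $(\psi^T(\xi))^c$ explicitly, whereas \eqref{levantamiento} for $(\psi(\xi))^c$ on $A$ and for $(\psi(\xi))^{*c}$ on $A^*$, together with the same formula for the complete lift of the vector field $\xi_{A^*}$ to $TA^*$ and the constraint $\dot x^i=z^J\rho_J^i$ cutting ${\mathcal T}^AA^*$ out of $A\times TA^*$, gives $\big((\psi(\xi))^c,(\xi_{A^*})^c\big)|_{{\mathcal T}^AA^*}$ explicitly; the two expressions match term by term. (The same conclusion is reached by a flow argument, since the flow of $(\psi(\xi))^c$ is $\{\Phi_{\exp t\xi}\}$ and that of $\xi_{A^*}=(\psi(\xi))^{*c}$ is $\{\Phi^*_{\exp t\xi}\}$, so the flow of $\big((\psi(\xi))^c,(\xi_{A^*})^c\big)|_{{\mathcal T}^AA^*}$ is $\{(\Phi,T\Phi^*)_{\exp t\xi}\}$, which is the flow of $(\psi^T(\xi))^c$.) I expect the only real difficulty to be keeping the bookkeeping straight in this identification — the two prolongation structures and the embedding ${\mathcal T}^AA^*\hookrightarrow A\times TA^*$ — rather than any conceptual obstacle.

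Finally, (e): freeness is immediate, since $(\Phi,T\Phi^*)_g(a_x,v_{\alpha_x})=(a_x,v_{\alpha_x})$ forces $\Phi_g(a_x)=a_x$ and hence $g=e$, $\Phi$ being free; and properness follows from that of $\Phi$, for instance through the $G$-equivariant projection ${\mathcal T}^AA^*\to A$, $(a_x,v_{\alpha_x})\mapsto a_x$, or by regarding \eqref{ptp} as the restriction of the proper action $\Phi\times T\Phi^*$ to the closed invariant submanifold ${\mathcal T}^AA^*$. Assembling (a)--(e) shows that $(\Phi,T\Phi^*)$ is a free and proper left action of $G$ on ${\mathcal T}^AA^*$ by automorphisms of the symplectic-like Lie algebroid whose infinitesimal generators are the complete lifts of $\psi^T$, i.e.\ an action by complete lifts with respect to $\psi^T$.
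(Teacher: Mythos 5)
Your proposal is correct and follows the same overall architecture as the paper's proof: well-definedness via $\rho\circ\Phi_g=T\phi_g\circ\rho$ and $\tau_*\circ\Phi_g^*=\phi_g\circ\tau_*$, identification of the infinitesimal generator, preservation of $\lambda_A$ and hence of $\Omega_A$ once each $(\Phi,T\Phi^*)_g$ is known to be a Lie algebroid morphism, and the (immediate) freeness and properness. The one place where you diverge is the crux, step (d): the paper does not restrict the product action to ${\mathcal T}^AA^*\subset A\times TA^*$ and compute in the basis $\{{\mathcal X}_I,{\mathcal Y}^I\}$; instead it identifies $(\psi^T(\xi))^c$ with $(\psi(\xi)^c,\xi_{A^*}^c)$ directly from the defining characterization of the complete lift, namely that this pair is $\tau_{{\mathcal T}^AA^*}$-projectable onto $\rho_{{\mathcal T}^AA^*}(\psi^T(\xi))=\xi_{A^*}$ and that ${\mathcal L}^{{\mathcal T}^AA^*}_{(\psi(\xi),\xi_{A^*})}(\alpha,\beta)=({\mathcal L}^A_{\psi(\xi)}\alpha,{\mathcal L}_{\xi_{A^*}}\beta)$ on projectable sections $(\alpha,\beta)$ of $({\mathcal T}^AA^*)^*$, which follows in one line from \eqref{dTA}. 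This is shorter and coordinate-free, whereas your primary route is a term-by-term matching that you announce but do not execute. Your alternative flow argument is, as stated, mildly circular: you assert that $\{(\Phi,T\Phi^*)_{\exp t\xi}\}$ ``is the flow of $(\psi^T(\xi))^c$,'' but that is exactly the assertion to be proved; to close it you would still need to check that differentiating the pullback of linear functions $\widehat{(\alpha,\beta)}$ along this flow yields $\widehat{{\mathcal L}^{{\mathcal T}^AA^*}_{\psi^T(\xi)}(\alpha,\beta)}$, which brings you back to the paper's computation. Neither of these is a fatal defect --- the coordinate check does go through --- but the paper's use of the intrinsic characterization is the cleaner way to finish, and your explicit verification that $\psi^T$ is a Lie algebra anti-morphism (which the paper leaves implicit) is a welcome addition.
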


\begin{proof}
Note that the map $(\Phi,T\Phi^*)$ is well-defined. In fact, since $\Phi$ is an action by complete lifts, then $\Phi_g:A\to A$ is a Lie algebroid isomorphism, for all $g\in G.$  So, using (\ref{dAF}) with $k=0$, we have that
\begin{equation}\label{eq1}
\rho(\Phi_g(a_x))=T_x\phi_g(\rho(a_x)), \mbox{ for all $g\in G,$ $x\in M$ and $a_x\in A_x.$}
\end{equation}
Moreover,
\begin{equation}\label{eq2}
T_{\Phi^*_{g}(\alpha_x)}\tau_{*}(T_{\alpha_x}\Phi^*_{g}(v_{\alpha_x}))=T_{\alpha_x}(\phi_g\circ \tau_{*})(v_{\alpha_x})=T_x\phi_g(\rho(a_x)),\end{equation}
for all $v_{\alpha_x}\in T_{\alpha_x}A^*.$
Thus, from (\ref{eq1}) and (\ref{eq2}), we deduce that 
$$
T_{\Phi^*_{g}(\alpha_x)}\tau_{*}(T_{\alpha_x}\Phi^*_{g}(v_{\alpha_x}))=\rho(\Phi_g(a_x))$$
for all $(a_x,v_{\alpha_x})\in {\mathcal T}_{\alpha_x}^AA^*$, that is, $$(\Phi,T^*\Phi)(g,(a_x,v_{\alpha_x}))\in {\mathcal T}^A_{\Phi^*_g(\alpha_x)}A^*.$$

Obviously,  $(\Phi,T\Phi^*)$ is a free and proper action. We will now show that this action on ${\mathcal T}^AA^*$ is by complete lifts.  Firstly, note that the map 
${\psi}^T:{\mathfrak g}\to \Gamma({\mathcal T}^AA^*)$ is well defined.  In fact, since $\rho(\psi(\xi))$ is just  the infinitesimal generator $\xi_M$ of $\xi$ with respect to the action $\phi:G\times M\to M$  and  the projection  $\tau_*:A^*\to M$ is equivariant, we have that
$$\rho(\psi(\xi))=T\tau_*(\xi_{A^*}),\mbox{ for all }\xi\in {\mathfrak g}.$$

On the other hand, the infinitesimal generator $\xi_{{\mathcal T}^AA^*}\in {\mathfrak X}({{\mathcal T}^AA^*})$ of $\xi\in {\mathfrak g}$ with respect to the action $(\Phi,T\Phi^*)$ is the pair $(\xi_{A}, \xi_{A^*}^c),$ where $\xi_A$ is the infinitesimal generator of $\xi\in {\mathfrak g}$ with respect to $\Phi$  and $\xi^c_{A^*}$ is the complete lift of $\xi_{A^*}.$ Moreover, the complete lift of ${\psi}^T(\xi)$ with respect to the Lie algebroid ${\mathcal T}^AA^*$  is just $(\psi(\xi)^c,\xi_{A^*}^c).$  This is a consequence of the fact that $(\psi(\xi)^c,\xi_{A^*}^c)\in {\frak X}({\mathcal T}^AA^*)$  is $\tau_{{\mathcal T}^AA^*}-$projectable on $\rho_{{\mathcal T}^AA^*} (\psi(\xi), \xi_{A^*})=\xi_{A^*}$ and that, from (\ref{dTA}), we deduce that 
$${\mathcal L}^{{\mathcal T}^AA^*}_{(\psi(\xi),\xi_{A^*})}(\alpha,\beta)=({\mathcal L}^{A}_{\psi(\xi)}\alpha,{\mathcal L}_{\xi_{A^*}}\beta)$$
for every  projectable section $(\alpha,\beta)$ on  $\Gamma(({\mathcal T}^AA^*)^*)$.  Here ${\mathcal L}$ is the standard Lie derivative.

Therefore, $(\Phi,T\Phi^*)$ is an action by complete lifts and consequently by automorphisms of Lie algebroids. Finally, a direct computation, using (\ref{Liouville}),  proves that the action $(\Phi,T\Phi^*)$ preserves the  Liouville section $\lambda_A,$ i.e
$$ (\Phi,T\Phi^*)_g^*\lambda_A=\lambda_A, \mbox{ for all $g\in G.$}$$
Thus, using (\ref{dAF}) and the fact that  $(\Phi,T\Phi^*)_g$ is an automorphism of Lie algebroids,  we conclude that $(\Phi,T\Phi^*)_g$ preserves the canonical symplectic-like section $\Omega_A$ of ${\mathcal T}^AA^*.$
\end{proof}

\emph{The momentum map for the canonical action of $G$ on the Lie algebroid   ${\mathcal T}^AA^*\to A^*$.} Denote by  $J_{A^*}:A^*\to {\mathfrak g}^*$ the map given by
\begin{equation}\label{A*}
J_{A^*}(\alpha_x)(\xi)=\alpha_x(\psi(\xi)(x)),\;\;\;\mbox{with } x\in M,\; \alpha_x\in A_x^* \mbox{ and }\xi\in {\mathfrak g}.
\end{equation}
Then, we have the following result
\begin{proposition}\label{JA*}
The map $J_{A^*}:A^*\to {\mathfrak g}^*$ is an equivariant  momentum map for the  Poisson action $\Phi^*:G\times A^*\to A^*.$
\end{proposition}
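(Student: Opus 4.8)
**Proof proposal for Proposition \ref{JA*}.**

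The plan is to verify directly the two defining conditions of an equivariant momentum map for the Poisson action $\Phi^*:G\times A^*\to A^*$ with respect to the linear Poisson structure $\Pi_{A^*}$: namely that $J_{A^*}$ is $G$-equivariant (intertwining $\Phi^*$ with the coadjoint action $Coad^G$), and that for each $\xi\in\mathfrak{g}$ the Hamiltonian vector field of the component function $(J_{A^*})_\xi\in C^\infty(A^*)$ is the infinitesimal generator $\xi_{A^*}$ of $\Phi^*$. The first thing I would observe is that the component function $(J_{A^*})_\xi$, defined by $(J_{A^*})_\xi(\alpha_x)=\alpha_x(\psi(\xi)(x))$, is precisely the linear function $\widehat{\psi(\xi)}$ on $A^*$ associated with the section $\psi(\xi)\in\Gamma(A)$, as introduced in \eqref{P-lineal}. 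This identification is the crux: it immediately lets me invoke the computation already carried out in the proof of Proposition \ref{AP}.

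For the Hamiltonian condition: by Proposition \ref{AP} (and its proof), the Hamiltonian vector field $H^{\Pi_{A^*}}_{\widehat{\psi(\xi)}}$ equals the complete lift $(\psi(\xi))^{*c}$, which in turn is exactly the infinitesimal generator $\xi_{A^*}$ of the Poisson action $\Phi^*$. Since $(J_{A^*})_\xi=\widehat{\psi(\xi)}$, we conclude $H^{\Pi_{A^*}}_{(J_{A^*})_\xi}=\xi_{A^*}$, which is the required compatibility. So this half reduces to citing earlier results once the identification of $(J_{A^*})_\xi$ with $\widehat{\psi(\xi)}$ is made.

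For equivariance I would unwind the definitions of $\Phi^*$ and the dual action. Recall $(\Phi^*_g)_{|A^*_x}=((\Phi_{g^{-1}})_{|A_{\phi_g(x)}})^*:A^*_x\to A^*_{\phi_g(x)}$. Then for $\alpha_x\in A^*_x$ and $\xi\in\mathfrak g$ one computes
\[
J_{A^*}(\Phi^*_g(\alpha_x))(\xi)=(\Phi^*_g(\alpha_x))(\psi(\xi)(\phi_g(x)))=\alpha_x\big(\Phi_{g^{-1}}(\psi(\xi)(\phi_g(x)))\big).
\]
Now apply Proposition \ref{prop3.2} (equation \eqref{eqlemma}), which gives $\Phi_{g}(\psi(Ad^G_{g^{-1}}\xi)(x))=\psi(\xi)(\phi_g(x))$; replacing $g$ by $g^{-1}$ and $\xi$ by $Ad^G_g\xi$ yields $\Phi_{g^{-1}}(\psi(\xi)(\phi_g(x)))=\psi(Ad^G_g\xi)(x)$. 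Substituting back,
\[
J_{A^*}(\Phi^*_g(\alpha_x))(\xi)=\alpha_x(\psi(Ad^G_g\xi)(x))=J_{A^*}(\alpha_x)(Ad^G_g\xi)=\big(Coad^G_g(J_{A^*}(\alpha_x))\big)(\xi),
\]
so $J_{A^*}\circ\Phi^*_g=Coad^G_g\circ J_{A^*}$, which is the equivariance.

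The main obstacle, such as it is, is purely bookkeeping: getting the inverses and the direction of the dual/coadjoint actions consistent, since $\Phi^*$ is built from $\Phi_{g^{-1}}$ and one must be careful that the application of \eqref{eqlemma} is with the correctly transposed group element. There is no analytic difficulty; everything rests on the algebraic identity \eqref{eqlemma} from Proposition \ref{prop3.2} and on the already-established fact (Proposition \ref{AP}) that complete lifts to $A^*$ are Hamiltonian vector fields of the associated linear functions.
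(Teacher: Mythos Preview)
Your approach is exactly the paper's: identify $(J_{A^*})_\xi$ with $\widehat{\psi(\xi)}$, invoke Proposition~\ref{AP} for the momentum condition, and use Proposition~\ref{prop3.2} for equivariance. However, your equivariance computation contains a bookkeeping slip of precisely the kind you warn about. Applying $\Phi_{g^{-1}}$ to both sides of \eqref{eqlemma} gives directly
\[
\Phi_{g^{-1}}\big(\psi(\xi)(\phi_g(x))\big)=\psi\big(Ad^G_{g^{-1}}\xi\big)(x),
\]
not $\psi(Ad^G_g\xi)(x)$ as you write; your stated substitution (``replace $g$ by $g^{-1}$ and $\xi$ by $Ad^G_g\xi$'') does not produce the claimed identity. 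Consequently the next line should read
\[
J_{A^*}(\Phi^*_g(\alpha_x))(\xi)=\alpha_x\big(\psi(Ad^G_{g^{-1}}\xi)(x)\big)=J_{A^*}(\alpha_x)(Ad^G_{g^{-1}}\xi)=\big(Coad^G_g(J_{A^*}(\alpha_x))\big)(\xi),
\]
using the paper's convention $(Coad^G_g\mu)(\xi)=\mu(Ad^G_{g^{-1}}\xi)$ for the left coadjoint action. Your final conclusion is correct only because a second error (taking $(Coad^G_g\mu)(\xi)=\mu(Ad^G_g\xi)$) cancels the first. The structure of the argument is fine; just correct these two compensating $g\leftrightarrow g^{-1}$ slips.
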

\begin{proof}
From Proposition \ref{AP} we have that if  $\Pi_{A^*}$ is the linear Poisson structure on $A^*$ and $\widehat{\psi(\xi)}$ is the linear function associated with the section $\psi(\xi)\in \Gamma(A)$, for each $\xi \in {\mathfrak g}, $ the Hamiltonian vector field $$H_{\widehat{\psi(\xi)}}^{\Pi_{A^*}}=i_{d\widehat{\psi(\xi)}}\Pi_{A^*}\in {\mathfrak X}(A^*)$$ is just the infinitesimal generator $\xi_{A^*}\in {\mathfrak X}(A^*)$  of $\xi$ with respect to the action $\Phi^*$. Note that the function $(J_{A^*})_\xi:A^*\to {\Bbb R}$  given by 
$$(J_{A^*})_\xi(\alpha_x)=(J_{A^*}(\alpha_x))(\xi)$$
is just $\widehat{{\psi}(\xi)}.$ Thus, $J_{A^*}$ is a momentum map for the Poisson action $\Phi^*:G\times A^*\to A^*.$

Now, we will prove that $J_{A^*}$ is equivariant, i.e.,
$$J_{A^*}\circ \Phi_{g}^*=Coad_{g}^G\circ J_{A^*}.$$
 Indeed, if $x\in M$ and $\alpha_x\in A^*$, then, from  Proposition \ref{prop3.2}, we have that 
$$
\begin{array}{rcl}
J_{A^*}(\Phi_{g}^*(\alpha_x))(\xi)&=&(\Phi^*_{g}(\alpha_x))(\psi(\xi)(\phi_g(x)))=\alpha_x(\Phi_{g^{-1}}(\psi(\xi)(\phi_g(x)))\\[5pt]&=&\alpha_x(\psi(Ad^G_{g^{-1}}\xi)(x))=J_{A^*}(\alpha_x)(Ad^G_{g^{-1}}\xi)\\&=&((Coad^G_{g})(J_{A^*}(\alpha_x))(\xi),
\end{array}
$$
for all $\xi\in {\mathfrak g}.$
\end{proof}

%%%%%%%%%%%%%%%%%%%%%%%%%%%%%
Now, using Lemma \ref{induced-Poisson-action}, we have  that the map
\begin{equation}\label{jT}
J_{A^*}^T:{\mathcal T}^AA^*\to {\mathfrak g}^*\times {\mathfrak g}^*,\;\,\; J_{A^*}^T(a_x,v_{\alpha_x})= (T_{\alpha_x}J_{A^*}(v_{\alpha_x}), J_{A^*}(\alpha_x))
\end{equation}
is equivariant with respect to the action $(\Phi,T\Phi^*)^T:TG\times {\mathcal T}^AA^*\to {\mathcal T}^AA^*.$

From the injectivity of $\psi_x$ (see Remark \ref{phi}),  it follows that the restriction of $J_{A^*}:A^*\to {\mathfrak g}^*$ to $A_x^*$ is a linear epimorphism and therefore, for all $\alpha_x\in A^*_x$ the restriction of the  tangent map $T_{\alpha_x}J_{A^*}:T_{\alpha_x}A^*\to T_{J_{A^*}(\alpha_x)}{\mathfrak g}^*\cong {\mathfrak g}^*$ to $T_{\alpha_x}A_x^*$ is surjective. Thus, all the elements of ${\mathfrak g}^*$ are regular values of $J_{A^*}$ and  $$T_{\alpha_x}J_{A^*}\circ (\rho_{{\mathcal T}^AA^*)_{\alpha_x}}:{{\mathcal T}_{\alpha_x}^AA^*}\to {\mathfrak g}^*$$
is surjective, for all $\alpha_x\in A^*_x.$ Note that $$T_{\alpha_x}A^*_x=\ker T_{\alpha_x}\tau_{*}\subseteq (\rho_{{\mathcal T}^AA^*})_{\alpha_x}({\mathcal T}_{\alpha_x}^AA^*).$$

In conclusion if  $\mu\in {\mathfrak g}^*, $ then $J_{A^*}^{-1}(\mu)$ is a regular submanifold of $A^*$ and $(J^T_{A^*})^{-1}(0,\mu)$ is a Lie subalgebroid of ${\mathcal T}^{A}A^*$ over $J_{A^*}^{-1}(\mu)$ (see Proposition \ref{prereduction}).  In fact, $(J^T_{A^*})^{-1}(0,\mu)$  is just the prolongation $${\mathcal T}^A(J_{A^*}^{-1}(\mu))$$ of the Lie algebroid $A$ over the restriction  $(\tau_*)_{|(J_{A^*})^{-1}(\mu)}:J_{A^*}^{-1}(\mu)\to M$ of $\tau_*:A^*\to M$ to the submanifold $J_{A^*}^{-1}(\mu)$. Note that $J_{A^*}^{-1}(\mu)$ is an affine subbundle of $A^*$ over $M$ and that $(\tau_*)_{|J_{A^*}^{-1}(\mu)}:J_{A^*}^{-1}(\mu)\to M$ is the projection.

\section{The reduction of the canonical cover of a fiberwise 
linear Poisson structure}
%%%%%%%%%%%%%%%%%%%%%%%%%%%%%%%%%%%%%%CASO I
Let $(A,\lcf\cdot,\cdot\rcf, \rho)$ be a Lie algebroid over the manifold $M$  and  $\tau:A\to M$ the vector bundle projection. Suppose that $\Phi:G\times A\to A$ is a  free and proper action of a connected Lie group $G$ by complete lifts with respect to the Lie algebra anti-morphism $\psi: {\mathfrak g}\to A.$  In the previous sections, we have shown that in this situation, we have a free and proper canonical action $(\Phi, T\Phi^*):G\times {\mathcal T}^AA^*\to {\mathcal T}^AA^*$  of the Lie group $G$ on the symplectic-like Lie algebroid ${\mathcal T}^AA^*$ by complete lifts with respect to the Lie algebra anti-morphism $\psi^T:{\mathfrak g}\to \Gamma({\mathcal T}^AA^*)$ given in (\ref{pt}). In addition, we have  an equivariant momentum map $J_{A^*}:A^*\to {\mathfrak g}^*$ on $A^*$ with respect to the left Poisson action $\Phi^*:G\times A^*\to A^*$ of $G$ on $A^*.$ 

If $\mu$ is an element of ${\mathfrak g}^*$ then  we obtain,  in a natural,  way a free and proper action $(\Phi, T\Phi^*):G_\mu\times {\mathcal T}^AJ_{A^*}^{-1}(\mu)\to {\mathcal T}^AJ_{A^*}^{-1}(\mu)$ of the isotropy group of $\mu$ on the Lie algebroid ${\mathcal T}^AJ_{A^*}^{-1}(\mu)$ by restriction. Now, using Theorem \ref{2.8co}, we conclude that the reduced vector bundle 
$$({\mathcal T}^AA^*)_\mu={\mathcal T}^AJ_{A^*}^{-1}(\mu)/TG_\mu\to J_{A^*}^{-1}(\mu)/G_\mu$$
is a symplectic-like Lie algebroid with symplectic-like section $\Omega_\mu$ characterized by 
$$\widetilde{\pi}_\mu^*\Omega_\mu=\widetilde{\iota}_\mu^*\Omega_A$$
where $\widetilde{\pi}_\mu: {\mathcal T}^AJ_{A^*}^{-1}(\mu)\to ({\mathcal T}^AA^*)_\mu$ is the canonical projection, $\widetilde{\iota}_\mu:{\mathcal T}^AJ_{A^*}^{-1}(\mu)\to {\mathcal T}^AA^*$ is the  inclusion and $\Omega_A$ is the standard symplectic-like structure on ${\mathcal T}^AA^*.$

In what follows, we will describe this reduced Lie algebroid $({\mathcal T}^AA^*)_\mu$.  Firstly, we will discuss the case $\mu=0.$ 

\subsection{The case  $\mathbf{\mu=0}$}\label{prol1} Note that, under this assumption,  the isotropy group $G_\mu$ is just  $G.$

  We will prove that the reduced symplectic-like Lie algebroid $({\mathcal T}^A{A^*})_0={\mathcal T}^AJ_{A^*}^{-1}(0)/TG\to J_{A^*}^{-1}(0)/G$ is  the canonical cover of a fiberwise linear Poisson structure on the dual $A_0^*$ of a certain Lie algebroid  $A_0$ over  $M/G.$

\medskip

{\it Description of the Lie algebroid $A_0$.} The Lie algebroid $A_0$ over $M/G$ is the space of orbits  $A/TG$ of the affine action of $TG$ on $A$ (see Theorem  \ref{reduction}).  As we know (see the proof of Theorem \ref{reduction}), if 
$\widetilde{\pi}: A\to A_0=A/TG$ and $\pi:M\to M/G$
are the canonical projections and $(\lcf\cdot,\cdot\rcf_{A_0},\rho_{A_0})$ is the Lie algebroid structure on $A_0$ then  
\begin{equation}\label{A00}
\lcf X_0,Y_0\rcf_{A_0}\circ \pi=\widetilde{\pi}(\lcf X,Y\rcf),\;\;\; \rho_{A_0}(X_0)=T\pi(\rho(X))
\end{equation}
for $X_0,Y_0\in \Gamma(A_0)$ and $X,Y\in \Gamma(A)$ satisfying 
$$X_0\circ \pi=\widetilde{\pi}\circ X,\;\;\; Y_0\circ \pi=\widetilde{\pi}\circ Y.$$

Note that with this structure,  $\widetilde{\pi}:A\to A_0$ is an epimorphism of Lie algebroids.

Now, we will prove that the vector bundle $A_0=A/TG\to M/G$ is  isomorphic to $$(J_{A^*}^{-1}(0)/G)^*\to M/G.$$ In fact, one may  easily test that  the submanifold $J_{A^*}^{-1}(0)$ is just the annihilator $(\psi({\mathfrak g}))^0$ of $\psi({\mathfrak g})$. Therefore, the restriction $\tau_{*}^0=\tau_{*|J_{A^*}^{-1}(0)}:J_{A^*}^{-1}(0)\to M$ of $\tau_*:A^*\to M$ to
this submanifold is a vector bundle over $M$. Moreover, a direct computation proves that this vector bundle  is isomorphic to the dual vector bundle $(A/\psi({\mathfrak g}))^*$ of $A/\psi({\mathfrak g})\to M.$

Therefore,  using the equivalences (\ref{tresformas}),  we deduce that the three vector bundles $$A_0=A/TG\to M/G, 
\;\;\;\;\;(A/\psi({\mathfrak g}))/G\to M/G,\;\;\;\;\; (J_{A^*}^{-1}(0)/G)^*\cong (J_{A^*}^{-1}(0))^*/G\to M/G$$ are isomorphic. Thus, we may induce  isomorphic Lie algebroid structures on these vector bundles. 

\medskip 

{\it The description of the Lie algebroid isomorphism between $({\mathcal T}^{A}A^*)_0$ and ${\mathcal T}^{A_0}{A_0^*}$.}   In what follows we identify $A_0=A/TG$ with $(J_{A^*}^{-1}(0)/G)^*\cong (J_{A^*}^{-1}(0))^*/G.$  Under this identification, we denote by $\varphi:A\to (J_{A^*}^{-1}(0)/G)^*$ the epimorphism of vector bundles corresponding to the quotient projection $A\to A/TG.$

Let us consider
the following epimorphism of vector bundles over $\pi_0:J_{A^*}^{-1}(0)\to A_0^*=J_{A^*}^{-1}(0)/G$
\begin{equation}\label{varphiT}
\varphi^T:{\mathcal T}^{A}J_{A^*}^{-1}(0)\to {\mathcal T}^{A_0}A_0^*, \;\;\; \ (a_x,v_{\alpha_x})\mapsto (\varphi(a_x), T_{\alpha_x}\pi_0(v_{\alpha_x})).
\end{equation}
Note that, using (\ref{A00}) and the fact that $\widetilde\tau_*^0\circ \pi_0=\pi\circ \tau_*^0, $ we have that 
 $\varphi^T$ is well-defined. Here $\widetilde\tau_*^0:J_{A^*}^{-1}(0)/G\to M/G$ is the dual vector bundle of 
 $(J_{A^*}^{-1}(0)/G)^*\to M/G$. 

Now, since  that $\varphi:A\to A_0$ is a Lie algebroid epimorphism, then  $\varphi^T$ is also a Lie algebroid epimorphism. Furthermore, it is easy to prove, using that $\varphi$ is $TG$-invariant,  that  $\varphi^T$  is $TG$-invariant with respect to the action $(\Phi,T\Phi^*)^T$ of $TG$ restricted to ${\mathcal T}^AJ_{A^*}^{-1}(0).$ In fact, 
$$
\begin{array}{rcl}
\varphi^T((\Phi, T\Phi^*)^T_{(g,\xi)}(a_x,v_{\alpha_x}))&=& (\varphi(\Phi_g(a_x)) + \varphi(\Phi_g(\psi(\xi))), T_{\alpha_x}\pi_0(T_x\Phi_g(v_{\alpha_x}+\xi_{A^*})))\\&=& (\varphi(a_x), T_{\alpha_x}\pi_0(v_{\alpha_x}))=\varphi^T(a_x,v_{\alpha_x})
\end{array}
$$
for all $(g,\xi)\in G\times {\mathfrak g}\cong TG$ and $(a_x,v_{\alpha_x})\in {\mathcal T}^A_{\alpha_x}J_{A^*}^{-1}(0).$ Note that   $$\varphi(\psi(\xi))=\varphi(\Phi_{(e,\xi)}(0))=0.$$

Thus,  we have the following Lie algebroid epimorphism $\bar{\varphi}^T$ between $({\mathcal T}^AA^*)_0$ and ${\mathcal T}^{A_0}A_0^*$ over the identity of $A_0^*$ 

\begin{equation}\label{barvarphiT}
\begin{picture}(375,60)(60,40)
\put(200,20){\makebox(0,0){$A_0^*$}} \put(260,25){$Id$}
\put(215,20){\vector(1,0){90}} \put(320,20){\makebox(0,0){$A^*_0$}}
\put(185,50){$\tau_0^T$} \put(200,75){\vector(0,-1){45}}
\put(320,50){$\tau_{{\mathcal T}^{A_0}A_0^*}$} \put(310,75){\vector(0,-1){45}}
\put(195,80){\makebox(0,0){$({\mathcal T}^AA^*)_0$}} \put(260,85){$\bar\varphi^T$}
\put(215,80){\vector(1,0){90}} \put(320,80){\makebox(0,0){${\mathcal T}^{A_0}A_0^*$}}
\end{picture}
\end{equation}

\vspace{30pt}
 
 In fact, 
 \begin{equation}\label{5.33'}
 \bar{\varphi}^T[(a_x,v_{\alpha_x})]=(\varphi(a_x), (T_{\alpha_x}\pi_0)(v_{\alpha_x})), \mbox{ for } (a_x,v_{\alpha_x})\in {\mathcal T}^A_{\alpha_{x}} J_{A^*}^{-1}(0).
 \end{equation}

Finally, we will prove that $\bar\varphi^T$ is an isomorphism, that is, $\bar\varphi^T$ is injective.

If $(e_x,v_{\alpha_x})\in \ker \varphi^T$ then $e_x\in \ker\varphi=\psi_x({\mathfrak g})$ and $v_{\alpha_x}$ is a vertical vector with respect to $\pi_0:(J_{A^*})^{-1}(0)\to (J_{A^*})^{-1}(0)/G.$ Then, there are $\xi,\xi'\in {\mathfrak g}$ such that
$$e_x=\psi_x(\xi) \mbox{ and } v_{\alpha_x}=\xi'_{A^*}(\alpha_x).$$
Then, 
$$\xi_M(x)=\rho(e_x)=T_{\alpha_x}\tau^0_*(v_{\alpha_x})=\xi'_M(x)$$
and, since $\phi:G\times M\to M$ is a free action, we conclude that $\xi=\xi'.$ Therefore,
$$(e_x,v_{\alpha_x})=(\psi_x(\xi), \xi_{A^*}(\alpha_x))=(\Phi,T\Phi^*)^T((e,\xi), (0,0))$$
where $e$ is the identity element of $G$. Thus, $\bar\varphi^T$ is injective.

\medskip

{\it The Lie algebroid isomorphism between $({\mathcal T}^{A}{A^*})_0$ and ${\mathcal T}^{A_0}{A_0^*}$ is canonical.}
We will see that $\bar\varphi^T$ is canonical, i.e.
\begin{equation}\label{symplectic}
(\bar\varphi^T)^*\Omega_{A_0}=\Omega_0,
\end{equation}
where  $\Omega_0$ is the reduced symplectic-like structure on $({\mathcal T}^{A}{A^*})_0$ given in Theorem \ref{2.8co} and $\Omega_{A_0}$ is the canonical symplectic-like structure on ${\mathcal T}^{A_0}A_0^*$. 

From (\ref{Liouville}), (\ref{varphiT}) and the definition of the morphism $\varphi:A\to A_0$, we obtain that 
$$(\varphi^T)^*\lambda_{A_0}=\widetilde{\iota}_0^*\lambda_A$$
where $\lambda_{A_0}$ (respectively, $\lambda_{A}$) is the Liouville section of $A_0\to M/G$ (respectively, of $A\to M$) and $\widetilde{\iota}_0:{\mathcal T}^AJ_{A^*}^{-1}(0)\to {\mathcal T}^AA^*$ is the inclusion. 

Thus, since $\varphi^T$ and $\widetilde{\iota}_0$ are Lie algebroid morphisms, we obtain that 
$$(\varphi^T)^*\Omega_{A_0}=\widetilde{\iota}_0^*\Omega_A.$$
On the other hand, if $\widetilde{\pi}_0: {\mathcal T}^AJ_{A^*}^{-1}(0)\to ({\mathcal T}^AA^*)_0$ is the canonical projection, it is clear that $\bar\varphi^T\circ \widetilde{\pi}_0=\varphi^T$ which implies that 
$$\widetilde{\pi}_0^*((\bar\varphi^T)^*\Omega_{A_0})=\widetilde{\pi}_0^*\Omega_0$$
and, therefore, 
\begin{equation}\label{omegaT}
(\bar{\varphi}^T)^*\Omega_{A_0}=\Omega_0.
\end{equation} 

In the following theorem we summarize the results obtained in the case $\mu=0.$

\begin{theorem}\label{p1}
Let $(A,\lcf\cdot,\cdot\rcf,\rho)$ be a Lie algebroid on the manifold $M$ and $\Phi:G\times A\to A$  a free and proper action of a connected Lie group by complete lifts. Then, the reduced symplectic-like Lie algebroid $$({\mathcal T}^AA^*)_0=({\mathcal T}^AJ_{A^*}^{-1}(0))/TG\to J_{A^*}^{-1}(0)/G$$
is canonically isomorphic to the Lie algebroid ${\mathcal T}^{A_0}A_0^*$, equipped with the standard symplectic-like structure, where the Lie algebroid  $A_0$ is the vector bundle  
$$A_0=A/TG\to M/G$$
endowed with the quotient Lie algebroid structure characterized by (\ref{A00}). 
\end{theorem}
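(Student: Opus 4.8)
The proof is a matter of collecting the constructions made in the discussion preceding the statement, so the plan is to present them in order and to point out the three things that actually require verification.

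First I would fix the identification of the total space $A_0 = A/TG$ with $(J_{A^*}^{-1}(0)/G)^*$. The key observation is that $J_{A^*}^{-1}(0)$ is exactly the annihilator $\psi(\mathfrak g)^0$, so the restricted projection $\tau_*^0\colon J_{A^*}^{-1}(0)\to M$ is canonically the dual vector bundle of $A/\psi(\mathfrak g)\to M$; combined with the equivalences in \eqref{tresformas} this gives that $A_0$, $(A/\psi(\mathfrak g))/G$ and $(J_{A^*}^{-1}(0)/G)^*$ are mutually isomorphic vector bundles over $M/G$, and I would transport the Lie algebroid structure \eqref{A00} across these isomorphisms. With this identification fixed, write $\varphi\colon A\to A_0$ for the quotient epimorphism.

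Second, I would introduce the vector bundle morphism $\varphi^T\colon \mathcal T^A J_{A^*}^{-1}(0)\to \mathcal T^{A_0}A_0^*$ of \eqref{varphiT}, $(a_x,v_{\alpha_x})\mapsto(\varphi(a_x),T_{\alpha_x}\pi_0(v_{\alpha_x}))$, and check: (a) it is well defined, using \eqref{A00} and $\widetilde\tau_*^0\circ\pi_0=\pi\circ\tau_*^0$ to verify the anchor-matching condition that defines $\mathcal T^{A_0}A_0^*$; (b) it is a Lie algebroid epimorphism, which follows because $\varphi$ is one and $\varphi^T$ operates componentwise on projectable sections, respecting both $\lcf\cdot,\cdot\rcf$ and the anchors (this is where one verifies \eqref{dAF}, most cleanly on a local projectable basis adapted to $\varphi$); (c) it is $TG$-invariant for the restricted action $(\Phi,T\Phi^*)^T$, by the short computation using $\varphi\circ\Phi_g=\varphi$ and $\varphi(\psi(\xi))=0$. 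Quotienting by $TG$ then yields the Lie algebroid epimorphism $\bar\varphi^T\colon(\mathcal T^A A^*)_0\to\mathcal T^{A_0}A_0^*$ over $\mathrm{id}_{A_0^*}$ given by \eqref{5.33'}. To see $\bar\varphi^T$ is an isomorphism I would compute $\ker\varphi^T$: any element there is of the form $(\psi_x(\xi),\xi'_{A^*}(\alpha_x))$ with $\xi,\xi'\in\mathfrak g$, and comparing anchors together with freeness of $\phi$ forces $\xi=\xi'$, so the element lies in the $TG$-orbit of the zero of $\mathcal T^A_{0_x}J_{A^*}^{-1}(0)$ and hence is killed by $\widetilde\pi_0$; thus $\bar\varphi^T$ is injective.

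Finally, to obtain that the isomorphism is canonical, $(\bar\varphi^T)^*\Omega_{A_0}=\Omega_0$, I would first check at the level of Liouville sections that $(\varphi^T)^*\lambda_{A_0}=\widetilde\iota_0^*\lambda_A$, a direct computation from the defining formula \eqref{Liouville} and the definition of $\varphi$. Applying the Lie algebroid differentials and using that $\varphi^T$ and the inclusion $\widetilde\iota_0$ are Lie algebroid morphisms (hence intertwine the relevant differentials) gives $(\varphi^T)^*\Omega_{A_0}=\widetilde\iota_0^*\Omega_A$. Since $\bar\varphi^T\circ\widetilde\pi_0=\varphi^T$ and, by Theorem \ref{2.8co}, $\widetilde\pi_0^*\Omega_0=\widetilde\iota_0^*\Omega_A$, the surjectivity of the bundle map underlying $\widetilde\pi_0$ lets me cancel $\widetilde\pi_0^*$ and conclude $(\bar\varphi^T)^*\Omega_{A_0}=\Omega_0$. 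I expect the main obstacle to be item (b): verifying carefully that $\varphi^T$ is genuinely a morphism between the two prolongations, together with making sure in the first step that all three Lie algebroid structures appearing in \eqref{tresformas} are matched by the identifying isomorphisms.
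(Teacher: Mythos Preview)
Your proposal is correct and follows essentially the same approach as the paper: the argument in the paper is precisely the discussion preceding the theorem, organized into the same three blocks you describe (identification of $A_0$ with $(J_{A^*}^{-1}(0)/G)^*$ via $J_{A^*}^{-1}(0)=\psi(\mathfrak g)^0$; construction of $\varphi^T$ and of the induced isomorphism $\bar\varphi^T$ with the kernel computation using freeness of $\phi$; and the comparison of Liouville sections to conclude $(\bar\varphi^T)^*\Omega_{A_0}=\Omega_0$). The only difference is emphasis: the paper dispatches your point (b) in one line by invoking that $\varphi$ is a Lie algebroid epimorphism, whereas you flag it as the step deserving the most care.
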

%%%%%%%%%%%%%%%%%%%%%%%%%%%%%%%%%%%%%%

\subsection{The case 
$\mathbf{G_\mu=G}$.}\label{prol2}
Suppose that the assumptions of Theorem \ref{p1} hold. Additionally,  we consider a principal $G$-connection ${\mathcal A}: TM\to {\mathfrak g}$ for the corresponding principal bundle $\pi:M\to M/G.$ In such a case we have a vector bundle morphism ${\mathcal A}^A:A\to {\mathfrak g}$ given by 
$${\mathcal A}^A(a_x)={\mathcal A}(\rho_x(a_x)),\;\,\;\forall a_x\in A_x,$$
which satisfies the following properties:
\begin{enumerate}
\item ${\mathcal A}^A$ is equivariant with respect to $\Phi:G\times A\to A$ and the adjoint action, that is, 
$${\mathcal A}^A(\Phi_g(a_x))=Ad_g^G({\mathcal A}^A(a_x)),\;\;\; \forall a_x\in A_x,$$
\item ${\mathcal A}^A(\psi(\xi)(x))=\xi,$   for all $\xi\in {\mathfrak g}$ and $x\in M.$
\end{enumerate}
 
Note that if $\pi:M\to M/G$ is the quotient projection then we have 
$$TQ=V\pi\oplus H\mbox{ and } A=\psi({\mathfrak g})\oplus H^A,$$
where $V\pi$ is the vertical bundle of $\pi$ and $H^A$ (respectively, $H$) is the vector bundle on $M$ whose fiber at $x\in M$ is the vector space 
$$H^A_x=\{a_x\in A_x/{\mathcal A}^A(a_x)=0\} \mbox{ (respectively, $H_x=\{v_x\in T_xM/{\mathcal A}(v_x)=0\}$}).$$
Moreover, $H$ and $H^A$ are $G$-invariant vector bundles, that is, 
$$H^A_{\phi_g(x)}=\Phi_g(H_x^A)\mbox{ and } H_{\phi_g(x)}=T_x\phi_g(H_x), \;\;\; \forall g\in G.$$

Now, if $\mu\in {\mathfrak g}^*$, we consider the section $\alpha_\mu$ of $A^*$ given by 
$$\alpha_\mu(a_x)=\mu({\mathcal A}^A(a_x)),$$
with $x\in M$ and $a_x\in A_x$.
This section has the following properties: 
\begin{enumerate}
\item $\alpha_\mu(M)\subseteq J_{A^*}^{-1}(\mu)$. In fact, 
$$J_{A^*}(\alpha_\mu(x))(\xi)=\alpha_\mu(\psi(\xi)(x))=\mu({\mathcal A}^A(\psi(\xi)(x)))=\mu(\xi)$$
for all $x\in M$ and $\xi\in {\mathfrak g}.$ 
\item $\Phi_g^*\alpha_\mu=\alpha_{Coad^G_g\mu},$ for all $g\in G$, which is a consequence from the equivariance properties  of ${\mathcal A}^A.$ 
\end{enumerate}
Thus, since $G_\mu=G$ then we  deduce that $\alpha_\mu$ is $G$-invariant, i.e.
\begin{equation}\label{invarianza}
\Phi_g^*(\alpha_\mu)=\alpha_\mu.
\end{equation} 
So, in what follows we assume that there is a $G$-invariant $1$-section $\alpha_\mu\in \Gamma(A^*)$ of $A^*$ with values in $J_{A^*}^{-1}(\mu).$ 
Using (\ref{invarianza}), Proposition \ref{Poisson} and the fact that the flow of $\psi(\xi)$ is $\{\Phi_{exp(t\xi)}\}_{t\in {\Bbb R}}$, we obtain that  
\begin{equation}\label{LPSI}{\mathcal L}^A_{\psi(\xi)}\alpha_\mu=0.\end{equation}
On the other hand, 
$$i_{\psi(\xi)}\alpha_\mu=\mu({\mathcal A}^A(\psi(\xi))=\mu(\xi).$$

Then 
\begin{equation}\label{ipsi}
0={\mathcal L}_{\psi(\xi)}^A\alpha_\mu=i_{\psi(\xi)}d^A\alpha_\mu.
\end{equation}
Denote by $\beta_\mu=d^A\alpha_\mu$. From  (\ref{invarianza}) and since $\Phi_g:A\to A$ is a Lie algebroid morphism we deduce that  the $2$-section $\beta_\mu$ of $A^*$ is $G$-invariant. Moreover, it satisfies $i_{\psi(\xi)}\beta_\mu=0$ which implies that 
$$(\Phi^T_{(g,\xi)})^*\beta_\mu=\Phi_g^*\beta_\mu=\beta_\mu$$
for all $(g,\xi)\in G\times {\mathfrak g}\cong TG$.

Therefore,  there exists a unique $B_\mu\in \Gamma(\wedge^2A_0^*)$ with the following property: 
\begin{equation}\label{property}
\widetilde{\pi}^*B_\mu=\beta_\mu=d^A\alpha_\mu, 
\end{equation}
where $\widetilde{\pi}:A\to A_0$ is the corresponding projection. It is clear that  $d^{A_0}B_\mu=0$.

The $2$-section $B_\mu$ of $A_0^*$ is said to be the {\it magnetic term associated with }  $\alpha_\mu.$ 

Now, we will prove that there is a Lie algebroid isomorphism, $\Upsilon_{\alpha_\mu}:({\mathcal T}^AA^*)_\mu\to {\mathcal T}^{A_0}A_0^*$  between the reduced Lie algebroid $({\mathcal T}^AA^*)_\mu$ and ${\mathcal T}^{A_0}A_0^*$ such that   the symplectic-like section $\Omega_{A_0}$ on ${\mathcal T}^{A_0}A_0^*$ and the reduced symplectic-like section $\Omega_{\mu}$  on $({\mathcal T}^AA^*)_\mu$ are related by the following formula
$$\Upsilon_{\alpha_\mu}^*(\Omega_{A_0} - pr_1^*B_\mu)=\Omega_\mu,$$ 
where $pr_1:{\mathcal T}^{A_0}A_0^*\to A_0$ is the Lie algebroid morphism induced by the first projection. 

\medskip

{\it The description of  the Lie algebroid isomorphism $\Upsilon_{\alpha_\mu}:({\mathcal T}^AA^*)_\mu\to {\mathcal T}^{A_0}A_0^*.$ } Firstly,  we will describe a Lie algebroid morphism between the reduced spaces $({\mathcal T}^AA^*)_\mu$ and $({\mathcal T}^AA^*)_0$. Then, we may use Theorem \ref{p1} in order to construct the isomorphism $\Upsilon_{\alpha_\mu}$.

Using the fact that $\alpha_\mu(M)\subseteq J_{A^*}^{-1}(\mu),$  we deduce that  $J_{A^*}^{-1}(\mu)\to M$ is an affine bundle on $M$ such that
$$J_{A^*}^{-1}(\mu)\cap A_x^*=\{\beta_x\in A^*_x/\beta_x-\alpha_\mu(x)\in J_{A^*}^{-1}(0)\}$$
for all $x\in M.$

Now, we consider the affine bundle isomorphism

\begin{picture}(375,60)(60,40)
\put(200,20){\makebox(0,0){$M$}} \put(260,25){$Id$}
\put(215,20){\vector(1,0){90}} \put(320,20){\makebox(0,0){$M$}}
\put(185,50){$$} \put(200,70){\vector(0,-1){45}}
\put(320,50){$$} \put(320,70){\vector(0,-1){45}}
\put(190,80){\makebox(0,0){$J_{A^*}^{-1}(\mu)$}} \put(260,85){$sh_\mu$}
\put(215,80){\vector(1,0){90}} \put(325,80){\makebox(0,0){$J_{A^*}^{-1}(0)$}}
\end{picture}

\vspace{30pt}

where $sh_\mu(\beta_x)=\beta_x-\alpha_\mu(x),$ for all $\beta_x\in J^{-1}_{A^*}(\mu)\cap A_x^*.$ 

From the $G$-invariance of $\alpha_\mu$, we deduce that $sh_\mu$ is equivariant with respect to the action $\Phi^*:G\times A^*\to A^*$, i.e. 
$$(sh_\mu\circ \Phi_g^*)({\beta_x})=(\Phi_g^*\circ {sh_\mu)(\beta_x}),$$ for all $\beta_x\in A^*_x\cap J_{A^*}^{-1}(\mu)$ and $g\in G.$
Moreover,  one may induce a morphism of vector bundles

\begin{picture}(375,60)(60,40)
\put(190,15){\makebox(0,0){$J_{A^*}^{-1}(\mu)$}} \put(260,20){$sh_\mu$}
\put(215,15){\vector(1,0){90}} \put(325,15){\makebox(0,0){$J_{A^*}^{-1}(0)$}}
\put(155,50){$\tau_{{\mathcal T}^AJ_{A^*}^{-1}(\mu)}$} \put(200,70){\vector(0,-1){45}}
\put(325,50){$\tau_{{\mathcal T}^AJ_{A^*}^{-1}(0)}$} \put(320,70){\vector(0,-1){45}}
\put(190,80){\makebox(0,0){${\mathcal T}^AJ_{A^*}^{-1}(\mu)$}} \put(250,85){${\mathcal T}^Ash_\mu$}
\put(215,80){\vector(1,0){90}} \put(330,80){\makebox(0,0){${\mathcal T}^AJ_{A^*}^{-1}(0)$}}
\end{picture}

\vspace{40pt}

where ${\mathcal T}^Ash_\mu(a_x,X_{\beta_x})=(a_x,T_{\beta_x}sh_\mu(X_{\beta_x})),$  with $(a_x,X_{\beta_x})\in {{\mathcal T}^AJ_{A^*}^{-1}(\mu)}.$
Note that, since $$\tau_{*|J_{A^*}^{-1}(0)}\circ sh_\mu=\tau_{*|J_{A^*}^{-1}(\mu)},$$ then ${\mathcal T}^Ash_\mu$ is well-defined. Furthermore, a direct proof shows that ${\mathcal T}^Ash_\mu$ is an isomorphism of vector bundles.  In fact, one can easily see  that  ${\mathcal T}^Ash_\mu$ is a Lie algebroid isomorphism, taking into account that
$$
\begin{array}{rcl}
{\mathcal T}^Ash_\mu(\lcf X,Y\rcf,[U,V])&=&(\lcf X,Y\rcf,[Tsh_\mu\circ U\circ sh_\mu^{-1},Tsh_\mu\circ {V}\circ sh_\mu^{-1}])\circ sh_\mu,\\[8pt]
\rho_{{\mathcal T}^AJ_{A^*}^{-1}(0)}(({\mathcal T}^Ash_\mu)(X,U))&=&(T sh_\mu \circ \rho_{{\mathcal T}^AJ_{A^*}^{-1}(\mu)})(X,U)
\end{array}$$
for all $X,Y\in \Gamma(A)$ and $U,V\in {\mathfrak  X}(J_{A^*}^{-1}(\mu))$ which are $(\tau_*)_{|J_{A^*}^{-1}(\mu)}$-projectable on $\rho(X)$ and $\rho(Y),$ respectively. 

 Moreover, since $sh_\mu$ is equivariant, we  deduce that ${\mathcal T}^Ash_\mu$ is equivariant with respect to the action $(\Phi,T\Phi^*)^T$ of $G$ restricted to ${\mathcal T}^AJ_{A^*}^{-1}(\mu)$ and ${\mathcal T}^AJ_{A^*}^{-1}(0)$, respectively. 

Thus, one induces a Lie algebroid isomorphism

\begin{picture}(375,60)(60,40)
\put(190,15){\makebox(0,0){$J_{A^*}^{-1}(\mu)/G$}} \put(260,20){$\widetilde{sh}_\mu$}
\put(215,15){\vector(1,0){90}} \put(330,15){\makebox(0,0){$J_{A^*}^{-1}(0)/G$}}
\put(155,50){$\widetilde{\tau}_{{\mathcal T}^AJ_{A^*}^{-1}(\mu)}$} \put(200,70){\vector(0,-1){45}}
\put(325,50){$\widetilde{\tau}_{{\mathcal T}^AJ_{A^*}^{-1}(0)}$} \put(320,70){\vector(0,-1){45}}
\put(180,80){\makebox(0,0){$({\mathcal T}^AJ_{A^*}^{-1}(\mu))/TG$}} \put(250,85){$\ltilde{30}{{\mathcal T}^Ash_\mu}$}
\put(220,80){\vector(1,0){80}} \put(340,80){\makebox(0,0){$({\mathcal T}^AJ_{A^*}^{-1}(0))/TG$}}
\end{picture}

\vspace{30pt}

Finally, the isomorphism $\Upsilon_{\alpha_\mu}:({\mathcal T}^AA^*)_\mu\to {\mathcal T}^{A_0}A_0^*$ is defined as follows  
$$\Upsilon_{\alpha_\mu}=\bar\varphi^T\circ \ltilde{30}{{\mathcal T}^Ash_\mu}$$
where $\bar\varphi^T:({\mathcal T}^AA^*)_0\to {\mathcal T}^{A_0}A_0^*$ is the Lie algebroid isomorphism defined by (\ref{5.33'}). 

\medskip

{\it Relation between the symplectic-like structures on $({\mathcal T}^AA^*)_\mu$ and  ${\mathcal T}^{A_0}A_0^*$. } Let $\lambda_A$ be the Liouville section on ${\mathcal T}^AA^*$ and  $\iota_0:{\mathcal T}^A(J_{A^*})^{-1}(0)\to {\mathcal T}^AA^*$ (respectively, $\iota_\mu:{\mathcal T}^A(J_{A^*})^{-1}(\mu)\to {\mathcal T}^AA^*$) be the corresponding  inclusion. Then,
\begin{equation*}\label{parte1}
\begin{array}{rcl}
(({\mathcal T}^Ash_\mu)^*(\iota_0^*\lambda_A))(a_x,X_{\beta_x}) &=&(\iota_\mu^*\lambda_A)(a_x,X_{\beta_x})-\alpha_\mu(a_x),
\end{array}\end{equation*}

for all $\beta_x\in J_{A^*}^{-1}(\mu)$ and $(a_x,X_{\beta_x})\in {\mathcal T}^A_{\beta_x}J_{A^*}^{-1}(\mu).$

On the other hand, if $pr^0_1:{\mathcal T}^AJ_{A^*}^{-1}(0)\to A$ is the Lie algebroid morphism induced by the first projection, we have that
\begin{equation*}\label{parte2}
((pr_1^0\circ {\mathcal T}^Ash_\mu)^*\alpha_\mu)(a_x,X_{\beta_x})=\alpha_\mu(a_x).
\end{equation*}
This implies that 
$$({\mathcal T}^Ash_\mu)^*(\iota_0^*\lambda_A + (pr_1^0)^*\alpha_\mu)=\iota_\mu^*\lambda_A$$
and thus, from Theorem \ref{2.8co}, we deduce that 
\begin{equation}\label{5.41}
({\mathcal T}^Ash_\mu)^*(\widetilde{\pi}_0^*\Omega_0-(pr_1^0)^*\beta_\mu)=\widetilde{\pi}_\mu^*\Omega_\mu,
\end{equation}
where $\Omega_0$ (respectively, $\Omega_\mu$) is the symplectic-like structure on $({\mathcal T}^AJ_{A^*}^{-1}(0))/TG$ (respectively, \linebreak 
$({\mathcal T}^AJ_{A^*}^{-1}(\mu))/TG$) and $\widetilde{\pi}_0:{\mathcal T}^AJ_{A^*}^{-1}(0)\to ({\mathcal T}^AJ_{A^*}^{-1}(0))/TG$ (respectively, $\widetilde{\pi}_\mu:{\mathcal T}^AJ_{A^*}^{-1}(\mu)\to ({\mathcal T}^AJ_{A^*}^{-1}(\mu))/TG$) is the canonical projection.

Now, using the  relations
$$\widetilde{\pi}_0\circ {\mathcal T}^Ash_\mu=\ltilde{30}{{\mathcal T}^Ash_\mu}\circ \widetilde{\pi}_\mu\;\;\mbox{ and }\;\; \widetilde{\pi}\circ pr_1^0\circ {\mathcal T}^Ash_\mu=pr_1\circ \Upsilon_{\alpha_\mu}\circ \widetilde{\pi}_\mu,$$ 
and the facts 
$$(\bar\varphi^T)^*\Omega_{A_0}=\Omega_0\;\;\mbox{ and  } \widetilde{\pi}^*B_\mu=\beta_\mu,$$
we conclude that 
(\ref{5.41})  is equivalent to
$$\widetilde{\pi}_\mu^*(\Upsilon_{\alpha_\mu}^*\Omega_{A_0}-\Upsilon^*_{\alpha_\mu}(pr_1^*(B_\mu)))=\widetilde{\pi}_\mu^*\Omega_\mu.$$
Therefore, 
$$\Upsilon_{\alpha_\mu}^*(\Omega_{A_0}-pr_1^*(B_\mu))=\Omega_\mu.$$
The results obtained in this case may be summarized in the following theorem.

\begin{theorem}\label{p2}
Let $(A,\lcf\cdot,\cdot\rcf,\rho)$ be a Lie algebroid on the manifold $M$ and $\Phi:G\times A\to A$  a free and proper action of a connected Lie group $G$ by complete lifts. Suppose that we consider $\mu\in {\mathfrak g}^*$ such that $G=G_\mu$. Then, choosing any  $G$-invariant section $\alpha_\mu$ of $A^*$ with values in $J_{A^*}^{-1}(\mu)$, there is a canonical Lie algebroid isomorphism  
$$\Upsilon_{\alpha_\mu}:(({\mathcal T}^AA^*)_\mu,\Omega_\mu)\to ({\mathcal T}^{A_0}A_0^*, \Omega_{A_0} - (pr_1)^*B_\mu)$$
where $A_0$ is the vector bundle  
$$A_0=A/TG\to M/G$$
endowed with the Lie algebroid structure characterized by (\ref{A00}),   
$\Omega_{A_0}$ is the canonical symplectic-like structure on ${\mathcal T}^{A_0}A_0^*$, $pr_1:{\mathcal T}^{A_0}A_0^*\to A_0$ is the projection on the first factor and $B_\mu\in\Gamma(\wedge^2A^*_0)$ is the corresponding magnetic term associated with $\alpha_\mu$ which is characterized by (\ref{property}). \end{theorem}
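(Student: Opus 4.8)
The proof is essentially a matter of assembling the constructions carried out above and checking that the resulting map intertwines the symplectic-like sections up to the magnetic correction. The plan is to produce $\Upsilon_{\alpha_\mu}$ as a composite of two Lie algebroid isomorphisms: a ``shift'' isomorphism, obtained by subtracting the chosen $\alpha_\mu$, which identifies the reduction of ${\mathcal T}^AA^*$ at $\mu$ with the reduction at $0$, followed by the canonical isomorphism $\bar\varphi^T$ of Theorem \ref{p1} (see (\ref{5.33'})). I would first record that, since $G=G_\mu$, a $G$-invariant section $\alpha_\mu\in\Gamma(A^*)$ taking values in $J_{A^*}^{-1}(\mu)$ always exists — e.g. $\alpha_\mu(a_x)=\mu(\mathcal A^A(a_x))$ for a principal connection $\mathcal A$ on $\pi\colon M\to M/G$, whose $G$-invariance is exactly (\ref{invarianza}) — so the hypothesis is non-vacuous; below only the $G$-invariance of $\alpha_\mu$ and the inclusion $\alpha_\mu(M)\subseteq J_{A^*}^{-1}(\mu)$ are used.

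Next I would introduce the fibrewise affine bundle isomorphism $sh_\mu\colon J_{A^*}^{-1}(\mu)\to J_{A^*}^{-1}(0)$ over $\mathrm{id}_M$, $\beta_x\mapsto\beta_x-\alpha_\mu(x)$; it is well defined because $J_{A^*}^{-1}(\mu)\cap A_x^*$ is the $\alpha_\mu(x)$-translate of the annihilator $\psi_x({\mathfrak g})^0=J_{A^*}^{-1}(0)\cap A_x^*$, it intertwines the two projections to $M$, and by (\ref{invarianza}) it is $G$-equivariant for the induced actions of $\Phi^*$. Prolonging along $sh_\mu$ gives the bundle map ${\mathcal T}^Ash_\mu\colon{\mathcal T}^AJ_{A^*}^{-1}(\mu)\to{\mathcal T}^AJ_{A^*}^{-1}(0)$, $(a_x,X_{\beta_x})\mapsto(a_x,T_{\beta_x}sh_\mu(X_{\beta_x}))$; evaluating bracket and anchor on projectable sections, where both prolongation structures are given by their explicit formulas, shows ${\mathcal T}^Ash_\mu$ is a Lie algebroid isomorphism, and the $G$-equivariance of $sh_\mu$ (which covers $\mathrm{id}_M$) makes it equivariant for the restricted actions of the affine $TG$-action $(\Phi,T\Phi^*)^T$. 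Passing to the $TG$-quotients then produces a Lie algebroid isomorphism $\overline{{\mathcal T}^Ash_\mu}\colon({\mathcal T}^AA^*)_\mu\to({\mathcal T}^AA^*)_0$ over $\widetilde{sh}_\mu\colon J_{A^*}^{-1}(\mu)/G\to J_{A^*}^{-1}(0)/G$, and I set $\Upsilon_{\alpha_\mu}=\bar\varphi^T\circ\overline{{\mathcal T}^Ash_\mu}$, which is a Lie algebroid isomorphism onto ${\mathcal T}^{A_0}A_0^*$ by Theorem \ref{p1}.

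It remains to match the symplectic-like sections, which I would do at the level of Liouville sections. Writing $\lambda_A$ for the Liouville section (\ref{Liouville}), $\iota_\mu,\iota_0$ for the inclusions of ${\mathcal T}^AJ_{A^*}^{-1}(\mu)$ and ${\mathcal T}^AJ_{A^*}^{-1}(0)$ into ${\mathcal T}^AA^*$, and $pr_1^0$ for the first projection on ${\mathcal T}^AJ_{A^*}^{-1}(0)$, a direct computation from the definitions of $\lambda_A$ and $sh_\mu$ gives $({\mathcal T}^Ash_\mu)^*(\iota_0^*\lambda_A)=\iota_\mu^*\lambda_A-(pr_1^0\circ{\mathcal T}^Ash_\mu)^*\alpha_\mu$. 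Applying $-d$, using that ${\mathcal T}^Ash_\mu$, $\iota_0$, $\iota_\mu$ and $pr_1^0$ are Lie algebroid morphisms (so they commute with the differentials), and invoking Theorem \ref{2.8co} at $0$ and at $\mu$ (legitimate since $G_0=G_\mu=G$), I obtain the identity (\ref{5.41}): $({\mathcal T}^Ash_\mu)^*(\widetilde\pi_0^*\Omega_0-(pr_1^0)^*\beta_\mu)=\widetilde\pi_\mu^*\Omega_\mu$, where $\beta_\mu=d^A\alpha_\mu$. Finally I would push this identity down along the commuting squares $\widetilde\pi_0\circ{\mathcal T}^Ash_\mu=\overline{{\mathcal T}^Ash_\mu}\circ\widetilde\pi_\mu$ and $\widetilde\pi\circ pr_1^0\circ{\mathcal T}^Ash_\mu=pr_1\circ\Upsilon_{\alpha_\mu}\circ\widetilde\pi_\mu$, combining with $(\bar\varphi^T)^*\Omega_{A_0}=\Omega_0$ from Theorem \ref{p1} and $\widetilde\pi^*B_\mu=\beta_\mu$ from (\ref{property}); since $\widetilde\pi_\mu$ is a vector bundle epimorphism this converts (\ref{5.41}) into $\Upsilon_{\alpha_\mu}^*(\Omega_{A_0}-pr_1^*B_\mu)=\Omega_\mu$, as asserted. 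I expect the only genuinely delicate points to be checking that ${\mathcal T}^Ash_\mu$ is a morphism of Lie algebroids rather than merely of vector bundles, and the well-definedness of $B_\mu$ on the quotient $A_0$; both are dealt with by reducing everything to projectable sections, on which all the structures involved are given by explicit formulas.
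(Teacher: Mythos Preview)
Your proposal is correct and follows essentially the same approach as the paper: construct the shift $sh_\mu$ from $\alpha_\mu$, prolong it to a $TG$-equivariant Lie algebroid isomorphism ${\mathcal T}^Ash_\mu$, compose the induced quotient map with $\bar\varphi^T$ from Theorem \ref{p1}, and then match the symplectic-like sections by computing at the Liouville level to obtain (\ref{5.41}) and descend via the commuting squares. The only cosmetic difference is that the paper writes the Liouville identity in the equivalent form $({\mathcal T}^Ash_\mu)^*(\iota_0^*\lambda_A + (pr_1^0)^*\alpha_\mu)=\iota_\mu^*\lambda_A$, and your final remark about the two delicate verifications (that ${\mathcal T}^Ash_\mu$ is a genuine Lie algebroid morphism, and the well-definedness of $B_\mu$) pinpoints exactly the places where the paper spends its effort.
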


%%%%%%%%%%%%%%%%%%%%%%%%%%%%%%%%%%%%%%
\subsection{The general case}\label{prol3}
Let $(A,\lcf\cdot,\cdot\rcf, \rho)$ be a Lie algebroid on the manifold $M$ and $\Phi:G\times A\to A$ a free and proper  action of a connected Lie group $G$ on $A$ by complete lifts with respect to the Lie algebra anti-morphism $\psi:{\mathfrak g}\to \Gamma(A).$

Let $\mu\in {\mathfrak g}^*$ and denote by ${\mathfrak g}_\mu$ the isotropy algebra of $\mu.$ Then, the induced action  $\Phi: G_\mu\times A\to A$ is a free and proper action by complete lifts with respect to the restriction $\psi: {\mathfrak g}_\mu\to \Gamma(A)$ of $\psi$ to ${\mathfrak g}_\mu.$ 

Now, denote by 
$\bar{\mu}\in {\mathfrak g}_\mu^*$ the restriction of $\mu$ to ${\mathfrak g}_\mu$ and by  $J_{A^*}^\mu:A^*\to {\mathfrak g}_\mu^*$ the map given by 
$$J_{A^*}^\mu=i^*\circ J_{A^*},$$
where $i^*:{\mathfrak g}^*\to {\mathfrak g}_\mu^*$ is the dual of the inclusion $i:{\mathfrak g}_\mu\to {\mathfrak g}.$ Then, 
$J_{A^*}^\mu$ is  
 the momentum map associated with the action of $G_\mu$ on $A.$

A direct computation proves that the isotropy group of $\bar\mu\in {\mathfrak g}_\mu$ with respect to the coadjoint action of $G_\mu$, $(G_\mu)_{\bar\mu},$ is just $G_\mu.$ Therefore, we are in the conditions of Section \ref{prol2} if we choose as the starting Lie group $G_\mu$. Next, we choose  a $G_\mu$-invariant section $\alpha_{\mu}\in \Gamma(A^*)$ such that
$$\alpha_{\mu}(M)\subset (J_{A^*}^\mu)^{-1}(\bar\mu).$$

This is always possible as we have shown in Section \ref{prol2}. 
If $A_{0,\mu}$ is the vector bundle $A/TG_\mu\to M/G_\mu$ associated with the action $\Phi^T:TG_\mu\times A\to A,$ we denote by $B_{\mu}\in \Gamma(\wedge^2A_{0,\mu}^*)$ the corresponding magnetic term associated with $\alpha_{\mu}$. Then, from Theorem \ref{p2}, we conclude that the reduced symplectic-like Lie algebroid 
$$({\mathcal T}^AA^*)_{\bar\mu}=({\mathcal T}^A(J_{A^*}^{\mu})^{-1}(\bar\mu))/TG_\mu\to J^{-1}_{A^*}(\mu)/G_\mu$$
is isomorphic to the symplectic-like Lie algebroid $({\mathcal T}^{A_{0,\mu}}A_{0,\mu}^*, \Omega_{A_{0,\mu}}-pr_1^*(B_{{\mu}})),$
where $\Omega_{A_{0,\mu}}$ is the canonical symplectic-like structure on ${\mathcal T}^{A_{0,\mu}}A_{0,\mu}^*$ and $pr_1:{\mathcal T}^{A_{0,\mu}}A_{0,\mu}^*\to A_{0,\mu}$ is the projection on the first factor. 

On the other hand, the inclusion $i_{\mu,\bar{\mu}}:J_{A^*}^{-1}(\mu)\to (J^\mu_{A^*})^{-1}(\bar\mu)$ is $G_\mu$-invariant and induces a Lie algebroid  $TG_\mu$-invariant monomorphism $I:{\mathcal T}^AJ_{A^*}^{-1}(\mu)\to {\mathcal T}^A(J^\mu_{A^*})^{-1}(\bar\mu)$ over $i_{\mu,\bar{\mu}}$. Therefore, we have a Lie algebroid monomorphism $(\widetilde{I},\widetilde{i}_{\mu,\bar{\mu}})$ 

\begin{picture}(375,60)(60,40)
\put(190,15){\makebox(0,0){$J_{A^*}^{-1}(\mu)/G_\mu$}} \put(260,20){$\widetilde{i}_{\mu,\bar{\mu}}$}
\put(215,15){\vector(1,0){90}} \put(350,15){\makebox(0,0){$(J^\mu_{A^*})^{-1}(\bar{\mu})/G_\mu$}}
\put(155,50){$\widetilde{\tau}_{{\mathcal T}^AJ_{A^*}^{-1}(\mu)}$} \put(200,70){\vector(0,-1){45}}
\put(325,50){$\widetilde{\tau}_{{\mathcal T}^A(J_{A^*}^\mu)^{-1}(\bar{\mu})}$} \put(320,70){\vector(0,-1){45}}
\put(165,80){\makebox(0,0){$({\mathcal T}^AJ_{A^*}^{-1}(\mu))/TG_\mu$}} \put(260,85){$\widetilde{I}$}
\put(215,80){\vector(1,0){90}} \put(360,80){\makebox(0,0){$({\mathcal T}^A(J^\mu_{A^*})^{-1}(\bar\mu))/TG_\mu$}}
\end{picture}

\vspace{35pt}

which is canonical with respect to  $\Omega_\mu$  and  $\Omega_{\bar\mu}$ on the reduced spaces $({\mathcal T}^A(J_{A^*})^{-1}(\bar\mu))/TG_\mu$ and $({\mathcal T}^A(J^\mu_{A^*})^{-1}(\bar\mu))/TG_\mu,$ respectively.

Denote by $\tilde{\iota}_{\bar\mu}:{\mathcal T}^A(J_{A^*}^\mu)^{-1}(\bar\mu)\to {\mathcal T}^AA^*$ and by $\tilde{\iota}_\mu:{\mathcal T}^A(J_{A^*})^{-1}(\mu)\to {\mathcal T}^AA^*$ the corresponding inclusions which are related by
$$\tilde{\iota}_\mu=\tilde{\iota}_{\bar \mu}\circ {I}.$$

Now, if 

$$\begin{array}{rcl}\pi_{\bar\mu}: {\mathcal T}^A(J_{A^*}^\mu)^{-1}(\bar\mu)&\to& ({\mathcal T}^A(J_{A^*}^\mu)^{-1}(\bar\mu))/TG_\mu\\[8pt]
\pi_\mu: {\mathcal T}^A(J_{A^*})^{-1}(\mu)&\to &({\mathcal T}^A(J_{A^*})^{-1}(\mu))/TG_\mu\end{array}$$ are the corresponding projections, we have that
$$\pi_\mu^*\Omega_\mu=\tilde{\iota}_\mu^*\Omega_A ={I}^*(\tilde{\iota}_{\bar{\mu}}^*\Omega_A)={I}^*(\pi_{\bar{\mu}}^*\Omega_{\bar \mu}).$$
Then, using that $\pi_{\bar\mu}\circ {I}=\widetilde{I}\circ \pi_\mu$ we conclude that
$$\widetilde{I}^*\Omega_{\bar\mu}=\Omega_\mu.$$

Therefore, $\widetilde{I}$ is a canonical Lie algebroid monomorphism. Thus, we have proved the  main result  of this section.

\begin{theorem}\label{t5.3}
Let $(A,\lcf\cdot,\cdot\rcf,\rho)$ be a Lie algebroid over the manifold $M$ and $\Phi:G\times A\to A$  a free and proper action of a connected Lie group by complete lifts. If  $\mu\in {\mathfrak g}^*$  and $\bar\mu$ is the restriction of $\mu$ to ${\mathfrak g}_\mu$, then, choosing a  $G_\mu$-invariant section $\alpha_\mu$ of $A^*$ with values in $(J^\mu_{A^*})^{-1}(\bar\mu)$, there exists a canonical embedding
$$({\mathcal T}^AA^*)_\mu\to  {\mathcal T}^{A_{0,\mu}}A_{0,\mu}^*$$
 from  the reduced  algebroid $({\mathcal T}^AA^*)_\mu$ equipped with the canonical reduced symplectic-like structure $\Omega_\mu$ to  the Lie algebroid ${\mathcal T}^{A_{0,\mu}}A_{0,\mu}^*$ endowed with the symplectic-like structure $$\bar\Omega_{\mu}=\Omega_{A_{0,\mu}} - (pr_1)^*B_\mu.$$
  Moreover, this embedding is an isomorphism if and only if ${\mathfrak g}={\mathfrak g}_\mu.$
  
 Here  $A_{0,\mu}$ is the vector bundle  
$$A_{0,\mu}=A/TG_\mu\to M/G_\mu,$$
$\Omega_{A_{0,\mu}}$ is the canonical symplectic-like structure on ${\mathcal T}^{A_{0,\mu}}A_{0,\mu}^*$, $pr_1:{\mathcal T}^{A_{0,\mu}}A_{0,\mu}^*\to A_{0,\mu}$ is the projection on the first factor and $B_\mu\in \Gamma(\wedge^2A_0^*)$ is  the corresponding magnetic term associated with $\alpha_\mu$ which is characterized by (\ref{property}). \end{theorem}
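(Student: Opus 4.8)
The plan is to deduce the general case from the already-settled case $G_\mu=G$ of Theorem~\ref{p2}, applied not to $G$ itself but to the isotropy subgroup $G_\mu$, and then to relate the two reductions of ${\mathcal T}^AA^*$ --- the one by the full group $G$ at the value $\mu$ and the one by $G_\mu$ at the restricted value $\bar\mu$ --- through a natural inclusion of momentum fibers.

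First I would use the preliminary observations set up just before the statement: the restricted action $\Phi:G_\mu\times A\to A$ is free, proper and by complete lifts with respect to $\psi|_{{\mathfrak g}_\mu}$, the map $J_{A^*}^\mu=i^*\circ J_{A^*}$ is an equivariant momentum map for it, and the $G_\mu$-isotropy of $\bar\mu=\mu|_{{\mathfrak g}_\mu}$ for the coadjoint action equals all of $G_\mu$. Hence Theorem~\ref{p2}, with $G_\mu$ in the role of $G$, applies to the chosen $G_\mu$-invariant section $\alpha_\mu$ with values in $(J_{A^*}^\mu)^{-1}(\bar\mu)$ and produces a canonical Lie algebroid isomorphism $\Upsilon_{\alpha_\mu}:(({\mathcal T}^AA^*)_{\bar\mu},\Omega_{\bar\mu})\to({\mathcal T}^{A_{0,\mu}}A_{0,\mu}^*,\Omega_{A_{0,\mu}}-pr_1^*B_\mu)$, where $({\mathcal T}^AA^*)_{\bar\mu}=({\mathcal T}^A(J_{A^*}^\mu)^{-1}(\bar\mu))/TG_\mu$, $A_{0,\mu}=A/TG_\mu$, and $B_\mu\in\Gamma(\wedge^2A_{0,\mu}^*)$ is the magnetic term attached to $\alpha_\mu$ via $\widetilde\pi^*B_\mu=d^A\alpha_\mu$. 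This identifies the target.

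Second, I would construct and analyze the comparison between $({\mathcal T}^AA^*)_\mu$ and $({\mathcal T}^AA^*)_{\bar\mu}$. Since $J_{A^*}(\beta)=\mu$ forces $J_{A^*}^\mu(\beta)=i^*\mu=\bar\mu$, there is a $G_\mu$-equivariant inclusion $i_{\mu,\bar\mu}:J_{A^*}^{-1}(\mu)\hookrightarrow (J_{A^*}^\mu)^{-1}(\bar\mu)$ of affine subbundles of $A^*$ over $M$, which lifts to a $TG_\mu$-equivariant Lie algebroid monomorphism $I:{\mathcal T}^AJ_{A^*}^{-1}(\mu)\to{\mathcal T}^A(J_{A^*}^\mu)^{-1}(\bar\mu)$, $(a_x,v)\mapsto(a_x,T i_{\mu,\bar\mu}(v))$; functoriality of the prolongation construction over the two affine bundle projections makes this a Lie algebroid morphism, and injectivity on fibers is immediate. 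Passing to the (free, proper) $TG_\mu$-quotients gives a Lie algebroid monomorphism $\widetilde I:({\mathcal T}^AA^*)_\mu\to({\mathcal T}^AA^*)_{\bar\mu}$. To see $\widetilde I$ is canonical I would combine $\widetilde\iota_\mu=\widetilde\iota_{\bar\mu}\circ I$ (the ambient inclusions into ${\mathcal T}^AA^*$ compose) with the defining relations $\pi_\mu^*\Omega_\mu=\widetilde\iota_\mu^*\Omega_A$ and $\pi_{\bar\mu}^*\Omega_{\bar\mu}=\widetilde\iota_{\bar\mu}^*\Omega_A$ from Theorem~\ref{2.8co} and the commuting square $\pi_{\bar\mu}\circ I=\widetilde I\circ\pi_\mu$, so that $\pi_\mu^*(\widetilde I^*\Omega_{\bar\mu})=I^*(\pi_{\bar\mu}^*\Omega_{\bar\mu})=(\widetilde\iota_{\bar\mu}\circ I)^*\Omega_A=\widetilde\iota_\mu^*\Omega_A=\pi_\mu^*\Omega_\mu$; since $\pi_\mu$ is a vector bundle epimorphism, $\pi_\mu^*$ is injective and $\widetilde I^*\Omega_{\bar\mu}=\Omega_\mu$ follows.

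The sought embedding is then $\Upsilon_{\alpha_\mu}\circ\widetilde I:(({\mathcal T}^AA^*)_\mu,\Omega_\mu)\to({\mathcal T}^{A_{0,\mu}}A_{0,\mu}^*,\Omega_{A_{0,\mu}}-pr_1^*B_\mu)$, a canonical Lie algebroid monomorphism by the two preceding paragraphs. Finally, since $\Upsilon_{\alpha_\mu}$ is always an isomorphism, $\Upsilon_{\alpha_\mu}\circ\widetilde I$ is an isomorphism precisely when $\widetilde I$ is, equivalently when $I$ is, equivalently when $i_{\mu,\bar\mu}$ is onto; and since $J_{A^*}^{-1}(\mu)$ and $(J_{A^*}^\mu)^{-1}(\bar\mu)$ are affine subbundles of $A^*$ over $M$ of respective ranks $n-\dim{\mathfrak g}$ and $n-\dim{\mathfrak g}_\mu$ with the first contained in the second, the inclusion is onto exactly when ${\mathfrak g}={\mathfrak g}_\mu$. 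The only genuinely delicate point is the bookkeeping between the two distinct reduction data (full group at $\mu$ versus isotropy group at $\bar\mu$), namely checking that $i_{\mu,\bar\mu}$, $I$, $\widetilde I$ are well defined, equivariant, and compatible with the reduced symplectic-like sections; everything else is a straightforward assembly of Theorems~\ref{p2}, \ref{2.8co} and the material already developed in this section.
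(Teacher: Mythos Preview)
Your proposal is correct and follows essentially the same route as the paper: apply Theorem~\ref{p2} to the $G_\mu$-action at the value $\bar\mu$ to identify $({\mathcal T}^AA^*)_{\bar\mu}$ with the magnetically twisted ${\mathcal T}^{A_{0,\mu}}A_{0,\mu}^*$, then build the $TG_\mu$-equivariant monomorphism $I$ over the inclusion $i_{\mu,\bar\mu}$ of momentum fibers, pass to quotients, and verify $\widetilde I^*\Omega_{\bar\mu}=\Omega_\mu$ via $\widetilde\iota_\mu=\widetilde\iota_{\bar\mu}\circ I$ and $\pi_{\bar\mu}\circ I=\widetilde I\circ\pi_\mu$. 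Your rank comparison of the affine subbundles for the ``if and only if'' clause is in fact more explicit than what the paper writes.
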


\begin{examples} { {\rm $(i)$ If we apply the previous theorem to the particular case when $A$ is the standard Lie algebroid $TM\to M$ then we recover a classical result in cotangent bundle reduction theory (see \cite{AbMa87,Ku81}).  

\medskip 

$(ii)$ For  the case $A={\mathfrak g}\times TM$ from $(ii)$ in Examples \ref{examples}, we have seen  that the vector bundle ${\mathcal T}^AA^*\to A^*$  can be identified with 
${\mathfrak g}\times T({\mathfrak g}^*\times T^*M)\to {\mathfrak g}^*\times T^*M$ (see Example \ref{4.1}). Moreover, the Lie algebroid $({\mathfrak g}\times TM)/TG\to M/G$ is isomorphic to the  Atiyah algebroid associated with the principal bundle $\pi_M:M\to M/G$ (see (\ref{F})). 

\medskip

Then, the reduced symplectic-like Lie algebroid  $({\mathcal T}^AA^*)_0,$ for the value $\mu=0\in {\mathfrak g}^*$, is simplectically isomorphic to the canonical cover of  the fiberwise linear Poisson structure of $(T^*M)/G$ induced by the Atiyah Lie algebroid  $(TM)/G\to M/G,$  i.e. ${\mathcal T}^{(TM)/G}{((T^*M)/G)}\to (T^*M)/G.$  In fact, this last Lie algebroid is just the Atiyah algebroid associated with the principal bundle $\pi_{T^*M}: T^*M\to (T^*M)/G$ (see \cite{LMM}) and its symplectic-like structure $\Omega_{(T^*M)/G}\in \Gamma(\wedge^2(T^*(T^*M)/G))$ is the one induced by the $G$-invariant symplectic structure on $T^*M.$    

\medskip

Now, we  choose  $\mu \in {\mathfrak g}^*$ such that $G=G_\mu$ and  a $G$-invariant $1$-form $\alpha_\mu\in \Omega^1(M)$ on $M$ such that $\alpha_\mu(M)\subset J^{-1}(\mu),$ where $J:T^*M\to  {\mathfrak g}^*$ is the momentum map given as in (\ref{cot momentum}). Then,  the reduced symplectic-like Lie algebroid $({\mathcal T}^AA^*)_\mu$  is simplectically isomorphic to  the Atiyah algebroid associated with the principal bundle $\pi_{T^*M}: T^*M\to (T^*M)/G$ endowed with the  
 symplectic-like structure 
 $$\Omega_{(T^*M)/G} - \gamma_\mu$$
 
where $\gamma_\mu\in \Gamma(\wedge^2(T^*M/G))$ is the $2$-section obtained from a magnetic term defined as follows.  
We consider  the epimorphism $$\widetilde{\pi}:{\mathfrak g}\times TM\to TM/G,\;\;\; \tilde\pi(\xi,v_x)=[v_x+\xi_M(x)].$$
Then, we have that there exists $B_\mu\in \Gamma(\wedge^2 ((T^*M)/G))$ such that $$\widetilde{\pi}^*(B_\mu)=(0,d\alpha_\mu).$$ Finally, $\gamma_\mu$ is just 
$$(T\tau_{T^*Q}/G)^*B_\mu,$$
where $T\tau_{T^*Q}/G:(T(T^*M))/G\to (TM)/G$ is the vector bundle induced by the equivariant tangent lift  
$T\tau_{T^*M}:T(T^*M)\to TM$ of $\tau_{T^*M}:T^*M\to M.$

}}\end{examples}

We finish  this paper with an application  which is related with the reduction of non-autonomous Hamiltonian systems. 

\begin{example}\label{p3}{\rm 
Let $p:M\to {\R}$ be a fibration. We denote by $\tau_{Vp}:Vp\to M$ the vertical bundle associated with $p$. Note that the sections of this vector bundle may be identified with the vector fields $X$ on $M$ such that $\eta(X)=0$, where $\eta$ is the exact $1$-form $p^*(dt)$ on $M$, $t$ being the standard coordinate on ${\R}.$ 

This vector bundle admits, in a natural way, a Lie algebroid structure where the Lie bracket is the standard Lie bracket of vector fields and the anchor map is the inclusion of vertical vectors with  respect to $p$ into $TM$. 

Now, suppose that  we additionally have a free and proper action $\phi:G\times M\to M$ of a Lie group $G$ on $M$ which is fibered, i.e.
$$p\circ \phi_g=p, \mbox{ for all } g\in G.$$ 

Then:
\begin{enumerate}
\item The infinitesimal generators of this last action  are vertical vector fields. 
\item The tangent lifted action $T\phi:G\times TM\to TM$ induces a free and proper action $$\Phi:G\times Vp\to Vp$$ of $G$  on the vertical vector bundle $Vp$ of $p.$ 
\item $p$ induces a new fibration $\widetilde{p}:M/G\to {\R}$ on the quotient manifold $M/G.$
\end{enumerate}
Then, $\Phi:G\times Vp\to Vp$ is an action by complete lifts with respect to the Lie algebra anti-morphism
$$\psi: {\mathfrak g}\to Vp, \;\;\; \psi(\xi)=\xi_{M}.$$

Let $\mu$ be an element of ${\mathfrak g}^*$ and we denote by  $J_{V^*p}:V^*p\to {\mathfrak g}^*$  the momentum map defined as in (\ref{A*}). Then we have that 
the vector bundle   ${\mathcal T}^{Vp}(J_{V^*p}^{-1}(\mu))\to J_{V^*p}^{-1}(\mu)$ may be identified in a natural way with the vertical bundle $Vp_\mu\to J_{V^*p}^{-1}(\mu)$, where $p_\mu: {J_{V^*p}^{-1}(\mu)}\to \R$ is the fibration given by
$$p_\mu=p\circ \tau_{J_{V^*p}^{-1}(\mu)},$$ 
$\tau_{J_{V^*p}^{-1}(\mu)}: {J_{V^*p}^{-1}(\mu)}\to M$ being the corresponding projection. 
Under this identification  the action $(\Phi, T\Phi^*):G_\mu\times  Vp_\mu\to Vp_\mu$ given by (\ref{ptp}) is described  as follows. Consider the tangent lift $T\Phi^*:G_\mu\times T(J_{V^*p}^{-1}(\mu))\to T(J_{V^*p}^{-1}(\mu))$ of the restricted dual action $\Phi^*:G_\mu\times J^{-1}_{V^*p}(\mu)\to J^{-1}_{V^*p}(\mu)$. Since 
$$p_\mu\circ \Phi^*=p_\mu,$$  we may induce an action of $G_\mu$ on $Vp_\mu$ which is just $(\Phi,T\Phi^*).$ 
Therefore,  the action $(\Phi, T\Phi^*)^T:TG_\mu\times Vp_\mu\to Vp_\mu$ is given by 

$$(\Phi,T\Phi^*)^T((g,\xi), v_{\alpha_x})=T_{\alpha_x}\Phi^*_g(v_{\alpha_x}+ \xi_M^{*c}(\alpha_x)),$$
for all $ (g,\xi)\in G_\mu\times {\mathfrak g}_\mu\cong TG_\mu\mbox{ and } \alpha_x\in V^*_xp.$ Here $\xi_M^{*c}\in {\mathfrak X}(V^*p)$ is the complete lift   to $V^*p$ of the  infinitesimal generator of $\xi$ with respect to the action $\phi.$ 

Finally, from Theorem \ref{2.8co}, we conclude that the reduced vector bundle 
$$(Vp_\mu)/TG_\mu\to J_{V^*p}^{-1}(\mu)/G_\mu$$
is a symplectic-like Lie algebroid. 

If $\mu=0$, then  using Theorem \ref{p1}, we have that this symplectic-like Lie algebroid is isomorphic to ${\mathcal T}^{A_0}A_0^*$ 
where $A_0$ is the quotient vector bundle over $M/G$ with total space $Vp/TG.$ We remark that the action of $TG\cong G\times {\mathfrak g}$ on $Vp$ is given by 
$$\Phi^T((g,\xi), v_x)=T_x\phi_g(v_x+\xi_{M}(x)),$$
for $(g,\xi)\in G\times {\mathfrak g}$ and $v_x\in V_xp.$
In fact, the vector bundle $A_0$ is isomorphic to the vertical bundle $V\widetilde{p}$ with respect to the fibration $\widetilde{p}:M/G\to {\R}$. The isomorphism is just 
$$Vp/TG\to V\widetilde{p},\;\;\; [v_x]\mapsto T_x\pi(v_x)$$
where $\pi:M\to M/G$ is the canonical projection. Therefore, ${\mathcal T}^{A_0}A_0^*$ may be identified with the vertical bundle $V\bar{p}\to V^*\widetilde{p}$ with 
$${\bar{p}}=\widetilde{p}\circ\tau_{V^*\widetilde{p}}: V^*\widetilde{p}\to \R,$$
 where $\tau_{V^*\widetilde{p}}: V^*\widetilde{p}\to M/G$ is the corresponding vector bundle projection. In conclusion,  the reduced Lie algebroid 
$(Vp_0)/TG\to J_{V^*p}^{-1}(0)/G$ is canonically isomorphic to the Lie algebroid $V\bar{p}$ on $V^*\widetilde{p}$  with its  standard symplectic-like structure. 

\medskip

Now, we consider $\mu\in {\mathfrak g}^*$ such that $G_\mu=G$. Let $\alpha_\mu$ be a $G$-invariant $1$-form on $M$ such that
$$\alpha_\mu(\xi_M)=\mu(\xi), \mbox{ for all } \xi\in {\mathfrak g}.$$

Then, the restriction $\alpha_{\mu|Vp}$ of $\alpha_\mu$ to the vertical bundle $Vp$ of the fibration $p:M\to \R$ determines a $G$-invariant section of $V^*p$ with values in $J_{V^*p}^{-1}(\mu).$

Let $\beta_\mu=d^{Vp}(\alpha_{\mu|Vp})$. Equivalently, $\beta_\mu$ is the restriction of $d\alpha_\mu\in \Omega^2(M)$ to $Vp\times Vp$. 
The magnetic term $B_\mu$  associated with $\alpha_\mu$ is the restriction to $V\widetilde{p}\times V\widetilde{p}$ of the unique  $2$-form $\bar{B}_\mu$ of $M/G$ such that 
$${\pi}^*\bar B_\mu=d\alpha_\mu$$
where $\pi:M\to M/G$ is the quotient projection.  Moreover, using Theorem  \ref{p2}, we have that $(Vp_\mu)/TG$  
is a symplectic-like Lie algebroid on $J_{V^*p}^{-1}(\mu)/G$ isomorphic to the Lie algebroid $V\bar{p}$ endowed with the symplectic-like section 
$$\Omega_{V\widetilde{p}}-pr_1^*B_\mu\in \Gamma(\wedge^2 V^*\bar{p})$$
with $pr_1:V\bar{p}\to V\widetilde{p}$  the vector bundle morphism on $\tau_{V^*\widetilde{p}}: V^*\widetilde{p}\to M/G$  given by the restriction to $V\bar{p}$  of the tangent lift $T\tau_{V^*\widetilde{p}}:T(V^*\widetilde{p})\to T(M/G)$.
}
\end{example}
\section{ Conclusions and future work}
In this paper we have proved a reduction theorem for Lie algebroids
with respect to a Lie group action by complete lifts. This result
allows to obtain a Lie algebroid version of the classic
Marsden-Weinstein reduction theorem for symplectic manifolds. We
remark that as for the usual Marsden-Weinstein reduction theorem,
the presence of the Lie group is superfluous, and the infinitesimal
action of its Lie algebra is sufficient, although we have chosen not
to use this approach here.

\medskip

Additionally, in this paper the Marsden-Weinstein reduction process
for symplectic-like Lie algebroids is applied to the particular case
of the canonical cover of a fiberwise Poisson structure. It would be
interesting to also obtain an analog version to the ``bundle" or
``fibrating" picture of cotangent bundle reduction in the setup of
symplectic-like Lie algebroids, but this will be studied elsewhere.

\medskip

It is also worth noticing that  the classical Marsden-Weinstein
reduction scheme does not only explain how to obtain a reduced
symplectic structure on a quotient manifold, but it also shows that
the reduced dynamics of a symmetric Hamiltonian function is again
Hamiltonian with respect to this reduced symplectic structure. It is
easy to prove that a similar phenomenon occurs for the reduction of
symplectic-like algebroids by complete actions. In fact, under the
same hypotheses as in Theorem  \ref{2.8co}, if $H:M\to\R$  is a
$G$-invariant Hamiltonian function,  then one can prove that the
restriction to $J^{-1}(\mu)$ of $H$ is a $G_\mu$-invariant function,
and thus, one can induce  a real smooth function ${H}_\mu$ on
$J^{-1}(\mu)/G_\mu.$ Moreover, the restriction of the Hamiltonian
section ${\mathcal H}_{H}^{\Omega}$ to $J^{-1}(\mu)$   is a section
of the Lie algebroid $(J^{T})^{-1}(0,\mu)\to J^{-1}(\mu)$ which is
$(\widetilde{\pi}_\mu,\pi_\mu)$-projectable on the Hamiltonian
section ${\mathcal H}_{{H}_\mu}^{\Omega_\mu}.$ Thus, if $\gamma:I\to
M$ is a solution of Hamilton's equations for $H$ on the
symplectic-like Lie algebroid $A\to M$ passing through a point in
$J^{-1}(\mu)$, then the curve $\gamma$ is contained in $J^{-1}(\mu)$
and $\pi_\mu\circ \gamma:I\to J^{-1}(\mu)/G_\mu$ is a solution of
Hamilton's equations for $H_\mu$ on the reduced symplectic-like Lie
algebroid $A_\mu \to J^{-1}(\mu)/G_\mu.$

In view of the results of this paper, one could apply this process
to the reduction of symmetric Hamiltonian systems on Poisson
manifolds. We have postponed this study for a future work.

 \end{document}